\theoremstyle{plain}
\newtheorem{theorem}{Theorem}
\newtheorem{lemma}{Lemma}
\newtheorem{corollary}{Corollary}
\newtheorem{proposition}{Proposition}
\theoremstyle{proof}
\theoremstyle{definition}
\newtheorem{remark}{Remark}
\numberwithin{lemma}{section}
\numberwithin{equation}{section}
\numberwithin{theorem}{section}
\numberwithin{remark}{section}
\numberwithin{proposition}{section}
\begin{document}
\title{Quadratic reciprocity and Some ``non-differentiable" functions}
\author{Kalyan Chakraborty}
\address{Harish-Chandra Research Institute, HBNI,
Chhatnag Road, Jhunsi, Allahabad 211 019, India}
\email{kalyan@hri.res.in}
\author{Azizul Hoque}
\address{Harish-Chandra Research Institute, HBNI, Chhatnag Road, Jhunsi, Allahabad 211 019, India}
\email{azizulhoque@hri.res.in}
\date{}
\subjclass[2010]{Primary: 11A15, Secondary: 11F27}
\keywords{Quadratic reciprocity, theta-transformation, non-
-differentiable function}
\maketitle
\begin{abstract}
Riemann's non-differentiable function and Gauss's quadratic reciprocity law have attracted the attention of many researchers. 
In \cite{RM} Murty and Pacelli gave an  instructive proof of the quadratic reciprocity via the theta-transformation formula and Gerver \cite{G1} was the first to give a proof of differentiability/non-differentiability of Riemnan's function. The aim here is to survey some of the work done in these two questions and concentrates more onto a recent work of the first  author along with Kanemitsu and Li \cite{K1}.
In that work \cite{K1} an integrated form of the theta function was utilised and the advantage of that is that while the theta-function $\Theta(\tau)$ is a dweller in the upper-half plane, its integrated form $F(z)$ is a dweller in the extended upper half-plane including the real line, thus making it possible to consider the behaviour under the increment of the real variable, where the integration is along the horizontal line.
\end{abstract}

\section{Introduction}
In the early part of the 19th century  many mathematicians believed that a continuous function has derivative in a reasonably large set.  A. M. Amp\'{e}re in his paper in 1806  tried to give a theoretical justification for this based of course on the knowledge at that time. In a presentation before the Berlin Academy on July 18 on 1872, K. Weierstrass kind of shocked the mathematical community by proving this assertion to be false! He presented a function which was everywhere continuous but differentiable nowhere. We will talk about  this function of Weierstrass in the later sections in some details. This example was first published by du Bois-Remond in 1875. Weierstrass also mentioned Riemann, who apparently had used a similar construction (without proof though!) in his own 1861 lectures. However it seems like neither Weierstrass nor Riemann was first to get such examples. The earliest known example is due to B. Bolzano, who in the year 1830 exhibited (published in the year 1922 after being discovered a few years earlier) a continuous nowhere differentiable function. Around 1860 the Swiss mathematician, C. Cell\'{e}rier discovered such a function but unfortunately it was not published then and could be published only in 1890 after his death. To know more about the interesting history and details about such functions, the reader is referred to the excellent Master's thesis of J. Thim \cite{JT}.

Riemann, as mentioned in the earlier paragraph, opined that the function,
$$
f(x) = \sum_{n=1}^{\infty} \frac{\sin n^2x}{n^2}
$$
is nowhere differentiable. K. Weierstrass (in 1872) tried to prove this assertion, but couldn't  resolve it. He could construct another example of a continuous nowhere differentiable function 
$$
\sum_{n=0}^{\infty} \cos (b^n\pi x)
$$
where $0<a<1$ and $b$ is a positive integer such that
$$
ab > 1 + 3/2 \pi.
$$
G.H. Hardy \cite{H} showed that Weierstrass function has no derivative at points of the form $\xi \pi$ with $\xi$ 
is either irrational or rational of the form $2A/(4B + 1)$ or $(2A +1)/(2B +2)$. Much later in 1970,  J. Gerver  \cite{G1} disproved Riemann's assertion by proving that his function is differentiable at points of the form $\xi \pi$ where $\xi$ is of the form $(2A +1)/(2B +1)$, with derivative $-1/2$.  Arthur, a  few years later in 1972,used Poisson's summation formula and properties of Gauss sums to deduce Gerver's result and thus established a link between Riemann's function and quadratic reciprocity (via Gauss sums). Interested reader can also look into two excellent expositions of Riemann's function by E. Neuenschwander  \cite{Er} and that of S. L. Segal  \cite{Sl} for further enhancement in knowledge regarding this problem.
This problem was explored by many other authors and among them a few references could be \cite{H, Ka, G2, G3} and \cite{It}.

In an interesting work in \cite{K1}, the authors observed that Riemann's function $f(x)$ is really an integrated form of the classical $\theta$- function. Then they make the link to quadratic reciprocity from an exposition of M. R. Murty and A. Pacelli \cite{RM},  who (following Hecke)  showed that the transformation law for the theta function can be used to derive the law of quadratic reciprocity. The goal of \cite{K1} was to combine these two ideas and derive both, the differentiability of $f(x)$ at certain points and the law of quadratic reciprocity at one go.

An identity of Davenport and Chowla arose our interest in Riemann's function. The identity is:
\begin{equation}\label{eqdavenport:2.14}
\sum_{n=1}^{\infty}\frac{\lambda(n)}{n}\psi(nx)=-\frac{1}{\pi}\sum_{n=1}^{\infty}\frac{\sin
2\pi n^2x}{n^2}.
\end{equation}
The notations are standard, i.e.
\begin{equation*}
\lambda(n)=(-1)^{\Omega(n)}
\end{equation*}
with $\Omega(n)$ denotes the total number of distinct prime factors of $n$. Also,
$$
\psi(x)=-\frac{1}{\pi}\sum_{n=1}^{\infty}\frac{\sin 2\pi nx}{n}
$$
 is the saw-tooth Fourier series, i.e. it is the Fourier series expansion of the `saw-tooth' function:
 $$
 f(x) = 
 \begin{cases} 1/2 (\pi - x), ~\mbox{ if }~~0< x \le 2\pi ; \\
                          f(x + 2\pi), ~~ \mbox{otherwise}. 
\end{cases}                          
$$
We would like to spare some discussion for this identity. On one hand in \eqref{eqdavenport:2.14} there is the Liouville function, a prime number-theoretic entity. On the other hand one has Riemann's example of an interesting function. The integrated identity can be derived from the functional equation only, but to differentiate it, one needs the estimate for the error term for the Liouville function. This is as deep as the prime number theorem and is known to be very difficult.
 
The situation is similar to Ingham's handling \cite{Ing} of the prime number theorem. First one applies the Abelian process (integration)
and then Tauberian process (differencing) which needs more information. A huge advantage of this process in \cite{K1} is that while the theta-function $\Theta(\tau)$ dwells  in the upper-half plane, its integrated form $F(z)$ is a dweller in the extended upper half-plane which includes the real line. This  makes it possible to consider the behaviour under the increment of the real variable, where the integration is along the horizontal line. The elliptic theta-function $\theta(s)=\Theta(-i\tau)$ is a dweller in the right half-plane $\{\sigma>0 \}$, where the integration is along the vertical line. In terms of Lambert series, an idea of Wintner to deals with limiting the behaviour of the Lambert series on the circle of convergence, i.e. radial integration. Here it corresponds to integration along an arc.
 
One can think of it as  two apparently disjoint aspects merging on the real line as limiting behaviours of zeta and that of theta-functions.
In \cite{K1}, the main observation was that the right - hand side may be viewed as the imaginary part of the integrated theta-series. It seems that the uniform convergence of the left-hand side and the differentiability of the right-hand side merge as the limiting behaviour of a sort of modular function and that of the Riemann zeta-function.

\section{Weierstrass's non-dirrefrentiable function}
We begin with the following function which is due to Weierstrass:
\begin{equation*}\label{wf1}
f(x)=\sum a^n\cos b^n\pi x.
\end{equation*}
In 1875, Weierstrass proved that $f(x)$ has no differential coefficient for any value of $x$ with restrictions  that $b$ is an odd integer,  
\begin{equation}\label{wf2}
0<a<1
\end{equation}
and 
\begin{equation}\label{wf3}
ab>1+\frac{3}{2}\pi.
\end{equation}
This result has been generalized by many mathematicians (for details see \cite{Darboux, Faber, Landberg, Lerch, Wiener}) by considering functions of more general forms 
\begin{equation}\label{wf4}
C(x)=\sum a_n\cos b_n x
\end{equation}
and 
\begin{equation}\label{wf5}
S(x)=\sum a_n\sin b_n x
\end{equation}
where $a_n$ and $b_n$ are positive, the series $\sum a_n$ is convergent, and the sequence $\{b_n\}$ increase steadily with more than a certain rapidity. 
In 1916, G. H. Hardy with a new idea, developed a powerful method to discuss the differentiability Weierstrass's function. This method is easy to apply to find very general conditions for the non-differntiability of the type of series (\ref{wf4}) and (\ref{wf5}). The known results concerning the series (\ref{wf4}) are, so far we are aware, are as follows:  K. Weierstrass gave the condition (\ref{wf3}) and only improvement to this is 
\begin{equation}\label{wf6}
ab>1+\frac{3}{2}\pi(1-a).
\end{equation}
This was due to T. J. Bromwich \cite{Bh}.
The conditions (\ref{wf3}) and (\ref{wf6}) debar the existence of a finite (or infinite) differential coefficient. For the non-existence of a finite differential coefficient, there are alternative conditions which were independently given by U. Dini, M. Lerch and T. J. Bromwich.  
The conditions given by U. Dini are 
\begin{equation}\label{dini1}
ab\geq 1, \hspace{1cm} ab^2>1+3\pi^2
\end{equation} 
and that are given by M. Lerch:
\begin{equation}\label{lerch}
ab\geq 1, \hspace{1cm} ab^2>1+\pi^2.
\end{equation}
Finally, T. J. Bromwich provided the following conditions for the same
\begin{equation}\label{lerch}
ab\geq 1, \hspace{1cm} ab^2>1+\frac{3}{4}\pi^2(1-a).
\end{equation}
All these conditions though supposed that $b$ is an odd integer. However, U. Dini \cite{Dini} showed that if the condition (\ref{wf3}) is replaced by 
\begin{equation}\label{dini2}
ab>1+\frac{3}{2}\pi\frac{1-a}{1-3a}
\end{equation}
or the condition (\ref{dini1}) by 
\begin{equation}\label{dini3}
ab>1, \hspace{1cm} ab^2>1+15\pi^2\frac{1-a}{5-21a}
\end{equation}
then the restriction ``odd" on $b$ may be removed. It is naturally in built  in the condition (\ref{dini2}) that $a<\frac{1}{3}$ and in the condition (\ref{dini3}) that $a<\frac{5}{21}$. 

The conditions (\ref{wf6})--(\ref{dini3}) look superficial though. It is hard to find any corroboration between these conditions as to why they really correspond to any essential feature of the problems arise in discussion of Weierstrass function. They appear merely as a consequence of the limitations of the methods  that were employed. There is in fact only one condition which suggests itself naturally and seems truly relevant to the situation at hand, namely:
 $$
 ab\geq  1.
 $$
The main results that were proved by G. H. Hardy  \cite{H} concerning Weierstrass function and the corresponding function defined by a series of sines and cosines, are summerized below. It is interesting to note that $b$ has no more restriction to be an integer in the next two  results. 
\begin{theorem}(Hardy)\label{HT1}
The functions
 $$
 C(x) = \sum a^n \cos b^n \pi x,\qquad S(x) = \sum a^n \sin b^n \pi x,
 $$
 (with $0 < a < 1, \  b > 1$) have no finite differential coefficient at any point whenever $ ab\geq 1$.
\end{theorem}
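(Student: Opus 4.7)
The plan is to follow Hardy's device of choosing a sequence of increments $h_N \to 0$ along which the difference quotient of the function under study fails to tend to a limit. It is convenient to combine the two functions into $F(x) = C(x) + iS(x) = \sum_{n\ge 0} a^n e^{i b^n \pi x}$; then any finite derivative of $C$ or $S$ at a point $x_0$ would give a finite derivative of $\operatorname{Re} F$ or $\operatorname{Im} F$ at $x_0$, and a contradiction from the existence of either will settle the theorem.

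Fix an integer $N \ge 1$ and split $F = F_N^- + F_N^+$ into the partial sum $F_N^- = \sum_{n<N} a^n e^{ib^n\pi x}$ and the tail $F_N^+ = \sum_{n\ge N} a^n e^{ib^n\pi x}$. I would take the increment in the form $h_N = \eta\, b^{-N}$, where $\eta$ is a bounded real parameter to be chosen. For the head, the mean value theorem gives
\[ \frac{\bigl|F_N^-(x_0+h_N) - F_N^-(x_0)\bigr|}{|h_N|} \;\le\; \pi \sum_{n<N} (ab)^n, \]
which is $O((ab)^N)$ when $ab>1$ and $O(N)$ when $ab=1$. For the tail the triangle inequality gives at most $O((ab)^N/|\eta|)$. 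The key task is now to secure a matching \emph{lower} bound on the tail contribution and to arrange that it is not cancelled by the head.

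To produce this lower bound I would isolate the leading term $n=N$ of the tail, whose modulus after division by $h_N$ equals
\[ \frac{a^N\bigl|e^{i\pi\eta}-1\bigr|}{|h_N|} \;=\; (ab)^N\,\frac{\bigl|e^{i\pi\eta}-1\bigr|}{|\eta|}, \]
and which, for a fixed choice of $\eta$, is of size $c(ab)^N$ for some $c>0$. The remaining terms $n>N$ contribute at most $\bigl(2a/(1-a)\bigr)(ab)^N/|\eta|$, which a suitable choice of $|\eta|$ keeps strictly smaller than the leading term. In the super-critical regime $ab>1$ this already forces $|F(x_0+h_N)-F(x_0)|/|h_N| \to \infty$, contradicting differentiability.

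The main obstacle is the critical case $ab=1$, where both head and tail are merely $O(1)$ and neither dominates. The strategy here is to compare two sequences of increments $h_N^{(j)} = \eta_j b^{-N}$ for two judiciously chosen parameters $\eta_1,\eta_2$ and to show that the corresponding difference quotients have distinct cluster points as $N\to\infty$, contradicting the uniqueness of $F'(x_0)$. A delicate subsidiary issue, arising because $b$ is not assumed to be an integer, is that the phases $\{b^n\pi x_0\}_{n>N}$ cannot be aligned at will, so the comparison must rely purely on the geometric decay of $a^n$ in the tail and on the dominance of the single $n=N$ term. Establishing that the two cluster points really differ, uniformly in the unknown base point $x_0$, is Hardy's key innovation and is where the main technical effort of the proof resides.
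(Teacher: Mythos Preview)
Your elementary splitting argument is essentially Weierstrass's original strategy, and it runs into exactly the obstacle that forced Weierstrass to assume $ab>1+\tfrac32\pi$ rather than $ab\ge 1$. Concretely, your bound on the ``remaining terms $n>N$'' of the tail is
\[
\frac{2a}{1-a}\,\frac{(ab)^N}{|\eta|},
\]
while the leading $n=N$ term has modulus $(ab)^N\,|e^{i\pi\eta}-1|/|\eta|\le 2(ab)^N/|\eta|$. When $a\ge \tfrac12$ one has $2a/(1-a)\ge 2$, so no choice of $\eta$ can make the remainder strictly smaller than the leading term; your lower bound on the tail simply does not follow. A second difficulty is that even if the tail were $\ge c\,(ab)^N$, your head estimate is also of order $(ab)^N$ (namely $\pi(ab)^N/(ab-1)$ when $ab>1$), so the reverse triangle inequality gives nothing unless you can separate the constants---which you cannot, uniformly in $x_0$. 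These are not technicalities: they are precisely why the direct method stalls at the Weierstrass/Dini/Bromwich thresholds listed in the paper.

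Hardy's proof avoids both problems by passing to the harmonic extension $G(r,\theta)=\sum a^n r^{b^n}\cos b^n\theta$ (for integer $b$; an analogous half-plane Poisson formula for general $b>1$). A hypothetical finite derivative of $C$ or $S$ forces, via Fatou-type Lemmas~\ref{HL1}--\ref{HL2} (and their analogues Lemmas~\ref{HL9}--\ref{HL11}), an $o$-estimate on $\partial G/\partial\theta$ as $r\to 1$. The Hardy--Littlewood interpolation Lemma~\ref{HL3} then propagates this $o$ to all derivatives in $y=-\log r$, and this contradicts the $\Omega$-lower bound of Lemma~\ref{HL6}/Proposition~\ref{HP1}. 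The Abel-summation smoothing is what lets one work with the bare condition $ab\ge 1$; your proposal has no substitute for it. Finally, you yourself flag the case $ab=1$ as unresolved; in the paper this case is handled by the same machinery, with Lemma~\ref{HL2} (resp.\ Lemma~\ref{HL11}) replacing Lemma~\ref{HL1}, together with a separate direct treatment of the exceptional points $x=p/b^q$.
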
 
 \begin{remark}\label{HR1}
The above Theorem \ref{HT1} is {\bf{not true}} if the word ``finite" is omitted. 
\end{remark} 
 \begin{theorem}(Hardy)\label{HT2}
Let $ab>1$ and so $\xi=\frac{\log (1/a)}{\log b}<1$. Then each of the functions in the previous theorem satisfy
$$
f(x+h)-f(x)=O\big( |h|^\xi\big),
$$ 
for each value of $x$. Neither of them satisfy 
$$
f(x+h)-f(x)=o\big( |h|^\xi\big),
$$
for any $x$. 
\end{theorem}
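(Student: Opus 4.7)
I would structure the proof around a scale splitting argument dictated by the exponent $\xi$, and prove the positive $O$-statement first, then the negative (no-$o$) statement by carefully selecting a sequence of increments.

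\textbf{The $O(|h|^\xi)$ bound.} Let $h$ be small and choose the integer $N = N(h)$ by $b^N \le 1/|h| < b^{N+1}$, so that $a^N \asymp |h|^\xi$ since $a^N = (b^N)^{-\xi}$. Write
\begin{equation*}
C(x+h) - C(x) = \sum_{n=0}^{N} a^n\bigl[\cos b^n\pi(x+h) - \cos b^n\pi x\bigr] + \sum_{n=N+1}^{\infty} a^n\bigl[\cdots\bigr].
\end{equation*}
On the head I would apply the mean value theorem (or the identity $\cos(A+H)-\cos A = -2\sin(A+H/2)\sin(H/2)$) to get a Lipschitz bound $|\cos b^n\pi(x+h)-\cos b^n\pi x|\le \pi b^n|h|$, yielding $\le \pi|h|\sum_{n=0}^N (ab)^n \ll |h|(ab)^N = |h|a^N b^N \ll a^N$. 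On the tail I would use the trivial bound $\le 2$, yielding $\le 2\sum_{n>N}a^n \ll a^N$. Both pieces are $O(a^N)=O(|h|^\xi)$, and the argument for $S(x)$ is identical.

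\textbf{The negation of $o(|h|^\xi)$.} Fix $x$; I need to exhibit a sequence $h_m\to 0$ along which $|C(x+h_m)-C(x)|\gtrsim |h_m|^\xi$. The plan is to take $h_m = \epsilon_m/b^m$ for a suitable $\epsilon_m\in[-1,1]$ chosen according to the position of $b^m\pi x$ modulo $2\pi$. Using the product-to-sum identity,
\begin{equation*}
a^n[\cos b^n\pi(x+h_m)-\cos b^n\pi x] = -2a^n\sin\!\bigl(b^n\pi x + \tfrac{1}{2}b^{n-m}\pi\epsilon_m\bigr)\sin\!\bigl(\tfrac{1}{2}b^{n-m}\pi\epsilon_m\bigr).
\end{equation*}
For $n<m$ the factor $\sin(b^{n-m}\pi\epsilon_m/2)$ is small of order $b^{n-m}$, so, after summing, the contribution of the head is $O(a^m)$ just as in the first part. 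The $n=m$ term equals $-2a^m\sin(b^m\pi x + \pi\epsilon_m/2)\sin(\pi\epsilon_m/2)$, and by choosing $\epsilon_m\in[-1,1]$ so that $b^m\pi x+\pi\epsilon_m/2$ lies at distance $\ge \pi/4$ from the nearest multiple of $\pi$ while $|\epsilon_m|\ge 1/2$, I can force this single term to have absolute value $\ge c_0\, a^m$ for a universal $c_0>0$. The tail $n>m$ must then be shown to be at most $\tfrac{1}{2}c_0 a^m$ in absolute value; this is the delicate step, and it is where the hypothesis $ab>1$ (equivalently $\xi<1$) is essential, but not sufficient by itself—one must also exploit oscillation/averaging in the tail rather than bound termwise, since $\sum_{n>m} a^n\cdot 2 = 2a^{m+1}/(1-a)$ alone is not small enough when $a$ is close to $1$.

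\textbf{Main obstacle.} The genuine difficulty is controlling the tail $\sum_{n>m}a^n\sin(b^n\pi x + b^{n-m}\pi\epsilon_m/2)\sin(b^{n-m}\pi\epsilon_m/2)$ after the cherry-picked choice of $\epsilon_m$ has fixed the middle term. In Hardy's original argument one sidesteps this by choosing $\epsilon_m$ from a small interval (parametrized by the fractional part of $b^m x$) and then averaging, or by considering a pair of values $\epsilon_m^\pm$ and showing that at least one of the two corresponding differences is $\ge c|h_m|^\xi$. I would follow this two-point trick: define $h_m^\pm = \pm\epsilon/b^m$ and note $C(x+h_m^+)-2C(x)+C(x+h_m^-)$ isolates even contributions while $C(x+h_m^+)-C(x+h_m^-)$ isolates odd ones, and use the decomposition to force one of $|C(x+h_m^+)-C(x)|$ or $|C(x+h_m^-)-C(x)|$ to be $\gtrsim a^m$. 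The analogous argument for $S(x)$ then follows by replacing cosines by sines throughout. Finally, since $a^m\asymp |h_m|^\xi$, the failure of $f(x+h)-f(x)=o(|h|^\xi)$ is established at the arbitrary point $x$.
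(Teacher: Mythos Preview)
Your proof of the $O(|h|^\xi)$ bound is correct and is exactly what the paper does: split at the scale $N$ with $b^N|h|\asymp 1$, bound the head by the Lipschitz estimate and the tail trivially, and observe both pieces are $O(a^N)=O(|h|^\xi)$.

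The negation part, however, has a genuine gap, and your proposed strategy is \emph{not} the one the paper (following Hardy) uses. You try to isolate the $n=m$ term of size $\asymp a^m$ and then beat the tail $\sum_{n>m}a^n(\cdots)$. You correctly identify that the termwise bound $2a^{m+1}/(1-a)$ is too large when $a$ is close to $1$, but your suggested fixes---averaging over $\epsilon_m$, or the second-difference ``two-point trick''---do not close the gap. In the second difference $C(x+h)-2C(x)+C(x-h)=-4\sum a^n\cos(b^n\pi x)\sin^2(\tfrac12 b^n\pi h)$ the tail terms still have no sign control for generic $x$, and the trivial bound is again $O(a^{m+1}/(1-a))$. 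This obstacle is precisely why Weierstrass, and later Dini, Lerch, and Bromwich, all needed auxiliary hypotheses of the type $ab>1+\tfrac{3}{2}\pi$ or $ab^2>1+c$: those are exactly the conditions under which a single dominant term beats the termwise tail bound. Under the bare hypothesis $ab>1$ no such direct argument is known.

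The paper's route is entirely different. One supposes $g(\theta+h)-g(\theta)=o(|h|^\xi)$ and passes to the harmonic extension $G(r,\theta)=\sum a^n r^{b^n}\cos b^n\theta$. Lemma~2.1 converts the $o(|h|^\xi)$ hypothesis into $\partial G/\partial\theta=o((1-r)^{\xi-1})$, i.e.\ $F(y)=-\sum(ab)^n e^{-b^ny}\sin b^n\pi x=o(y^{\xi-1})$. Lemma~2.4 gives crude $O$-bounds for all derivatives $F^{(p)}$, and the Hardy--Littlewood interpolation Lemma~2.3 then upgrades these to $F^{(q)}(y)=o(y^{\xi-q-1})$ for every $q$. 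Finally Lemma~2.6 supplies the contradiction $F^{(q)}(y)=\Omega(y^{\xi-q-1})$ for $q$ large, provided $x\neq p/b^k$. The exceptional $x=p/b^k$ are then handled by a separate direct computation at $h=0$, where the difference $f(h)-f(0)=-2\sum a^n\sin^2(\tfrac12 b^n\pi h)$ is one-signed and the tail problem evaporates. It is this Abel-summation/Tauberian machinery---not a refined choice of increments---that makes the sharp condition $ab\ge 1$ accessible; your proposal misses this key idea.
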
 
Hardy proved these theorems in two steps. In the first step, he considered $b$ an integer and then in the second step, he extended his proof for general case. In the next two subsections, we give the outline of the proof of these theorems. 
\subsection{$b$ is an integer}
Let us substitute $\theta = \pi x$ and then the function of Weierstrass becomes  a Fourier series in $\theta$. Following which he 
defines a harmonic function $G(r, \theta)$ by the real part of the power series:
$$
\sum a_nz^n=\sum a_n r^n e^{ni\theta}.
$$
This series is convergent when $r < 1$. One further supposes that $G (r, \theta)$ is continuous for $r\leq 1$, and that 
$$
G(1, \theta) = g(\theta).
$$
Let us first recall some results concerning the function $G(r, \theta)$ under the above assumptions. We also use the familiar Landau symbol:\\
$f(n) = o(g(n))$ which means that for all $c > 0$ there exists some $k > 0$ such that $0 \le f(n) < cg(n)$ for all $n \ge k$. The value of $k$ must not depend on $n$, but may depend on $c$.
The first lemma can be proved by considering $\theta_0=0$. 
\begin{lemma}\label{HL1}
Let 
$$
g (\theta)- g (\theta_0) = o (|\theta-\theta_0|)
$$
where $0 < \alpha < 1$ and $\theta \to \theta_0$. Then
$$
 \frac{\delta G (r, \theta_0)}{\delta\theta_0} = o{(1-r)^{(1-\alpha)}}
$$
whenever  $r\to 1$.
\end{lemma}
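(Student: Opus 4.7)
The plan is to recognise that the hypotheses identify $G(r,\theta)$ with the Poisson integral of its boundary values $g$, and then reduce the problem to a Hölder-type estimate on the radial behaviour of the Poisson kernel's derivative. Concretely, I would first write
$$
G(r,\theta) \;=\; \frac{1}{2\pi}\int_{-\pi}^{\pi} P_r(\theta-\phi)\,g(\phi)\,d\phi,
\qquad P_r(\theta)=\frac{1-r^2}{1-2r\cos\theta+r^2},
$$
which is valid because $G$ is harmonic in $r<1$ and continuous up to $r=1$ with boundary datum $g$. Differentiating under the integral and exploiting the fact that $\int_{-\pi}^{\pi}\partial_\theta P_r(\theta_0-\phi)\,d\phi=0$, I would subtract $g(\theta_0)$ inside the integrand to obtain
$$
\frac{\partial G}{\partial\theta}(r,\theta_0)\;=\;\frac{1}{2\pi}\int_{-\pi}^{\pi} \partial_\theta P_r(\theta_0-\phi)\bigl[g(\phi)-g(\theta_0)\bigr]\,d\phi.
$$

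The next step is the standard kernel bound
$$
\bigl|\partial_\theta P_r(\theta)\bigr| \;\lesssim\; \frac{(1-r)\,|\theta|}{\bigl((1-r)^2+\theta^2\bigr)^2},
$$
obtained by direct differentiation and the inequality $1-2r\cos\theta+r^2 \asymp (1-r)^2+\theta^2$. Fixing $\varepsilon>0$ and using the hypothesis, I would choose $\eta>0$ so that $|g(\phi)-g(\theta_0)|\le \varepsilon|\phi-\theta_0|^{\alpha}$ whenever $|\phi-\theta_0|<\eta$, and split the integral at $|\phi-\theta_0|=\eta$. On the inner piece, the change of variables $\phi-\theta_0=(1-r)v$ turns the bound into
$$
\varepsilon\,(1-r)^{\alpha-1}\int_{-\infty}^{\infty}\frac{|v|^{1+\alpha}}{(1+v^{2})^{2}}\,dv,
$$
and the remaining integral is finite because $\alpha<1$. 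On the outer piece the kernel derivative is uniformly $O(1-r)$ (with constant depending on $\eta$), so that contribution is $O(1-r)$, which is itself $o((1-r)^{\alpha-1})$ as $r\to 1$.

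Combining the two pieces yields $|\partial_\theta G(r,\theta_0)|\le \varepsilon'(1-r)^{\alpha-1}$ once $r$ is close enough to $1$, which is the asserted $o\bigl((1-r)^{-(1-\alpha)}\bigr)$ statement. The main obstacle I anticipate is not the arithmetic of the kernel estimate, which is routine, but ensuring that the $o$-hypothesis on $g$ propagates to an $o$-conclusion on $\partial_\theta G$: this forces the $\varepsilon$–$\eta$ split above rather than a single uniform Hölder bound, and one must check that the outer piece is genuinely absorbed by the (blowing-up) inner piece. Everything else — the Poisson representation, the subtraction trick, and the scaling $u=(1-r)v$ — is mechanical.
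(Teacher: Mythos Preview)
Your proof is correct and is exactly the standard Poisson-integral argument that the paper (following Hardy's original 1916 paper) has in mind; the paper itself offers no proof beyond the one-line remark that one may normalise to $\theta_0=0$, and your $\varepsilon$--$\eta$ split together with the kernel bound $|\partial_\theta P_r(\theta)|\lesssim (1-r)|\theta|/((1-r)^2+\theta^2)^2$ and the scaling $\phi-\theta_0=(1-r)v$ supplies precisely the details that remark takes for granted. Nothing needs to be changed.
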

The next lemma is a  well-known result and interested reader  can find a proof of it in \cite{Fatou}. 
\begin{lemma}\label{HL2}
Suppose $g(\theta)$ has a finite differential coefficient $g'(\theta_0)$ for $\theta=\theta_0$. Then 
$$
\frac{\delta G}{\delta \theta_0}\to g'(\theta_0)
$$
with $r\to 1$.
\end{lemma}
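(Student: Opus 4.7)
By hypothesis $G(r,\theta)=\mathrm{Re}\sum a_n r^n e^{in\theta}$ is harmonic in the open unit disc and continuous up to the boundary, where it agrees with $g$; thus $G$ is the Poisson extension of $g$. For $r<1$ the series may be differentiated termwise in $\theta$, so $\delta G/\delta\theta$ is itself a harmonic function of $(r,\theta)$, and the task reduces to showing that $(\delta G/\delta\theta)(r,\theta_0)\to g'(\theta_0)$ as $r\to 1^-$.

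The plan is to subtract off the part of $g$ whose harmonic extension is transparent and then apply Lemma \ref{HL1} to the remainder. Set
\[
h(\theta):=g(\theta_0)+g'(\theta_0)\sin(\theta-\theta_0),\qquad \phi(\theta):=g(\theta)-h(\theta).
\]
Because $\sin(\theta-\theta_0)=(\theta-\theta_0)+O(|\theta-\theta_0|^3)$ and $g$ is differentiable at $\theta_0$, one has
\[
\phi(\theta)-\phi(\theta_0)=g(\theta)-g(\theta_0)-g'(\theta_0)(\theta-\theta_0)+O(|\theta-\theta_0|^3)=o(|\theta-\theta_0|).
\]
The harmonic extension of $h$ is the explicit expression
\[
H(r,\theta)=g(\theta_0)+g'(\theta_0)\,r\sin(\theta-\theta_0),
\]
so $(\delta H/\delta\theta)(r,\theta_0)=g'(\theta_0)\,r\to g'(\theta_0)$ as $r\to 1$. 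Writing correspondingly $G=H+\Phi$, with $\Phi$ the harmonic extension of $\phi$, and invoking Lemma \ref{HL1} at the critical exponent one gets $(\delta\Phi/\delta\theta)(r,\theta_0)=o(1)$; adding the two contributions yields the claim.

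The main obstacle is that Lemma \ref{HL1} is stated for exponents strictly less than one, whereas the decomposition above forces us to use the endpoint value $\alpha=1$, which is precisely the Tauberian threshold of Abel summability. Either one checks that the Abelian summation argument underlying Lemma \ref{HL1} carries over to $\alpha=1$ with the conclusion reading $o(1)$ in place of $o((1-r)^{1-\alpha})$, or one invokes directly the classical Fatou theorem on radial limits of Poisson integrals at points of differentiability, which is exactly why \cite{Fatou} is cited at this juncture. An alternative would be to work directly with the Poisson kernel representation, transferring the $\theta$-differentiation onto $g$ via integration by parts and estimating in a small arc around $\theta_0$, but this route effectively reproves the same endpoint statement and is no shorter.
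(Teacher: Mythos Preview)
The paper does not actually prove Lemma~\ref{HL2}; it simply records it as ``a well-known result'' and refers the reader to Fatou \cite{Fatou}. So there is no in-text argument to compare against, and your second option---invoke Fatou directly---is precisely what the paper does.

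Your decomposition is correct and is the standard way to reduce to the zero-derivative case: the harmonic extension of $h(\theta)=g(\theta_0)+g'(\theta_0)\sin(\theta-\theta_0)$ is exactly $H(r,\theta)=g(\theta_0)+g'(\theta_0)\,r\sin(\theta-\theta_0)$, and its $\theta$-derivative at $\theta_0$ is $g'(\theta_0)r\to g'(\theta_0)$. You are also right to flag that Lemma~\ref{HL1} is stated only for $0<\alpha<1$, so it cannot be quoted as a black box for the remainder $\phi$.

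One point worth making explicit: the endpoint statement you need---``$\phi(\theta)-\phi(\theta_0)=o(|\theta-\theta_0|)$ implies $(\partial\Phi/\partial\theta)(r,\theta_0)=o(1)$''---is \emph{exactly} Lemma~\ref{HL2} in the special case $g'(\theta_0)=0$. So the subtraction of $h$ does not reduce the analytic difficulty; it only normalises the target limit to zero. What actually settles the matter is your parenthetical remark that the Poisson-kernel estimate behind Lemma~\ref{HL1} goes through at $\alpha=1$: splitting the integral of $(\partial_\theta P_r)(\theta_0-t)\,[\phi(t)-\phi(\theta_0)]$ into $|t-\theta_0|<\delta$ and $|t-\theta_0|\ge\delta$, the near part is $O(\varepsilon)$ uniformly in $r$ (because $\int |u|^{2}(1+u^2)^{-2}\,du<\infty$) and the far part is $O((1-r)\delta^{-3})\to 0$. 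That computation \emph{is} Fatou's theorem for the derivative of the Poisson integral, which is why the paper simply cites \cite{Fatou}. If you want a self-contained proof, carry out that two-region Poisson estimate rather than appealing to Lemma~\ref{HL1}.
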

The next result is a special case of a general theorem proved by J. E. Littlewood and G. H. Hardy in \cite{HL}. 
\begin{lemma}\label{HL3}
Let $f(y)$ is a real or complex valued function of the real
variable $y$, possessing a $p^{th}$ differential coefficient $f^{(p)} (y)$ which is continuous in $(0, y_0]$. Let $\lambda\geq 0$ and that
$$
f (y) = o (y^{-\lambda})
$$
whenever $\lambda > 0$ and
$$
f (y) =A + o(1)
$$
whenever $\lambda = 0$. Also in either cases that
$$
f^{ (p)} (y) = O ( y^{-p-\lambda}).
$$
Then
$$
f^{(q)}(y)=o(y^{-q-\lambda})
$$
for $0 < q < p$.
\end{lemma}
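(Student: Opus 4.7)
The plan is to realize $f^{(q)}(y)$ as a discrete-sampling operator applied to $f$, with a remainder controlled by $f^{(p)}$. For a positive step $h$ and $k=1,\dots,p$, Taylor's formula with integral remainder gives
\begin{equation*}
f(y+kh) \;=\; \sum_{j=0}^{p-1}\frac{(kh)^j}{j!}\,f^{(j)}(y) \;+\; \frac{(kh)^p}{(p-1)!}\int_{0}^{1}(1-t)^{p-1}f^{(p)}(y+tkh)\,dt.
\end{equation*}
Viewing these $p$ identities as a (scaled) Vandermonde system in the unknowns $f^{(0)}(y),\dots,f^{(p-1)}(y)$ and solving for the $q$-th component, I obtain a representation
\begin{equation*}
f^{(q)}(y) \;=\; \frac{1}{h^{q}}\sum_{k=1}^{p} c_{k,q}\,f(y+kh) \;+\; h^{p-q}\,E_q(y,h),
\end{equation*}
where the constants $c_{k,q}$ depend only on $p$ and $q$, the error $E_q$ is a bounded linear combination of values of $f^{(p)}$ on $[y,y+ph]$, and $\sum_{k=1}^{p}c_{k,q}=0$ whenever $q\ge 1$ (apply the identity to $f\equiv 1$, for which the left-hand side vanishes and the remainder is zero).

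With this in hand I would set $h=\varepsilon y$ for a small parameter $\varepsilon\in(0,1/p)$, so that every sample point $y+kh$ lies in $[y,2y]$. The hypothesis $f^{(p)}(\xi)=O(\xi^{-p-\lambda})$ then yields $|E_q(y,h)|\le C_2\,y^{-p-\lambda}$, whence the remainder contributes at most $C_2\varepsilon^{p-q}y^{-q-\lambda}$. For the main term, the hypothesis on $f$ gives $f(y+kh)=o(y^{-\lambda})$ when $\lambda>0$; when $\lambda=0$ one has instead $f(y+kh)=A+o(1)$, but the constant $A$ is annihilated by $\sum_{k}c_{k,q}=0$, so in both cases the main term is bounded by $o(y^{-q-\lambda})/\varepsilon^{q}$. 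Thus
\begin{equation*}
y^{q+\lambda}\,|f^{(q)}(y)| \;\le\; \frac{o(1)}{\varepsilon^{q}} \;+\; C_2\,\varepsilon^{p-q}.
\end{equation*}
Given $\delta>0$, I would fix $\varepsilon$ so small that $C_2\varepsilon^{p-q}<\delta/2$ (possible because $p>q$), and then choose $y^{*}>0$ so that $o(1)/\varepsilon^{q}<\delta/2$ for $0<y<y^{*}$, concluding $f^{(q)}(y)=o(y^{-q-\lambda})$.

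The most delicate step is the algebra producing the representation, and in particular the vanishing $\sum_{k}c_{k,q}=0$ for $q\ge 1$, which is what permits the $\lambda=0$ case to be treated on the same footing as $\lambda>0$. This is best conceptualized by recognizing $h^{-q}\sum_{k}c_{k,q}f(y+kh)$ as the $q$-th derivative at $y$ of the Lagrange interpolating polynomial of $f$ through the nodes $y+h,\dots,y+ph$: the representation is simply the statement that this derivative differs from $f^{(q)}(y)$ by a controlled Taylor remainder, and this derivative is exact on polynomials of degree $<p$ (in particular, on constants). Once the identity is in place, the analytical content reduces to the routine interpolation of an $o$-bound and an $O$-bound through a judicious choice of step size, exactly as carried out above.
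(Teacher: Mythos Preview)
Your argument is correct. The paper does not actually prove this lemma: it simply records it as ``a special case of a general theorem proved by J.~E.~Littlewood and G.~H.~Hardy in \cite{HL}'' and uses it as a black box. So there is no in-paper proof to compare against.

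Your route---writing $f^{(q)}(y)$ as a finite-difference combination of $f(y+h),\dots,f(y+ph)$ via the (inverted) Vandermonde/Taylor system, with an integral remainder controlled by $f^{(p)}$, and then choosing the step $h=\varepsilon y$ and optimizing in $\varepsilon$---is essentially the standard Landau--Kolmogorov interpolation argument and is close in spirit to what Hardy and Littlewood do in the cited paper. The one genuinely delicate point, which you handled correctly, is the $\lambda=0$ case: the cancellation $\sum_{k} c_{k,q}=0$ for $q\ge 1$ (verified by plugging in $f\equiv 1$, or equivalently by recognizing the combination as a derivative of the Lagrange interpolant) is exactly what kills the constant $A$ and puts that case on the same footing as $\lambda>0$. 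A purely cosmetic remark: it is worth stating explicitly that for $y$ small enough all sample points $y+kh\in(0,y_0]$, so that the continuity hypothesis on $f^{(p)}$ legitimizes the integral-remainder form of Taylor's theorem used at the outset.
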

Now by setting $e^{-y}=u,\ f(y)=\sum a_nu^n$ and then applying 
$$
a_0+a_1+\cdots+a_n=s_n=1+b^\rho+b^{2 \rho}+\cdots+b^{\nu \rho}
$$
for $b^\nu\leq n<b^{\nu+1}$, one can easily get:
\begin{lemma}\label{HL4}
Let us suppose $\rho > 0$ and that 
$f (y) = \sum b^{n\rho}e^{-b^ny}$.
Then
$$
 f (y) =O (y^{-\rho})
$$
as $y\to 0$.
\end{lemma}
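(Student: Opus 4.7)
I would follow the template set up in the paragraph just before the lemma: rewrite $f(y)$ as an ordinary power series in $u=e^{-y}$ and then extract the stated bound via Abel summation, converting back to $y$ at the end using $1-e^{-y}\sim y$. Since $b$ is a positive integer throughout this subsection, set
$$
a_m=\begin{cases} b^{\nu\rho}, & m=b^{\nu}\text{ for some integer }\nu\ge 0,\\ 0, & \text{otherwise,}\end{cases}
$$
so that $f(y)=\sum_{m=0}^{\infty} a_m u^m$. For $b^{\nu}\le n<b^{\nu+1}$ the partial sum takes the closed form promised in the hint,
$$
S_n=\sum_{m=0}^{n} a_m = 1+b^{\rho}+b^{2\rho}+\cdots+b^{\nu\rho} = \frac{b^{(\nu+1)\rho}-1}{b^{\rho}-1},
$$
and combined with $b^{\nu}\le n<b^{\nu+1}$ this yields $S_n=O(n^{\rho})$ uniformly in $n$.

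\textbf{Key step.} Applying Abel summation to $f(y)=\sum_m a_m u^m$, and using that the boundary term $S_M u^{M}$ tends to $0$ because $S_M=O(M^{\rho})$ and $0<u<1$, I obtain
$$
f(y)=(1-u)\sum_{m=0}^{\infty} S_m u^{m}=O\!\left((1-u)\sum_{m=0}^{\infty} m^{\rho}u^{m}\right).
$$
The standard asymptotic $\sum_{m=0}^{\infty} m^{\rho}u^{m}=O\bigl((1-u)^{-\rho-1}\bigr)$ as $u\to 1^{-}$, obtained by comparing $\sum m^{\rho}e^{-my}$ with $\int_{0}^{\infty} t^{\rho}e^{-ty}\,dt=\Gamma(\rho+1)\,y^{-\rho-1}$ and using $-\log u\sim 1-u$, then gives $f(y)=O\bigl((1-u)^{-\rho}\bigr)$. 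Since $1-u=1-e^{-y}\sim y$ as $y\to 0^{+}$, this collapses to $f(y)=O(y^{-\rho})$, as required.

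\textbf{Main obstacle.} There is no genuine obstacle once the hinted reindexing of $f(y)$ as a power series in $u$ is made; the entire argument is just Abel summation plus two elementary asymptotics. The only point that requires a little care is the integral comparison $\sum_{m\ge 0} m^{\rho} u^{m}\asymp (1-u)^{-\rho-1}$ when $\rho$ is not an integer, which I would handle by noting that $t\mapsto t^{\rho}e^{-ty}$ is unimodal on $[0,\infty)$ with maximum at $t=\rho/y$ and bounding the difference between the sum and the integral by a constant multiple of that maximum, which is itself absorbed into the integral. Everything else is routine.
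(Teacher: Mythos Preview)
Your proof is correct and follows exactly the route the paper sketches in the sentence preceding the lemma: substitute $u=e^{-y}$, reindex $f$ as a power series $\sum a_m u^m$, observe that the partial sums $s_n=1+b^{\rho}+\cdots+b^{\nu\rho}$ for $b^{\nu}\le n<b^{\nu+1}$ are $O(n^{\rho})$, and convert this into $f(y)=O(y^{-\rho})$ via Abel summation and the standard asymptotic for $\sum m^{\rho}u^{m}$. The paper gives no further detail beyond that hint, so your write-up is a faithful completion of its intended argument.
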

The next result is also not difficult to prove.
\begin{lemma}\label{HL5}
 Let
$$
\sin b^n\pi x\to 0
$$
as $n\to 0$. Then
$x=\frac{p}{b^q}$ for some integers $p$ and $q$.
\end{lemma}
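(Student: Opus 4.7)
The plan is to reinterpret the hypothesis in terms of the distance from $b^n x$ to the nearest integer, and then exploit the multiplicative relation $b^{n+1}x = b\cdot b^n x$ to show that $b^n x$ must actually be an integer for some $n$ (taking the statement \enquote{as $n\to 0$} to be a typo for \enquote{as $n\to\infty$}, which is the only interpretation that makes the conclusion meaningful).

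The key observation is that $\sin(b^n \pi x)\to 0$ is equivalent to $\|b^n x\|\to 0$, where $\|y\|$ denotes the distance from $y$ to the nearest integer. Write $b^n x = m_n + \varepsilon_n$, where $m_n\in\mathbb{Z}$ is the nearest integer and $|\varepsilon_n|=\|b^n x\|$. By hypothesis $\varepsilon_n\to 0$, so we may choose $N$ so large that $|\varepsilon_n|<\tfrac{1}{2b}$ for every $n\ge N$ (here we use that $b$ is an integer $\ge 2$, which is the standing assumption in this subsection).

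For such $n$, multiplying by $b$ gives $b^{n+1}x = b m_n + b\varepsilon_n$ with $|b\varepsilon_n|<\tfrac12$; hence $b m_n$ is \emph{the} nearest integer to $b^{n+1}x$, i.e.\ $m_{n+1}=bm_n$ and $\varepsilon_{n+1}=b\varepsilon_n$. A trivial induction then yields
\[
 \varepsilon_{N+k} \;=\; b^k\,\varepsilon_N \qquad (k\ge 0).
\]
If $\varepsilon_N\neq 0$, the right-hand side has absolute value $b^k|\varepsilon_N|\to\infty$, contradicting $\varepsilon_n\to 0$. Therefore $\varepsilon_N=0$, which means $b^N x = m_N\in\mathbb{Z}$; setting $p=m_N$ and $q=N$ gives the required representation $x=p/b^q$.

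The argument is essentially a one-liner once the notation is set up, so there is no real obstacle here; the only point that warrants a brief word is the use of $b\ge 2$ to guarantee that the geometric blow-up $b^k|\varepsilon_N|$ is genuinely unbounded, and the use of $|\varepsilon_n|<1/(2b)$ (rather than the weaker $|\varepsilon_n|<1/2$) to ensure that \emph{after} multiplication by $b$ we are still safely within the unambiguous range for the nearest-integer function.
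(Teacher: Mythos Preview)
Your proof is correct. The paper does not actually give a proof of this lemma; it merely remarks beforehand that ``the next result is also not difficult to prove'' and then records the consequence (the Remark following the lemma) that $\sin b^n\pi x=0$ for all $n\ge q$. Your argument supplies exactly the kind of short, elementary verification the paper alludes to: translating $\sin b^n\pi x\to 0$ into $\|b^n x\|\to 0$, then using the relation $b^{n+1}x=b\cdot b^n x$ together with the integrality of $b$ (the standing assumption of the subsection) to force $\|b^N x\|=0$ for some $N$. The handling of the threshold $|\varepsilon_n|<\tfrac{1}{2b}$ is precisely what is needed to make the nearest-integer identification unambiguous after multiplication by $b$, and your reading of ``$n\to 0$'' as ``$n\to\infty$'' is clearly the intended interpretation. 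Your proof also makes the subsequent Remark in the paper immediate, since once $b^N x\in\mathbb{Z}$ we have $b^n x\in\mathbb{Z}$ for all $n\ge N$.
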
 
\begin{remark}
It is clear from the above lemma that $\sin b^n\pi x=0$ for $n\geq q$.
\end{remark}
To state the next lemma, one needs the following notations which was introduced by G. H. Hardy and J. E. Littlewood in \cite{HL14}. The notation $f = \Omega(\phi)$
basically signifies the negation of $f = o (\phi)$, that is to say as asserting the existence of a constant $K$ such that $|f|>K\phi$ for some special sequence of values whose limit is that to which the variable is supposed to tend. The sequence that one can use to prove the following lemma is the values of $y$, that is $y=\frac{\rho}{b^m}$ for $m=1, 2, 3, \ldots$
\begin{lemma}\label{HL6}
Suppose that
$$
 f (y)= \sum b^{n\rho} e^{-b^ny}\sin b^n\pi x,
$$
where $y > 0$, and that
$$
x\ne \frac{p}{b^q}
$$
for any integral values of $p$ and $q$. Then
$$
f(y) =\Omega (y^{-\rho})
$$
for all sufficiently large values of $\rho$.  
\end{lemma}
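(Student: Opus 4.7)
The plan is to evaluate $f$ at the sequence $y_m := \rho/b^m$ (as suggested in the paragraph preceding the lemma), noting that $b^m y_m = \rho$ makes $n=m$ the maximizer of the summand $b^{n\rho}e^{-b^n y_m}$ treated as a continuous function of $n$. The strategy is a discrete Laplace-type argument: show that the $n=m$ term dominates the remainder of the series, provided $|\sin b^m \pi x|$ is bounded away from zero. Lemma~\ref{HL5} supplies such $m$'s: since $x\neq p/b^q$ for any integers $p,q$, $\sin b^n\pi x\not\to 0$, and we may fix $\delta>0$ together with an increasing sequence $\{m_k\}$ satisfying $|\sin b^{m_k}\pi x|\ge \delta$ for all $k$.

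For each $k$, decompose $f(y_{m_k}) = T_0 + T_- + T_+$, where $T_0 = b^{m_k\rho}e^{-\rho}\sin b^{m_k}\pi x$ is the central term (so $|T_0|\ge \delta\,b^{m_k\rho}e^{-\rho}$) and $T_-, T_+$ collect the indices $n<m_k$ and $n>m_k$ respectively. Writing $j=m_k-n$ in the head and $\ell=n-m_k$ in the tail yields
\[
|T_-|\;\le\;b^{m_k\rho}\sum_{j\ge 1} b^{-j\rho}e^{-\rho/b^{j}},\qquad
|T_+|\;\le\;b^{m_k\rho}\sum_{\ell\ge 1} b^{\ell\rho}e^{-\rho b^\ell}.
\]
In each series the ratio of the $(j{+}1)$-th to the $j$-th summand (respectively $(\ell{+}1)$-th to $\ell$-th) is $b^{-\rho}\exp\bigl(\rho(b-1)/b^{j+1}\bigr)$ and $b^\rho\exp\bigl(-\rho b^\ell(b-1)\bigr)$, and both tend to $0$ as $\rho\to\infty$ in view of the elementary inequalities $\ln b>(b-1)/b^2$ and $b(b-1)>\ln b$, valid for all $b>1$. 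Consequently for $\rho$ large the two sums are dominated by their first terms, giving $|T_-|\le 2\,b^{(m_k-1)\rho}e^{-\rho/b}$ and $|T_+|\le 2\,b^{(m_k+1)\rho}e^{-\rho b}$.

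Dividing by $|T_0|$ produces
\[
\frac{|T_-|}{|T_0|}\le\frac{2}{\delta}\,e^{\rho(1-\ln b-1/b)},\qquad
\frac{|T_+|}{|T_0|}\le\frac{2}{\delta}\,e^{\rho(1+\ln b-b)},
\]
and the convex inequalities $\ln b+1/b>1$ and $b-1>\ln b$ (both strict for $b>1$) render the two exponents strictly negative. Thus $\rho$ can be chosen large enough that both ratios drop below $1/4$, whence $|f(y_{m_k})|\ge\tfrac12 \delta\, b^{m_k\rho}e^{-\rho}$. Since $y_{m_k}^{-\rho}=b^{m_k\rho}/\rho^{\rho}$, this gives
\[
\frac{|f(y_{m_k})|}{y_{m_k}^{-\rho}}\;\ge\;\frac{\delta}{2}\Bigl(\frac{\rho}{e}\Bigr)^{\!\rho}\;=:\;K\;>\;0
\]
along the sequence $y_{m_k}\downarrow 0$, establishing $f(y)=\Omega(y^{-\rho})$ for every such $\rho$. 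The main obstacle is isolating the single dominating term $T_0$; this is the heart of the argument and rests precisely on the two convex inequalities above together with the observation that both flanking series are essentially geometric with ratio shrinking to $0$ once $\rho$ is large.
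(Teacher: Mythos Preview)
Your proof is correct and follows precisely the approach the paper hints at: take the test sequence $y_m=\rho/b^m$, invoke Lemma~\ref{HL5} to extract a subsequence $\{m_k\}$ along which $|\sin b^{m_k}\pi x|\ge\delta$, and then show the $n=m_k$ term dominates the rest for large $\rho$. The elementary inequalities you use ($\ln b>(b-1)/b^2$, $b(b-1)>\ln b$, $\ln b+1/b>1$, $b-1>\ln b$ for $b>1$) are all valid and yield the required domination, so nothing is missing.
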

We are now in a position to give outline of the proofs of Theorem \ref{HT1} and Theorem \ref{HT2}. We  give the proof for cosine series and then we provide the outline of the same for the sine series. 
We begin the proof with the following conditions:
\begin{equation}\label{wf7}
ab> 1
\end{equation}
and 
\begin{equation}\label{wf8}
x\ne \frac{p}{b^q}.
\end{equation}
Let us suppose
\begin{equation*}\label{wf9}
f(x) =\sum a_n \cos b^n\pi x=\sum a_n \cos b^n\theta = g(\theta)
\end{equation*} 
satisfy the condition
\begin{equation*}\label{wf10}
f (x + h) -f (x) = o (|h|^\xi).
\end{equation*}
That is,
\begin{equation}\label{wf11}
g (\theta + h) -f (\theta) = o (|h|^\xi)
\end{equation}
with 
\begin{equation*}\label{wf12}
\xi=\frac{\log (1/a)}{b}<1.
\end{equation*}
Then if 
$$
G (r, \theta) =\sum a_n r^{b^n} \cos b^n\theta=\sum a_n e^{-b^ny} \cos b^n\pi x,
$$
we have (using Lemma \ref{HL1}),
\begin{eqnarray*}
 F(y)=\frac{\delta G}{\theta}&=&-\sum (ab)^n e^{-b^ny} sin b^n \pi x\\
&=&-\sum b^{(1-\xi)n} e^{-b^ny} sin b^n \pi x\\
& =& o(y^{\xi -1})
\end{eqnarray*}
when $r\to 1,\ y\to 0$.
 
Again using Lemma \ref{HL4}, one has,
\begin{eqnarray*}
F^{(p)} (y)& =& ( - 1 )^{p+1}\sum (ab^{p+1})^n e^{-b^ny} \sin b^n \pi x\\
& =& O\bigg( \sum b^{(p+1-\xi)n}e^{-b^ny}\bigg)\\
& =& O(y^{\xi-1})
\end{eqnarray*}  
 for all positive values of $p$. It follows from Lemma \ref{HL3} that
$$
F^{(q)} (y) = o(y^{\xi-q-l})
$$
for $0 < q < p$, and thus for all positive values of $q$. But this contradicts the assertion of  Lemma \ref{HL6}, if $q$ is sufficiently large. Hence the conditions (\ref{wf11}) can not besatisfied.
The case in which
$$
 ab = 1,\ \ \xi = 1,
$$
may be treated in the same manner. The only difference is that one should use
Lemma \ref{HL2} instead of Lemma \ref{HL1}, and that the final conclusion is that:\\
$f (x)$ cannot possess a finite differential coefficient for any value of $x$ which is not of the form $\frac{p}{b^q}$.
 
This approach though fails in the case when $x = \frac{p}{b^q}$. These values of $x$ need to be treated differently. In this case,
$$
\cos \{b^n \pi (x + h)\} = \cos (b^{n-q} p\pi + b^n\pi h) = \pm \cos b^n\pi h
$$
for $n > q$. One takes negative signif both $b$ and $p$ are odd, and
positive sign otherwise. Therefore the properties of the function in the neighbourhood of such a value of $x$ are the same, for the present purpose, as those
of the function
$$
f (h) =\sum a^n \cos b^n\pi h
$$
when $h\to 0$. Thus
\begin{eqnarray*}
f (h) -f (0) &=&-2\sum a^n \sin^2\frac{1}{2}b^n\pi h\\
&=& - 2\sum (f_1 +f_2).
\end{eqnarray*}
Here,
$$
 f_1 = \sum_0^{\nu} a^n \sin^2\frac{1}{2}b^n\pi h\ \ \text{and}\ \ f_2 =\sum_{\nu+1}^\infty a^n sin^2\frac{1}{2} b^n \pi h.
$$
We now choose $\nu $ in such a way that
\begin{equation}\label{wf13}
 b^\nu |h|\leq  b^{\nu+1}|h|.
\end{equation}
Then
\begin{eqnarray*}
f_1 +f_2 > f_1 > \sum_0^\nu a^n (b^nh)^2 &=&h^2\frac{(ab^2)^{\nu+1}-1}{ab^2-1}\\
&>& Kh^2 (ab^2)^\nu > Ka^\nu > Kb^{-\xi\nu} > K|h|^\xi
\end{eqnarray*}
where the $K$'s are constants. Therefore,
$$
f (h) )-f (0)\ne o (| h|^\xi).
$$
This completes the proof when $ab > 1,\ \xi < 1$. In this case the graph of $f (h)$ has a cusp (pointing upwards) for $h = 0$, and that of Weierstrass's function has a cusp for $x = \frac{p}{b^q}$. On the other hand, if $ab = 1,\ \xi = 1$, then it is proved that
$$
{\over{\lim}}_{h\to 0+}\frac{f(h)-f(0)}{h}<0,
$$
and
$$
{\underline{\lim}}_{h\to 0-}\frac{f(h)-f(0)}{h}>0,
$$
so that $f (h)$ has certainly no finite differential coefficient for $h = 0$, nor the 
Weierstrass's function has for $x = \frac{p}{b^q}$.
This completes the proof of Theorem \ref{HT1} and Theorem \ref{HT2} in so far as they relate to the cosine series and are of a negative character. Only part remains is to show that, when $\xi < 1$, Weierstrass's function satisfies the condition
$$
 f(x + h) -f(x) = O (|h|^\xi)
$$
for all values of $x$.  One starts with the left hand side:
\begin{eqnarray*}
f(x+h)-f(x)& =& -2\sum a^n\sin \{b^n\pi (x+ h)\} \sin\frac{1}{2}b^n \pi h\\
 &=& O\bigg(\sum a^n| \sin\frac{1}{2} b^n \pi h\bigg).
\end{eqnarray*}
Again choose $\nu $ as in (\ref{wf13}) and then we have
\begin{eqnarray*}
f(x+h)-f(x)& =& O(|h| \sum_0^{\nu} a^nb^n+\sum_{\nu+1}^\infty a^n)\\
 &=& O(a^\nu b^\nu |h|+a^\nu)\\
 &=&O(a^\nu)\\
 &=&O(|h|^\xi).
\end{eqnarray*}
Hence the condition is satisfied, and in fact it holds uniformly in $x$. It is observed that the above argument fails when $ab = 1,\ \xi = 1$. In this case though one can
only say that
 $$
 f(x + h) -f(x) = O (\nu |h| I + a^\nu) = O\bigg(|h| \log\frac{1}{|h|}\bigg).
 $$
It is also be observed that the argument of this paragraph applies to
the cosine series as well as to the sine series. This is indeed independent of the restriction that $b$ is an integer.
 
The proof of Theorem \ref{HT1} and Theorem \ref{HT2} are now complete so far as the
cosine series is concerned. The corresponding proof for the sine series differs
only in detail. The subsidiary results required are the same except that Lemma \ref{HL5} is being replaced by the following one. 
\begin{lemma}\label{HL7}
If
$$
\cos b^n\pi x\to 0,
$$
then $b$ must be odd and
$$
x=\frac{p+\frac{1}{2}}{b^q};
$$
so that $\cos b^n\pi x = 0$ from a particular value of $n$ onwards.
Also the corresponding changes must be made in Lemma \ref{HL6}.
\end{lemma}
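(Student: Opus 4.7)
The plan is to imitate the proof of Lemma \ref{HL5} (the sine analogue), replacing the role of integers by half-integers. First, I would rewrite the hypothesis $\cos b^{n}\pi x\to 0$ in the convenient form
$$
b^{n}x = m_{n} + \tfrac{1}{2} + \delta_{n}, \qquad m_{n}\in\mathbb{Z},\ \delta_{n}\to 0,
$$
using that the zero set of cosine is exactly the odd multiples of $\pi/2$ and that, for each $n$ large, there is a unique nearest such zero to $b^{n}\pi x$.

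Next, I would exploit the self-similarity $b^{n+1}x = b\cdot b^{n}x$ to couple consecutive terms. Substituting the representation above on both sides gives
$$
m_{n+1} - b\,m_{n} \;=\; \frac{b-1}{2} \;+\; (b\delta_{n}-\delta_{n+1}).
$$
The left-hand side is an integer, while the parenthesised term on the right tends to $0$. For all sufficiently large $n$ this forces $(b-1)/2\in\mathbb{Z}$, i.e.\ $b$ is odd, and simultaneously $b\delta_{n}-\delta_{n+1}=0$ exactly.

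The third step is to conclude that the recurrence $\delta_{n+1}=b\delta_{n}$, together with $b>1$ and $\delta_{n}\to 0$, forces $\delta_{n}=0$ for every sufficiently large $n$; otherwise $|\delta_{n}|$ would grow geometrically. Let $q$ be the smallest index from which $\delta_{n}$ vanishes. Then $b^{q}x=p+\tfrac{1}{2}$ for some integer $p$, so $x=(p+\tfrac{1}{2})/b^{q}$, which is the desired form. Moreover, for $n\geq q$ we have $b^{n}x=b^{n-q}(p+\tfrac{1}{2})$, and since $b$ is odd, $b^{n-q}$ is odd, so $b^{n}x$ is a half-integer and $\cos b^{n}\pi x=0$ from that point on. The parallel modification of Lemma \ref{HL6} is then immediate: along the test sequence $y=\rho/b^{m}$, one isolates the $m$th term of the series and, under the hypothesis $x\neq (p+\tfrac{1}{2})/b^{q}$ for any integers $p,q$, uses the lower bound on $|\cos b^{m}\pi x|$ guaranteed by the present lemma to produce the required $\Omega(y^{-\rho})$ estimate.

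The main obstacle is step three: one must extract from the integrality constraint both the parity of $b$ and the \emph{exact} eventual termination $\delta_{n}=0$, rather than merely a rationality statement. The delicate point is that a careless argument yields only $x\in\mathbb{Q}$ with denominator dividing some power of $b$; the half-integer shape is genuinely forced by the rigid interplay between the integrality of $m_{n+1}-bm_{n}$ and the fixed offset $(b-1)/2$, and between the geometric expansion $\delta_{n+1}=b\delta_{n}$ and the boundary condition $\delta_{n}\to 0$.
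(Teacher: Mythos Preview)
The paper does not supply a proof of this lemma (nor of its sine companion, Lemma~\ref{HL5}); both are simply stated, following Hardy. Your argument is correct and is precisely the natural adaptation one would give: the identity
\[
m_{n+1}-b\,m_{n}=\frac{b-1}{2}+(b\delta_{n}-\delta_{n+1}),
\]
with an integer left-hand side and a right-hand side whose variable part tends to $0$, forces first $(b-1)/2\in\mathbb{Z}$ (hence $b$ odd) and then $b\delta_{n}-\delta_{n+1}=0$ exactly for large $n$; the geometric blow-up versus $\delta_{n}\to 0$ then yields eventual vanishing. The only cosmetic point is that you need not isolate the \emph{smallest} such index---any $q$ at or beyond the threshold where the recurrence holds already gives $b^{q}x=m_{q}+\tfrac12$, which is all the lemma claims.
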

If the value of $x$ is not exceptional (i.e. one of those as is specified in Lemma \ref{HL7}), one can repeat the arguments that was used in the case when (\ref{wf7}) and (\ref{wf8}) hold. Thus it is only necessary to discuss the exceptional values, which can exist only when $b$ is odd. In this case we have,
\begin{eqnarray*}
\sin \{b^n\pi(x+h)\} &=&\sin(b^{n-q}p\pi+ \frac{1}{2}b^{n-q}\pi+b^n\pi h)\\
&=&\pm \sin (\frac{1}{2}b^{n-q}\pi + b^n\pi h),
\end{eqnarray*}
for $n > q$, the sign has to be fixed as in the case when $x=\frac{p}{b^q}$. The last function is numerically equal to $\cos b^n\pi h$. It always has the same sign as $\cos b^n \pi h$, or always the opposite sign, if $b = 4k + 1$. While whenever $b$ is of the form $4k+3$, the corresponding signs agree and differ alternatively. Therefore we are reduced to either to discuss the function 
$$
f (h) =\sum a^n \cos b^n\pi h
$$ 
near $h = 0$, or to that of
$$
f (h) =\sum (-a)^n \cos b^n\pi h.
$$
The need is to show that
$$
f ( h )-f (0)\ne o(|h|^\xi)
$$
if $\xi < 1$, and that $f (h)$ has no finite differential coefficient for $h = 0$, if $\xi= 1$.

To do this let us consider the special sequence of values
 $$
 h = \frac{2}{b^\nu}\qquad (\nu = 1,\ 2,\ 3,\cdots).
 $$
Then we have
\begin{eqnarray*}
f(h)-f(0) &=&-2\sum_{0}^{\nu-1}(-a)^n \sin^2\frac{1}{2} b^n \pi h\\
 &=&(-1)^\nu 2a^{\nu-1}\sum_1^{\nu}\left(-\frac{1}{a}\right)^n \sin^2 \frac{\pi}{b^n}.
 \end{eqnarray*}
Now 
$$
\sum_1^\nu \left(-\frac{1}{a}\right)^n \sin^2\frac{\pi}{b^n}\to \sum_1^\infty \left(-\frac{1}{a}\right)^n \sin^2\frac{\pi}{b^n}=S  \hspace*{1mm}{\mbox{(say)}}.
$$ 
Now as $S$ is the sum of an alternating series of decreasing terms, it is positive.
Again, we have
$$
a^\nu = b^{-\xi\nu}=\left(\frac{1}{2}h\right)^\xi.
$$
Thus 
$$
|f (h) - f (0)| > c h^\xi,
$$ 
for some constant $c$ and is alternately positive and negative. This completes the proof of Theorem \ref{HT1} and Theorem \ref{HT2}.

Now the time is to comeback to Remark \ref{HR1}, that is the question remains whether an equally comprehensive result holds for infinite differential coefficients. The result that includes the Remark \ref{HR1}, shows that the answer to this question is negative.
\begin{theorem}\label{HT3}
If
$$ 
ab\geq 1\ \ \text{and}\ \ a(b + a) < 2
$$
then the sine series has the differential coefficient $+\infty$ for $x = 0$. If $b = 4k + 1$ and $x=\frac{1}{2}$,
then the same is true of the cosine series .
\end{theorem}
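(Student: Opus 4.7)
The two statements are linked. For $b=4k+1$, $b^n\equiv 1\pmod 4$ for every $n\geq 0$, so $\cos(b^n\pi/2)=0$ and $\sin(b^n\pi/2)=1$; the addition formula then gives $\cos\big(b^n\pi(\tfrac12+h)\big)=-\sin(b^n\pi h)$, whence
\[
C\!\left(\tfrac12+h\right)-C\!\left(\tfrac12\right)=-\sum_{n=0}^{\infty}a^n\sin(b^n\pi h).
\]
So, up to sign, the incremental ratio of the cosine series at $x=\tfrac12$ coincides with the incremental ratio of the sine series $S(h):=\sum_n a^n\sin(b^n\pi h)$ at $x=0$. It therefore suffices to prove $S(h)/h\to+\infty$ as $h\to 0$; since $S$ is odd and $S(0)=0$, we may take $h>0$.

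Adopting the dyadic-split idea that already ran through the previous proofs, given a small $h>0$ let $\nu=\nu(h)$ be the largest integer with $b^\nu h\leq\tfrac12$, so that $h\in(1/(2b^{\nu+1}),1/(2b^\nu)]$ and $\nu\to\infty$ as $h\to 0^+$. Write $S(h)=S_1(h)+S_2(h)$, where $S_1$ is the head $(0\leq n\leq\nu)$ and $S_2$ the tail $(n\geq\nu+1)$. Every argument of $S_1$ lies in $[0,\pi/2]$, so the elementary inequality $\sin t\geq(2/\pi)\,t$ gives
\[
\frac{S_1(h)}{h}\;\geq\;2\sum_{n=0}^{\nu}(ab)^n\;=\;\frac{2\big((ab)^{\nu+1}-1\big)}{ab-1}
\]
(with the obvious modification $2(\nu+1)$ when $ab=1$). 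For the tail, $|\sin|\leq 1$ together with $h>1/(2b^{\nu+1})$ yields $|S_2(h)|/h\leq 2(ab)^{\nu+1}/(1-a)$.

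Subtracting, the dominant part of $S(h)/h$ is a multiple of $(ab)^{\nu+1}$ whose coefficient has the form $2/(ab-1)-2\kappa(a,b)/(1-a)$, and the task is to verify this coefficient is positive under the stated hypothesis. The naive choice $\kappa=1$ already gives positivity, but only under the strictly stronger condition $a(1+b)<2$; the main obstacle is exactly the refinement needed to recover the weaker threshold $a(a+b)<2$. The observation that does it is that the only $h$ in its interval for which the crude tail bound $|S_2|/h\leq 2(ab)^{\nu+1}/(1-a)$ is tight is near the lower end, and there the leading tail term $a^{\nu+1}\sin(b^{\nu+1}\pi h)$ has a \emph{favourable} sign; a case split on the location of $b^{\nu+1}h$ in $(\tfrac12,\tfrac b2]$ replaces $\kappa$ by a smaller coefficient and shifts the threshold from $2-a-ab>0$ to $2-a^2-ab>0$, i.e.\ $a(a+b)<2$. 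Once this sharpened comparison is in force, $S(h)/h\geq c\,(ab)^{\nu+1}+O(1)$ with $c>0$, and since $ab\geq 1$ one has $(ab)^{\nu+1}\to\infty$ (the boundary case $ab=1$ is handled by the modification $2(\nu+1)\to\infty$, where $a(a+b)=a^2+1<2$ automatically). This gives $S(h)/h\to+\infty$, which proves the sine statement; the cosine statement then follows from the reduction in the first paragraph.
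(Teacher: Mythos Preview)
Your head/tail split with $\sin t\geq (2/\pi)t$ on the head and $|\sin|\leq 1$ on the tail is exactly the paper's argument (up to a cosmetic shift in the index $\nu$). But the paper stops at precisely what you call the ``naive'' step: it obtains $|f_2|/f_1<1$ for large $\nu$ from the inequality $1-a>ab-1$, i.e.\ from $a(b+1)<2$, and concludes directly. The hypothesis actually \emph{used} in the paper's proof is $a(b+1)<2$; the displayed ``$a(b+a)<2$'' is a misprint for ``$a(b+1)<2$'', which is Hardy's original condition (and is also what gives the lower bound $\alpha(b)\geq 2/(b+1)$ quoted immediately after the proof). So your naive argument \emph{is} the paper's proof, and the extra refinement you sketch is neither present in the paper nor needed to match it.

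On that refinement: the heuristic is plausible (when $1/h$ is near its maximum $2b^{\nu+1}$ the leading tail term has argument near $\pi/2$ and hence favourable sign), but as written it is only a gesture --- you have not fixed the split point, nor checked that the resulting coefficient actually lands on $2-a^2-ab$ rather than something in between. If one genuinely wanted the weaker hypothesis $a(a+b)<2$ this would need a real computation; but that is a stronger theorem than what the paper establishes.

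Your reduction of the cosine case at $x=\tfrac12$ to the sine case at $x=0$ via $b^n\equiv 1\pmod 4$ is exactly what the paper's one-line ``the second follows by the transformation $x=\tfrac12+y$'' is pointing at.
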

It is enough to prove the first statement. The second one then follows by the transformation $x =\frac{1}{2} + y$.

We have,
\begin{eqnarray*}
 \frac{f (h)-f(0)}{h}&=&\frac{1}{h}\sum a^n \sin b^n\pi h\nonumber\\
 & =& \frac{1}{h}\sum_0^{\nu-1} a^n \sin b^n\pi h + \frac{1}{h}\sum_\nu^\infty a^n \sin b^n \pi h\nonumber\\
& =& f_1 + f_2 \hspace*{1mm} \mbox{(say)}.
\end{eqnarray*}
Here $\nu $ has to be chosen so that 
\begin{equation}\label{wf14}
b^{\nu-1}|h|\leq\frac{1}{2}<b^\nu|h|.
\end{equation}
We first suppose that $ab > 1$ and then,
\begin{equation}\label{wf15}
f_1 > 2\sum_0^{\nu-1}(ab)^n=2\frac{(ab)^\nu-1}{ab-1}
\end{equation}
and
 \begin{equation}\label{wf16}
|f_2| <\frac{1}{|h|} \sum_\nu^{\infty}a^n=\frac{a^\nu}{(1-a)|h|}.
\end{equation}
Now it is clear that
$$
a(b + 1) < 2,\qquad 1 - a > ab - 1
$$
and thus
\begin{equation}\label{wf17}
\frac{1-a}{ab-1}=1+\delta
\end{equation}
where $\delta > 0$. Without loss of generality, one can assume $h$ is so small (or $\nu$ is so large) so that
\begin{equation}\label{wf18} 
\frac{(ab)^\nu-1}{(ab)^\nu}>\frac{2+\delta}{2(1+\delta)}.
\end{equation}
Then from (\ref{wf14})-(\ref{wf18}) it follows that           
$$
\frac{|f_2|}{f_1}<\frac{(ab)^\nu(ab-1)}{\{(ab)^\nu-1\}(1-a)}<\frac{1}{1+\frac{1}{2}+\delta}.
$$
Thus 
$$
f_1+ f_2 > c_1 f_1 \hspace*{1mm} \mbox{or} \hspace*{1mm} c_2 (ab)^\nu,
$$
for some constants $c_i, i=1,2$.
Thus we have
\begin{equation}\label{wf19}
\frac{f(h)-f(0)}{h}\to +\infty
\end{equation} 
as $h\to 0$.

Next, if $ab = 1$, then $|f_2| < k$, a constant, and that
$$
 f_1 > 2\nu\to +\infty.
$$
Hence (\ref{wf19}) remains true in this case too.

A number $\alpha (b)$ exists when $b$ is given and it is simply the least number such that the condition
$$
ab > \alpha(b).
$$
This debars the existence of a differential coefficient whether finite or infinite. At present all that one 
can say about $\alpha (b)$is that 
$$
\frac{2}{b+1}\leq \alpha(b)\leq \frac{1+\frac{3}{2}\pi}{b+\frac{3}{2}\pi}.
$$
\subsection{$b$ is not an integer.}
One needs to discuss everything those are stated in previous sub-section with
$b$ is no more an integer and thus the series are no longer Fourier series, and one can no longer has the luxary to employ Poisson's integral associated to $G(r,\theta)$. 

The job is naturally to construct a new formula to replace the Poisson's one. Once it is has been achieved,  further modifications of the argument are needed. This is because of the lack of any simple result corresponding to Lemma \ref{HL5} and Lemma \ref{HL7}, and the difficulty of determining precisely the exceptional values of $x$ for which $\sin b^n\pi x\to 0$ or $\cos b^n \pi x\to 0$. The beauty of the argument is that, however, it will be found that no fundamental change in the method is necessary. Also that the additional analysis required is not complicated.

Let $b$ is any number greater than $1$. Also
$$
s = \sigma + it,
$$
(usual in the theory of Dirichlet  series), 
and that
\begin{equation*}\label{wf20}
f(s) =\sum_1^\infty a^n e^{-b^ns} = G(\sigma, t) + iH(\sigma, t), \qquad (\sigma\geq 0)
\end{equation*}
with the condition that
\begin{equation*}\label{wf21}
 G (0, t) = g(t).
\end{equation*}
Then one can show that:
 \begin{lemma}\label{HL8}
 Let $\sigma > 0$. Then
$$ 
G (\sigma, t) =\frac{1}{\pi}\int_{-\infty}^\infty \frac{\sigma g(u)}{\sigma^2+(u-t)^2} du.
$$
\end{lemma}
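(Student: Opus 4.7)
My plan is to exploit the fact that $f(s)$ is given explicitly as an absolutely convergent series of exponentials, apply the classical Fourier identity for the Poisson kernel to each term, and then interchange summation and integration to recover the right-hand side of the claimed formula.

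First I would unwrap the definitions: for $s=\sigma+it$ with $\sigma\geq 0$, the hypothesis $0<a<1$ makes the series defining $f(s)$ uniformly and absolutely convergent on the closed half-plane $\{\sigma\geq 0\}$, and splitting the exponential $e^{-b^n(\sigma+it)}$ into its real and imaginary parts gives
\begin{equation*}
G(\sigma,t)=\operatorname{Re} f(\sigma+it)=\sum_{n=1}^\infty a^n e^{-b^n\sigma}\cos(b^n t).
\end{equation*}
In particular the boundary value is $g(t)=G(0,t)=\sum_{n=1}^\infty a^n\cos(b^n t)$, which is continuous and bounded on $\mathbb{R}$ with $|g(t)|\leq a/(1-a)$.

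The core of the argument is the classical Fourier representation of the Poisson kernel on a half-plane: for $\sigma>0$ and $\lambda\in\mathbb{R}$,
\begin{equation*}
\frac{1}{\pi}\int_{-\infty}^\infty \frac{\sigma\cos(\lambda u)}{\sigma^2+(u-t)^2}\,du = e^{-\sigma|\lambda|}\cos(\lambda t).
\end{equation*}
This is a standard computation (close a semicircular contour in the upper or lower half-plane depending on the sign of $\lambda$, or equivalently invoke the fact that the Fourier transform of $\tfrac{1}{\pi}\sigma/(\sigma^2+x^2)$ equals $e^{-\sigma|\lambda|}$). Specialising to $\lambda=b^n>0$, multiplying by $a^n$, and then summing yields, at least formally,
\begin{equation*}
\frac{1}{\pi}\int_{-\infty}^\infty \frac{\sigma g(u)}{\sigma^2+(u-t)^2}\,du = \sum_{n=1}^\infty a^n e^{-\sigma b^n}\cos(b^n t)=G(\sigma,t),
\end{equation*}
which is precisely the identity claimed by Lemma \ref{HL8}.

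The only point requiring any care is the interchange of summation and integration in the last display. Since $|a^n\cos(b^n u)|\leq a^n$ with $\sum a^n<\infty$, the partial sums of $\sum a^n\cos(b^n u)$ are dominated uniformly in $u$ by $a/(1-a)$, so the integrands are dominated by an integrable multiple of the Poisson kernel; Fubini (or dominated convergence on partial sums) then licenses the swap at once. A more abstract alternative worth mentioning is that $G$ is bounded and harmonic on $\{\sigma>0\}$ with continuous boundary values $g$, so one could instead invoke the uniqueness of the Poisson integral representation of bounded harmonic functions on a half-plane and bypass the term-by-term manipulation entirely; however, the direct route above is cleaner and keeps the proof self-contained.
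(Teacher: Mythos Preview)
Your proof is correct. The paper itself does not supply a proof of this lemma at all: it simply prefaces the statement with ``Then one can show that:'' and moves on, so there is nothing substantive to compare your argument against. Your term-by-term application of the half-plane Poisson kernel identity, together with the trivial domination $\bigl|\sum_{n\le N} a^n\cos(b^n u)\bigr|\le a/(1-a)$ to justify the interchange, is exactly the natural route and would serve perfectly well as the omitted proof.
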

First let us set $ab>1$. In this case one uses  the following ones instead of Lemma \ref{HL1}:
\begin{lemma}\label{HL9}
If $$g (t) - g (t_0) = o (|t - t_0|^\alpha),$$
where $0 < \alpha < 1$, when $t\to t_0$. Then
$$
\frac{\delta G(\sigma, t_0)}{\delta t_0}=o(\sigma^{\alpha-1}),
$$
whenever $\sigma\to 0$.
\end{lemma}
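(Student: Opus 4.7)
The plan is to start from the Poisson representation given in Lemma \ref{HL8}, differentiate under the integral sign, and estimate the resulting convolution against the H\"older-type modulus of continuity assumption on $g$. Differentiating yields
\[
\frac{\partial G(\sigma, t_0)}{\partial t_0} = \frac{1}{\pi}\int_{-\infty}^{\infty} \frac{2\sigma(u-t_0)}{\bigl[\sigma^2+(u-t_0)^2\bigr]^2}\, g(u)\, du.
\]
The kernel $K_\sigma(v) = 2\sigma v/(\sigma^2+v^2)^2$ is odd in $v = u-t_0$, so its integral over $\mathbb{R}$ vanishes. This lets me replace $g(u)$ by $g(u)-g(t_0)$ in the integrand without changing the value, after which the hypothesis $g(u)-g(t_0)=o(|u-t_0|^\alpha)$ can bite.

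The next step is to split the real line at $|v|=\delta$ for a small $\delta>0$ chosen so that $|g(t_0+v)-g(t_0)| \leq \varepsilon |v|^\alpha$ for $|v|\leq \delta$ (possible by hypothesis, for any prescribed $\varepsilon>0$). On the inner piece I substitute $v=\sigma w$ and read off the correct scaling,
\[
\frac{1}{\pi}\int_{|v|\leq \delta}\frac{2\sigma |v|}{(\sigma^2+v^2)^2}\,\varepsilon |v|^\alpha\, dv \;=\; \frac{\varepsilon\,\sigma^{\alpha-1}}{\pi}\int_{|w|\leq \delta/\sigma}\frac{2|w|^{1+\alpha}}{(1+w^2)^2}\,dw,
\]
and since $\alpha<1$ the integrand decays like $|w|^{\alpha-3}$ at infinity, so the $w$-integral is bounded uniformly in $\sigma$. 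Hence the inner contribution is $O(\varepsilon\,\sigma^{\alpha-1})$.

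For the outer piece $|v|>\delta$, assuming $g$ is bounded on $\mathbb{R}$ (as is the case in our application, where $g$ arises from a uniformly convergent Fourier series), the bound $|g(t_0+v)-g(t_0)|\leq 2\|g\|_\infty$ together with the explicit antiderivative
\[
\int_{\delta}^{\infty}\frac{2\sigma v}{(\sigma^2+v^2)^2}\,dv = \frac{\sigma}{\sigma^2+\delta^2}
\]
shows the outer contribution is $O(\sigma)$, which is $o(\sigma^{\alpha-1})$ since $\alpha-1<0$. Combining, for every $\varepsilon>0$ one has $\partial_t G(\sigma,t_0) = \varepsilon\,O(\sigma^{\alpha-1}) + o(\sigma^{\alpha-1})$, and letting $\varepsilon\to 0$ gives the claimed $o(\sigma^{\alpha-1})$ estimate.

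The main obstacle I expect is a technical one: justifying differentiation under the integral in Lemma \ref{HL8} and controlling the tail of the Poisson integral (the outer-piece step) in situations where $g$ may only be bounded rather than decaying. Both are handled by the crude pointwise bound on $K_\sigma$ away from $v=0$, so once the scaling argument on the inner piece is set up, the rest is mechanical; the real content is the substitution $v=\sigma w$ together with the integrability exponent $\alpha<1$, which is precisely what forces the rate $\sigma^{\alpha-1}$.
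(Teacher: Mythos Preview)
Your proposal is correct and follows essentially the route the paper points to: the paper gives no detailed proof of this lemma, merely remarking that it is ``similar to that of Lemma~\ref{HL1}'' and deferring to Hardy~\cite{H}, with Lemma~\ref{HL8} supplying the half-plane Poisson representation in place of the disc Poisson integral used for Lemma~\ref{HL1}. Your argument---differentiate the kernel of Lemma~\ref{HL8}, exploit its oddness to subtract $g(t_0)$, then split at $|v|=\delta$ and rescale $v=\sigma w$ on the inner piece---is exactly the standard mechanism behind both lemmas, so there is nothing to add.
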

\begin{lemma}\label{HL10}
$$
\frac{\delta G(\sigma, t_0)}{\delta \sigma}=o(\sigma^{\alpha-1})
$$
under the same conditions as in the previous Lemma.
\end{lemma}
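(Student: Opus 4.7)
The plan is to prove Lemma \ref{HL10} by directly differentiating the Poisson integral representation provided by Lemma \ref{HL8}. Starting from
\[
G(\sigma, t_0) = \frac{1}{\pi}\int_{-\infty}^\infty \frac{\sigma\, g(u)}{\sigma^2 + (u-t_0)^2}\, du,
\]
differentiation under the integral sign in $\sigma$ produces the kernel
\[
K(u;\sigma,t_0) = \frac{(u-t_0)^2 - \sigma^2}{\bigl(\sigma^2 + (u-t_0)^2\bigr)^2}.
\]
The crucial observation is the cancellation $\int_{-\infty}^\infty K(u;\sigma,t_0)\, du = 0$, which follows by differentiating the standard identity $\int \sigma/(\sigma^2 + (u-t_0)^2)\, du = \pi$ with respect to $\sigma$. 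This lets us replace $g(u)$ by $g(u) - g(t_0)$ in the integrand without altering its value.

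Next I would split the integration at $|u-t_0| \le \delta$ versus $|u-t_0| > \delta$. On the inner piece, the hypothesis $g(u) - g(t_0) = o(|u - t_0|^\alpha)$ allows us, for any $\varepsilon > 0$, to choose $\delta$ so that $|g(u) - g(t_0)| \le \varepsilon |u - t_0|^\alpha$ throughout. The substitution $v = (u - t_0)/\sigma$ scales everything homogeneously and yields
\[
\Biggl|\int_{|u-t_0|\le \delta} K\bigl(g(u) - g(t_0)\bigr)\, du\Biggr| \le \varepsilon\, \sigma^{\alpha - 1}\int_{-\infty}^\infty \frac{|v^2 - 1|\,|v|^\alpha}{(1 + v^2)^2}\, dv,
\]
and the latter integral is finite because the integrand is $O(|v|^{\alpha - 2})$ at infinity with $\alpha < 1$ (and integrable near $v = 0$ since $\alpha > 0$).

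For the outer piece $|u - t_0| > \delta$ I would exploit the boundedness of $g$ (immediate since $g(t) = \sum a^n \cos b^n t$ with $0 < a < 1$), combined with the crude bound $|K(u;\sigma, t_0)| \le 2/(u - t_0)^2$ valid whenever $\sigma$ is sufficiently small compared to $\delta$. The resulting contribution is $O_\delta(1)$, which is $o(\sigma^{\alpha - 1})$ as $\sigma \to 0^+$ because $\alpha - 1 < 0$. Combining the two pieces and then letting $\varepsilon \to 0$ delivers $\partial G(\sigma,t_0)/\partial \sigma = o(\sigma^{\alpha-1})$, the assertion of Lemma \ref{HL10}.

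The main obstacle is the cancellation identity $\int K\, du = 0$; without first subtracting $g(t_0)$, a direct modulus bound on $K$ would only yield $O(\sigma^{-1})$, which is too weak by exactly a factor of $\sigma^\alpha$. Once this cancellation is in place, the argument mirrors the one used for Lemma \ref{HL9}: the only changes are that the derivative is taken with respect to $\sigma$ rather than $t_0$, and the kernel $K$ is even (rather than odd) in $u - t_0$, so the reduction exploits the vanishing mean rather than the symmetry.
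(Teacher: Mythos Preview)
Your argument is correct and follows exactly the route the paper indicates: the paper does not spell out a proof of Lemma~\ref{HL10} but simply remarks that it is ``very similar'' to Lemma~\ref{HL9}, which in turn parallels Lemma~\ref{HL1}, and refers to Hardy~\cite{H} for details. The standard execution of that parallel is precisely what you wrote---differentiate the Poisson representation of Lemma~\ref{HL8} in $\sigma$, exploit the vanishing mean $\int K\,du=0$ to insert $g(u)-g(t_0)$, and then split and rescale; your handling of the inner/outer pieces and the $\varepsilon$--$\delta$ order of limits is sound.
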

The proofs of these lemmas are very similar, and the first is similar
to that of Lemma \ref{HL1}. One can consult \cite{H} for detail of the proofs. 
 
We now discuss the exceptional values of $t$. Suppose that
$$
 g(t + h) -g(t) =o(|h|^\xi).
$$
Then, by Lemma \ref{HL9} and Lemma \ref{HL10}, we have
$$
\frac{\delta G}{\delta t}=-\sum (ab)^n e^{-b^n \sigma}\sin b^n t = o(\sigma^{\xi-1} )
$$
and
$$
\frac{\delta G}{\delta \sigma}=-\sum (ab)^n e^{-b^n \sigma} \cos b^n t = o(\sigma^{\xi-1} ).
$$
Therefore we have
$$
f (y) = \sum (ab)^n e^{-b^n (\sigma+it)}= o(\sigma^{\xi-1} ).
$$
One can now obtain a contradiction by employing the same argument as used earlier when we consider (\ref{wf7}) and (\ref{wf8}). It is only necessary to observe that Lemma \ref{HL3} holds for complex as well as for real functions of a real variable. Also instead of Lemma \ref{HL7}, one has to use the following proposition.
\begin{proposition}\label{HP1}
If
$$
f (y)=\sum b^{n\rho}e^{-b^n(\sigma+it)}, \qquad (\sigma>0),
$$
then
$$
f(y) =\Omega (\sigma^{-\rho})
$$
for all sufficiently large values of $\rho$.
\end{proposition}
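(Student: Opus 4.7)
The plan is to imitate the proof of Lemma \ref{HL6} by producing a sequence $\sigma_m \downarrow 0$ along which $|f(\sigma_m + it)|$ grows like $\sigma_m^{-\rho}$. Fix $t \in \mathbb{R}$ arbitrary and take $\sigma_m := \rho/b^m$, so $\sigma_m \to 0$ as $m \to \infty$. The key point is that, once $\rho$ is taken sufficiently large, the term $n = m$ in the defining series dominates all the others in modulus; this plays the role played by the choice $y = \rho/b^m$ in Lemma \ref{HL6} and replaces the appeal to Lemma \ref{HL5} needed there.

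At $n = m$ one has $b^{n}\sigma_m = \rho$, so the summand equals $b^{m\rho}e^{-\rho}e^{-ib^{m}t}$, of modulus $b^{m\rho}e^{-\rho} = (\rho/e)^{\rho}\sigma_m^{-\rho}$. Writing $n = m + k$ for the remaining terms (with $k \in \mathbb{Z}\setminus\{0\}$, $k \ge -m$),
\[
\bigl|b^{(m+k)\rho}e^{-b^{m+k}(\sigma_m + it)}\bigr| \;=\; b^{m\rho}e^{-\rho}\cdot e^{-\rho\,\gamma_k(b)},
\qquad
\gamma_k(b) := b^{k} - 1 - k\log b.
\]
The convexity inequality $e^{x} \ge 1 + x$, applied with $x = k\log b$, yields $\gamma_k(b) \ge 0$ with strict inequality for every $k \ne 0$. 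Moreover $\gamma_k(b) \to +\infty$ super-exponentially as $k \to +\infty$ and at linear rate $|k|\log b$ as $k \to -\infty$. Consequently
\[
S(\rho, b) \;:=\; \sum_{k \in \mathbb{Z}\setminus\{0\}} e^{-\rho\,\gamma_k(b)}
\]
converges for every $\rho > 0$ and, by dominated convergence, $S(\rho, b) \to 0$ as $\rho \to \infty$.

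Choosing $\rho$ large enough that $S(\rho, b) \le \tfrac{1}{2}$, the triangle inequality gives
\[
|f(\sigma_m + it)| \;\ge\; b^{m\rho}e^{-\rho}\bigl(1 - S(\rho, b)\bigr) \;\ge\; \tfrac{1}{2}(\rho/e)^{\rho}\,\sigma_m^{-\rho},
\]
uniformly in $m$ and in $t$. Since $\sigma_m \to 0^{+}$, this is precisely the assertion $f(y) = \Omega(\sigma^{-\rho})$.

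The only genuine obstacle is the bookkeeping that dominates the off-diagonal modulus terms by $e^{-\rho\,\gamma_k(b)}$, which in turn rests on the elementary strict convexity of the exponential. In contrast with Lemma \ref{HL6}, no exceptional set of $t$'s has to be excluded here: the main term has full modulus $b^{m\rho}e^{-\rho}$ regardless of the phase $e^{-ib^{m}t}$, reflecting the fact that for non-integer $b$ the analogue of the exceptional set $\{p/b^{q}\}$ appearing in Lemma \ref{HL5} simply disappears.
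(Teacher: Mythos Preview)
Your proof is correct and follows exactly the approach the paper indicates: the paper itself does not spell out a proof of Proposition~\ref{HP1} but only remarks (just after Lemma~\ref{HL6}) that one should take the sequence $y=\rho/b^{m}$, and (just after the proposition) that ``there is no longer any question of exceptional values of $t$ as $|e^{-b^{n}it}|=1$''. Your argument makes precisely these two points explicit and supplies the missing bookkeeping that the $n=m$ term dominates once $\rho$ is large, so there is nothing to add.
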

There is no longer any
question of exceptional values of $t$ as $|e^{-b^nit}| = 1$.
 
Next we treat when $ab=1$. In this case instead of Lemma \ref{HL9}, one uses the following result (which corresponds to Lemma \ref{HL2}).
\begin{lemma}\label{HL11}
Let $g (t)$ possesses a finite differential coefficient $g' (t_0)$ for
 $t = t_0$. Then
$$
\frac{\delta G(\sigma, t_0)}{\delta t_0}\to g'(t_0)
$$
when $\sigma \to 0$.
\end{lemma}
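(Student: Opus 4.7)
The plan is to differentiate the Poisson-type integral representation furnished by Lemma \ref{HL8} under the integral sign, obtaining
$$
\frac{\partial G}{\partial t}(\sigma,t)
=\frac{1}{\pi}\int_{-\infty}^{\infty}
\frac{2\sigma(u-t)\,g(u)}{\bigl(\sigma^{2}+(u-t)^{2}\bigr)^{2}}\,du.
$$
Write $K_{\sigma}(v)=\dfrac{2\sigma v}{\pi(\sigma^{2}+v^{2})^{2}}$. The first step is to record two ``moment'' identities for $K_\sigma$, which will show that it behaves as an approximate identity for the derivative at $t_{0}$. Since $\int_{\mathbb{R}}\tfrac{\sigma}{\sigma^{2}+v^{2}}\,dv=\pi$ is independent of any translation parameter, differentiating shows $\int_{\mathbb{R}}K_{\sigma}(v)\,dv=0$; and the substitution $v=\sigma\tan\phi$ gives $\int_{\mathbb{R}}v\,K_{\sigma}(v)\,dv=1$. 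The interchange of differentiation and integration is legitimate because $K_\sigma(v)$ is smooth in $v$ and decays like $|v|^{-3}$ for each fixed $\sigma>0$, and $g$ is bounded (it is the sum of $\sum a^{n}\cos b^{n}t$ with $\sum a^{n}<\infty$).

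Next, using the assumption that $g$ has a finite derivative at $t_{0}$, decompose
$$
g(u)=g(t_{0})+g'(t_{0})(u-t_{0})+r(u),\qquad r(u)=o(|u-t_{0}|)\ \text{as}\ u\to t_{0}.
$$
Substituting this into the integral expression for $\partial G/\partial t$ and using the two moment identities,
$$
\frac{\partial G}{\partial t}(\sigma,t_{0})
=g(t_{0})\!\int K_{\sigma}(v)\,dv+g'(t_{0})\!\int v\,K_{\sigma}(v)\,dv+\int K_{\sigma}(u-t_{0})\,r(u)\,du
=g'(t_{0})+R(\sigma),
$$
so that the statement reduces to showing that the remainder $R(\sigma)\to 0$ as $\sigma\to 0^{+}$.

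The main obstacle, and the real content of the lemma, is the control of $R(\sigma)$, which I would handle by splitting the integral at $|u-t_{0}|=\delta$. On the inner region, given $\varepsilon>0$, choose $\delta$ small enough that $|r(u)|\le \varepsilon|u-t_{0}|$ for $|u-t_{0}|\le\delta$; combined with the bound $\int_{\mathbb{R}}|K_{\sigma}(v)|\,|v|\,dv=1$ (computed exactly as in the second moment identity above), the inner contribution is at most $\varepsilon$. On the outer region, use the crude pointwise estimate $|K_{\sigma}(v)|\le \dfrac{2\sigma}{\pi|v|^{3}}$ for $|v|\ge\delta$, together with the boundedness of $r$ (which follows from the boundedness of $g$), to get an outer contribution of order $\sigma/\delta^{2}$. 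Letting $\sigma\to 0^{+}$ with $\delta$ fixed, then $\delta\to 0^{+}$, and finally $\varepsilon\to 0$, yields $R(\sigma)\to 0$, completing the proof. The argument parallels that of Lemma \ref{HL2} in the disk, with the Poisson kernel on the half-plane replacing the Poisson kernel on the disk.
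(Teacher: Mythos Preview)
Your approach is exactly the one the paper has in mind: the paper does not spell out a proof here, saying only that it ``is no more difficult'' than Lemmas~\ref{HL9}--\ref{HL10}, which in turn are referred back to Hardy~\cite{H}; the intended argument is precisely the half-plane Poisson analogue of Lemma~\ref{HL2} via the representation of Lemma~\ref{HL8}, just as you set it up.

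There is, however, one slip in your treatment of the outer region. You assert that $r$ is bounded ``which follows from the boundedness of $g$'', but with $r(u)=g(u)-g(t_{0})-g'(t_{0})(u-t_{0})$ this is false whenever $g'(t_{0})\ne 0$: the linear term makes $r$ grow without bound. The repair is immediate: from the boundedness of $g$ one has $|r(t_{0}+v)|\le C(1+|v|)$, and combined with $|K_{\sigma}(v)|\le 2\sigma/(\pi|v|^{3})$ for $|v|\ge\delta$ the outer contribution is
\[
\int_{|v|\ge\delta}|K_{\sigma}(v)|\,|r(t_{0}+v)|\,dv
\;\le\;\frac{2C\sigma}{\pi}\int_{|v|\ge\delta}\Bigl(\frac{1}{|v|^{3}}+\frac{1}{|v|^{2}}\Bigr)dv
\;=\;O\!\left(\frac{\sigma}{\delta^{2}}+\frac{\sigma}{\delta}\right),
\]
which still tends to $0$ as $\sigma\to 0^{+}$ for each fixed $\delta$. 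Alternatively (and a little more cleanly), write $g(u)=g(t_{0})+(u-t_{0})\phi(u)$ with $\phi(u)=\bigl(g(u)-g(t_{0})\bigr)/(u-t_{0})$; then $\phi$ is genuinely bounded, $\phi(u)\to g'(t_{0})$ as $u\to t_{0}$, and since $vK_{\sigma}(v)\ge 0$ with $\int vK_{\sigma}(v)\,dv=1$ concentrates at $0$, the conclusion follows by a direct approximate-identity argument. With either fix your proof goes through.
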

The proof of this lemma is no more difficult. One needs to keep in mind though that it is not necessarily true that
$$
\frac{\delta G(\sigma, t_0)}{\delta t_0}
$$
tends to a limit. Thus it is necessary to follow a
slightly different argument from that of when we treated the exceptional values of $t$.
\begin{lemma}\label{HL12}
Under the same conditions as those of Lemma \ref{HL11}, we have
$$
\frac{\delta^2 G(\sigma, t_0)}{\delta t_0^2}= o\bigg(\frac{1}{\sigma}\bigg).
$$
\end{lemma}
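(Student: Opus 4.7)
The plan is to prove Lemma~\ref{HL12} by differentiating the Poisson-type integral representation from Lemma~\ref{HL8} twice under the integral sign and then exploiting both the symmetry of the kernel and the hypothesis that $g'(t_0)$ exists. Set $P_\sigma(v) = \sigma/(\sigma^2+v^2)$ and
$$
K_\sigma(v) \;=\; P_\sigma''(v) \;=\; \frac{2\sigma(3v^2-\sigma^2)}{(\sigma^2+v^2)^3},
$$
so that $\partial_t^2 P_\sigma(u-t)=K_\sigma(u-t)$. Since the kernel is $C^\infty$ in $t$ for $\sigma>0$ and since $g(t)=\sum a^n\cos b^n t$ is uniformly bounded (the series $\sum a^n$ converges absolutely), differentiation under the integral in Lemma~\ref{HL8} is legitimate and yields
$$
\frac{\partial^2 G}{\partial t^2}(\sigma,t_0) \;=\; \frac{1}{\pi}\int_{-\infty}^{\infty} g(u)\,K_\sigma(u-t_0)\,du.
$$

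Next I would use the hypothesis to write
$$
g(u) \;=\; g(t_0) + g'(t_0)(u-t_0) + r(u),
$$
with $r(u)=o(|u-t_0|)$ as $u\to t_0$ and $|r|$ bounded globally. The constant and linear pieces drop out: the constant contribution is zero because $\int P_\sigma(u-t)\,du = \pi$ is independent of $t$, whence $\int K_\sigma(v)\,dv=0$; and the linear contribution vanishes because $K_\sigma$ is even in $v$ while the factor $v=u-t_0$ is odd. Thus the entire matter reduces to showing that
$$
I(\sigma) \;:=\; \int_{-\infty}^{\infty} r(t_0+v)\,K_\sigma(v)\,dv \;=\; o(1/\sigma).
$$

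For this I would split at $|v|=\delta$ with $\delta$ small. The crucial estimate is the scaling identity, obtained by $v=\sigma w$:
$$
\int_{-\infty}^{\infty}|v|\,|K_\sigma(v)|\,dv \;=\; \frac{2}{\sigma}\int_{-\infty}^{\infty}\frac{|w|\,|3w^2-1|}{(1+w^2)^3}\,dw \;=\; \frac{C}{\sigma},
$$
the $w$-integral being a finite absolute constant. On $|v|<\delta$, the bound $|r(t_0+v)|\le\varepsilon|v|$ (valid once $\delta$ is small, for any prescribed $\varepsilon>0$) combines with this scaling to give a contribution of size $O(\varepsilon/\sigma)$. On $|v|\ge\delta$, the pointwise bound $|K_\sigma(v)|\lesssim \sigma/|v|^4$ (valid for small $\sigma$) together with the global boundedness of $r$ yields a contribution of size $O(\sigma)$. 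Sending $\varepsilon\to 0$ after $\sigma\to 0$ produces the required $o(1/\sigma)$.

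The main obstacle is the scaling computation for $\int|v|\,|K_\sigma(v)|\,dv$, which is what makes the argument work: without the extra factor of $|v|$ coming from the differentiability hypothesis, the integral $\int|K_\sigma(v)|\,dv$ is already of order $1/\sigma$ and no cancellation would be possible. The remaining technicalities—verifying that the constant and linear terms cancel and that differentiation under the integral is admissible—are routine once one trusts Lemma~\ref{HL8} and the absolute convergence of $\sum a^n$.
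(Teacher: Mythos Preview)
Your approach is the natural one and is in substance what the paper has in mind: the paper gives no proof of Lemma~\ref{HL12} beyond the blanket remark that these lemmas are proved like Lemma~\ref{HL1}, i.e.\ by differentiating the Poisson-type representation of Lemma~\ref{HL8} and splitting the integral near and away from $t_0$. Your scaling computation $\int |v|\,|K_\sigma(v)|\,dv = C/\sigma$ is exactly the mechanism that converts the differentiability hypothesis into the gain of one power of $\sigma$, and the vanishing of the constant and linear contributions is correct as stated.

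One small correction is worth recording. The remainder $r(u)=g(u)-g(t_0)-g'(t_0)(u-t_0)$ is \emph{not} globally bounded: $g$ is bounded but the subtracted linear term is not, so $|r(t_0+v)|\le 2\|g\|_\infty+|g'(t_0)|\,|v|$ is the correct global bound. This does not damage your far-range estimate, since for $|v|\ge\delta$ and $\sigma$ small one has $|K_\sigma(v)|\le 6\sigma/v^4$, and hence
\[
\int_{|v|\ge\delta}\bigl(2\|g\|_\infty+|g'(t_0)|\,|v|\bigr)\,|K_\sigma(v)|\,dv
\;\le\; \frac{4\|g\|_\infty}{\delta^{3}}\,\sigma+\frac{6|g'(t_0)|}{\delta^{2}}\,\sigma
\;=\;O(\sigma),
\]
so the tail is still negligible and the conclusion $I(\sigma)=o(1/\sigma)$ follows exactly as you wrote.
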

Suppose  that $g (t)$ has a finite differential coefficient $g' (t)$, and write
$$ 
f(\sigma) =\frac{\delta G}{\delta t}=-\sum e^{-b^n\sigma} \sin b^n t.
$$
Then, by Lemma \ref{HL11}, we have
$$
f(\sigma) =g'(t)+o(1)
$$
when $\sigma\to 0$. But by Lemma \ref{HL4} we have,
$$ 
f''(\sigma) = -\sum b^{2n} e^{-b^n\sigma} sin b^n t = O\big(\frac{1}{\sigma^2}\big).
$$
Therefore, by Lemma \ref{HL3},
\begin{equation}\label{wf22}
f'(\sigma) =\sum b^n e^{-b^n\sigma} \sin b^n t =o\left(\frac{1}{\sigma}\right).
\end{equation}
On the other hand, by Lemma \ref{HL12}, we have
\begin{equation}\label{wf23}
\frac{\delta^2 G}{\delta t^2} =-\sum b^n e^{-b^n\sigma} cos b^n t = o\left(\frac{1}{\sigma}\right).
\end{equation} 
Now from (\ref{wf22}) and (\ref{wf23}) it follows that
$$
 F (\sigma) = \sum b^n e^{-b^n(\sigma+it)}=o\big(\frac{1}{\sigma}\big).
 $$
Also, by Lemma \ref{HL4}, we have
$$ 
F^{(p)} (\sigma) =(-1)^p \sum b^{(p+1)n} e^{-b^n(\sigma+it)}=o\big(\frac{1}{\sigma^{p+1}}\big),
$$
for all values of $p$. Thus it follows that the $O$ can be replaced
by $o$; and this leads to a contradiction as before.
 
Finally the following remark completes the proof.
 \begin{remark}The above argument,
 has been stated in terms of Weierstrass's cosine
 series. The same arguments apply to the sine series, as there are now no
 \enquote{ exceptional values}. It was only the existence of such values which
 differentiated the two cases in second subsection. The positive statement in Theorem \ref{HT2} has already been proved, applying to all values of $b$. 
 \end{remark}
\section{Some more non-differentiable functions}
In this section we discuss some more non-differentiable functions which are available in \cite{H}.
\subsection{A function which doesn't satisfy a Lipschitz condition of any order.}
It is interesting to given an example of an absolutely convergent Fourier series whose sum does not satisfy any condition of the following type:
$$ 
f(x + h) - f(x) = O(|h|^\alpha), \qquad ( \alpha > 0)
$$
for any value of $x$. 
An interesting example of such a function is: 
$$
f(x)=\sum\frac{\cos b^n \pi x}{n^2}.
$$
It is in fact easy to prove, by the methods used in the previous section, that
$$ 
f(x + h) -f(x)\ne o\bigg(\frac{1}{| \log|h||}\bigg)^2.
$$
However, a somewhat less simpler example may be
found by simply combining remarks made by G. Faber and G. Landsberg. In \cite{Faber}, G. Faber defined
\begin{equation}\label{wf24}
F(x) = \sum 10^{-n}\phi(2^{n!x}),
\end{equation} 
where 
$$\phi(x) = 
\begin{cases}
x, & for   \hspace*{2mm}0\le x\le 1/2; \cr
 1-x, & for \hspace*{2mm}1/2 \le x\le 1. \cr
  \end{cases}
$$
He showed that
$$
F(x + h) - F(x)\ne O\bigg(\frac{1}{|\log|h||}\bigg).
$$
On the other hand, G. Landsberg \cite{Landberg} used the expansion of a function,
which is in a Fourier series equivalent to $\phi(x)$. In fact, 
$$
\phi(x)=\frac{1}{4}-\frac{2}{\pi^2}\sum \frac{\cos 2\nu \pi x}{\nu^2} \qquad (\nu=1, 3, 5, \cdots).
$$
If we substitute this expansion in (\ref{wf24}), we obtain an expansion of $F (x)$
as an absolutely convergent Fourier series, and thus is  an example of the kind we are looking for.
 \subsection{A theorem of S. Bernstein.} It is natural to
suggest following theorem of S. Bernstein \cite{Bernstein} in this connection. This can be proved similarly as is being done in the previous section. 
 \begin{theorem}\label{HT4}
If $f (x)$ satisfies a Lipschitz condition of order $\alpha\ (>2)$ in
$(0,\ 1)$, i.e. if
$$
|f(x + h) -f(x)| < K|h|,
$$
where $K$ is an absolute constant. Then the Fourier series of $f (x)$ is absolutely convergent. Also $\frac{1}{2}$ is the least number which has this property.
\end{theorem}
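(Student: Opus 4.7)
The plan is to exploit Parseval's identity applied to a first-difference of $f$, which converts the H\"older regularity on the space side into a square-summability estimate on the Fourier side. Write the Fourier series of $f$ as $\sum_{n\ge 1}(a_n\cos nx + b_n\sin nx)$ and set $\rho_n^2 := a_n^2+b_n^2$. For a parameter $h>0$ to be chosen, consider the symmetric difference
\[
g_h(x) = f(x+h)-f(x-h),
\]
whose Fourier coefficients are exactly $2\sin(nh)\,(-a_n\sin nx+b_n\cos nx)$ in the natural pairing, so that Parseval gives
\[
\frac{1}{\pi}\int_{-\pi}^{\pi}|g_h(x)|^2\,dx \;=\; 4\sum_{n\ge 1}\sin^2(nh)\,\rho_n^2.
\]
The Lipschitz (really H\"older) hypothesis $|f(x+t)-f(x)|\le K|t|^\alpha$ makes the left-hand side at most $C\,K^2 h^{2\alpha}$.

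Next I would localize to a dyadic block. Choose $h = \pi/(2N)$ so that $nh\in[\pi/4,\pi/2]$ for every integer $n$ with $N/2\le n\le N$, and hence $\sin^2(nh)\ge 1/2$ on that range. The Parseval bound then yields
\[
\sum_{N/2\le n\le N}\rho_n^2 \;\le\; C\,K^2 N^{-2\alpha}.
\]
Applying Cauchy--Schwarz inside the block of cardinality $\asymp N$,
\[
\sum_{N/2\le n\le N}\rho_n \;\le\; \sqrt{N/2}\,\Bigl(\sum_{N/2\le n\le N}\rho_n^2\Bigr)^{1/2} \;\le\; C'\,K\,N^{1/2-\alpha}.
\]
Taking $N=2^k$ and summing over $k\ge 0$ gives $\sum_{n\ge 1}\rho_n \le C''K\sum_{k\ge 0}2^{k(1/2-\alpha)}$, and the geometric series converges precisely because $\alpha>1/2$. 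Since $|a_n|+|b_n|\le \sqrt{2}\,\rho_n$, this establishes absolute convergence of the Fourier series.

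For the sharpness part, I would exhibit a function in $\mathrm{Lip}(1/2)$ whose Fourier series is not absolutely convergent, thereby showing that the exponent $1/2$ cannot be lowered. The natural candidate is a lacunary series of Hardy--Weierstrass type,
\[
f(x) \;=\; \sum_{k=1}^{\infty} 2^{-k/2}\cos(2^k x),
\]
for which $\sum |a_n| = \sum 2^{-k/2}$ diverges, while the H\"older continuity of order $1/2$ follows by splitting $f(x+h)-f(x)$ at the frequency $2^k\sim 1/|h|$: the low-frequency piece is $O(|h|\sum_{2^k\le 1/|h|}2^{k/2}) = O(|h|^{1/2})$ by summing the geometric series, and the high-frequency tail is $O(\sum_{2^k>1/|h|}2^{-k/2})=O(|h|^{1/2})$ by the same geometric argument.

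The main obstacle in turning this sketch into a rigorous proof is the dyadic bookkeeping: one must verify carefully the lower bound $\sin^2(nh)\ge 1/2$ on the chosen window and handle the boundary indices, and in the sharpness argument one must control both the low- and high-frequency pieces uniformly in $x$ in order to get the H\"older bound (as opposed to merely controlling them in $L^2$). All the remaining ingredients---Parseval, Cauchy--Schwarz, and geometric summation---are routine once these steps are set up correctly.
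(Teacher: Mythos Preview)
Your argument for absolute convergence when $\alpha>\tfrac12$ is correct, though it proceeds by a different route from the paper. Hardy works through the harmonic extension $G(r,\theta)=\sum r^n(a_n\cos n\theta+b_n\sin n\theta)$: the uniform H\"older bound gives $\partial G/\partial\theta=O((1-r)^{\alpha-1})$ via the $O$-version of Lemma~\ref{HL1}, and then squaring and integrating over $\theta$ produces $\sum n^2 r^{2n}\rho_n^2=O((1-r)^{2\alpha-2})$; setting $r=1-1/\nu$ and applying Schwarz's inequality finishes exactly as you do. Your Parseval-on-differences argument bypasses the Poisson machinery and is more self-contained, while the paper's version has the advantage of re-using the apparatus (harmonic extensions, Lemma~\ref{HL1}) already set up for the Weierstrass function in \S2.

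The sharpness half, however, contains a genuine error: you claim that $\sum_k 2^{-k/2}$ diverges, but this is a convergent geometric series with ratio $2^{-1/2}<1$. Your candidate $\sum_k 2^{-k/2}\cos(2^k x)$ therefore has an absolutely convergent Fourier series and establishes nothing. The defect is structural, not a slip in the exponent. For any Hadamard-lacunary series $\sum_k c_k\cos(b^k x)$, membership in $\mathrm{Lip}(1/2)$ forces $|c_k|=O(b^{-k/2})$ (this is essentially the content of Theorem~\ref{HT2}), and then $\sum_k|c_k|$ is automatically dominated by a convergent geometric series; so no lacunary example can witness the sharpness. This is precisely why the paper instead uses the full-spectrum example $g(\theta)=\sum_{n\ge1} n^{-b}\cos(n^a+n\theta)$ with $0<a,b<1$ and $\tfrac12 a+b-1>\alpha$: here $\rho_n=n^{-b}$, so $\sum_n\rho_n$ diverges since $b<1$, while the oscillating phase $n^a$ supplies enough cancellation to place $g$ in $\mathrm{Lip}(\tfrac12 a+b-1)$, which can be pushed arbitrarily close to $\tfrac12$.
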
 
\begin{proof}
We assume that $2\pi x = \theta$ and that
$$
f(x) = g(0) = \frac{1}{2}a_0 + \sum (a_n \cos n\theta + b_n \sin n\theta).
$$
Also let
$$
G(r, \theta) = \frac{1}{2}a_0 + \sum r^n(a_n \cos n\theta + b_n \sin n\theta)\hspace*{1mm}\mbox{if} \quad r<1,
$$
and 
$$G(1, \theta) = g(\theta).$$ Then $G(r, \theta)$ is continuous for $$0\leq r\leq  1,\ 0\leq \theta\leq  2\pi.$$
 
It follows from a simple modification (i.e., O in place of o) of Lemma \ref{HL1}, that 
$$
\frac{\delta G}{\delta\theta}= -\sum nr^{n-1}(a_n \sin n\theta -b_n \cos n\theta) = O\{(1 -r)^{\alpha-1}\},
$$
 uniformly in $\theta$. Squaring, and integrating from $\theta  = 0$ to $\theta = 2\pi$, one can obtain
 $$
 \sum n^2 r^{2n} (| a_n |^2 + |b_n |^2) = O(1 -r)^{2\alpha-1}.
 $$
 Hence, by putting $r = 1 - (1/\nu)$, one can obtain
$$
 \sum_1^\nu  n^2( {|a_n|}^2+ {|b_n|}^2) = O(\nu^{2-2\alpha}),
$$
and so, by Schwarz's inequality,
$$
\sum_1^\nu n (|a_n| +|b_n|) =O(\nu^{\frac{3}{2}\alpha}).
$$
Thus it is easy to deduce that the series
$$
\sum_1^\nu n^\beta (|a_n| + |b_n|)
$$
is convergent if $\beta <\alpha-\frac{1}{2}.$

This establishes the first part of Bernstein's Theorem (indeed
more!). The tsecond part is shown by the following example:
$$
 g(\theta) = \sum n^{-b} \cos (n^a + n\theta),
 $$
where $0 < a < 1,\ 0 < b < 1$. In this case $G(r, \theta) $ is the real part of
$$
 F(z) = F(re^{i\theta}) = \sum n^{-b} e^{in^a} z^n.
 $$
This function is continuous (see \cite{H13}) for
$|z|\geq 1$ if
$$
\frac{1}{2} a + b > 1;
$$
and it is not difficult to go further, and to show that $g (\theta)$ satisfies a Lipschitz condition of order $\frac{1}{2}a + b - 1$. 
 
Now let $\alpha$ be any number less than $\frac{1}{2}$. Then one can choose numbers $a$ and $b$, each less than $1$ in such a way that
$$
\frac{1}{2}a + b-1 > \alpha.
$$
Then the function $g (\theta)$ satisfies a Lipschitz condition of order greater than $\alpha$, but its Fourier series is not absolutely convergent. 
\end{proof}
\section{Riemann's non-differentiable function revisited}
Riemann is reported to have stated \cite{BR, H}, but never proved, that the continuous function,
$$
f(x) = \sum_{n=1}^{\infty} \frac{\sin n^2x}{n^2}
$$
is nowhere differentiable. In 1872, K. Weierstrass \cite{Wei} tried to prove this assertion  but couldn't and instead constructed his own example of a continuous nowhere differentiable function defined by (\ref{wf1}) along with the conditions (\ref{wf2}) and (\ref{wf3}). 
J. P. Kahane \cite{Ka} renewed the interest in this classical problem in connection with lacunary series, and refers  to
K. Weierstrass \cite{Wei}. 

Riemann's assertion was partially confirmed by G. H. Hardy \cite{H}, who proved that the above function $f(x)$ has no finite derivative at any point $\xi \pi$,
where $\xi$ is:
\begin{itemize}
\item[(i)] irrational;
\item[(ii)] rational of the form $\frac{2A}{4B+1}$, where $A$ and $B$ are integers;
\item[(iii)] rational of the form $\frac{2A+1}{2(2B+1)}$.
\end{itemize}
We provide an outline of Hardy's method in this case. Suppose that Riemann's function is differentiable for certain values of $x$, then  by Lemma \ref{HL2}, 
$$
\sum r^{n^2}\cos n^2\pi x = A + o (1),
$$
where $A$ is a constant, as $r\to 1$.
However,
$$
\sum r^{n^2}\cos n^2 \pi x = \Omega \{(1 -r)^{- \frac{1}{4}}\}
$$
 if $x$ is irrational, and
$$
\sum r^{n^2}\cos n^2\pi x =\Omega\{(1-r)^{- \frac{1}{2}}\}
$$
if $x$ is a rational of the form $\frac{2\lambda + 1 }{2 \mu}$ or $\frac{2\lambda}{4 \mu + 1}$. Therefore Riemann's function is certainly not differentiable for any irrational (and some rational) values of $x$. It is easy, by using Lemma \ref{HL1}, instead of Lemma \ref{HL2}, to show that Riemann's function cannot satisfy the condition
$$ 
f(x + h) -f( x) = o(| h|^\frac{3}{4})
$$
for any irrational values of $x$. In this context Hardy \cite{H} proved the following theorem:
\begin{theorem}\label{HT5}
None of the functions
$$
f_{c,\alpha}(x)=\sum \frac{\cos n^2 \pi x}{n^\alpha}
$$ 
and
$$
f_{s, \alpha}(x)=\sum \frac{\sin n^2 \pi x}{n^\alpha}
$$
where $\alpha <\frac{5}{2}$, is differentiable for any irrational value of $x$.
\end{theorem}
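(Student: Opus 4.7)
The plan is to push Hardy's Lemma \ref{HL2}-based argument for Riemann's function (the $\alpha=2$ case just sketched) to general $\alpha<5/2$, separating the analytic step from the number-theoretic $\Omega$-bound step.

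Suppose, for contradiction, that $f_{s,\alpha}$ is differentiable at some irrational $x_0$, and set $\theta_0=\pi x_0$. Form the harmonic extension
$$H(r,\theta)=\sum_{n=1}^\infty\frac{r^{n^2}\sin n^2\theta}{n^\alpha},$$
so that $H(1,\theta)=f_{s,\alpha}(\theta/\pi)$ and termwise differentiation gives
$$\frac{\partial H}{\partial\theta}(r,\theta_0)=\sum_{n=1}^\infty n^{2-\alpha}\,r^{n^2}\cos n^2\theta_0.$$
Lemma \ref{HL2} applied to $g(\theta)=H(1,\theta)$ at $\theta_0$ forces $\partial_\theta H(r,\theta_0)\to g'(\theta_0)$ as $r\to 1^-$; in particular
\begin{equation}\label{eq:hp1}
\sum_{n=1}^\infty n^{2-\alpha}\,r^{n^2}\cos n^2\theta_0=O(1)\qquad(r\to 1^-).
\end{equation}

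The proof is completed by a matching $\Omega$-bound
\begin{equation}\label{eq:hp2}
\sum_{n=1}^\infty n^{2-\alpha}\,r^{n^2}\cos n^2\theta_0=\Omega\bigl((1-r)^{-(5-2\alpha)/4}\bigr)
\end{equation}
valid at every irrational $x_0=\theta_0/\pi$. For $\alpha<5/2$ the exponent $(5-2\alpha)/4$ is strictly positive, so \eqref{eq:hp2} makes the left-hand side unbounded as $r\to 1^-$, contradicting \eqref{eq:hp1}; the value $\alpha=5/2$ is exactly where \eqref{eq:hp2} degenerates to $\Omega(1)$ and the method breaks down, matching the statement of the theorem.

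The derivation of \eqref{eq:hp2} follows the scheme underlying Hardy's unweighted bound $\sum r^{n^2}\cos n^2\theta_0=\Omega((1-r)^{-1/4})$: for a sequence of Dirichlet approximants $p_k/q_k\to x_0$ (guaranteed by irrationality), evaluate the weighted theta sum at $r=e^{-y_k}$ with $y_k=1/q_k^2$, apply the theta transformation formula to split off the Gauss sum $S(p_k,q_k)$, and use the classical lower bound $|S(p_k,q_k)|\ge c\sqrt{q_k}$. In the weighted case the main range of summation is $n\sim q_k$, on which the extra factor $n^{2-\alpha}$ contributes a magnitude $\sim q_k^{2-\alpha}=y_k^{-(2-\alpha)/2}$ on top of the $y_k^{-1/4}$ of the unweighted estimate, producing the exponent $1/4+(2-\alpha)/2=(5-2\alpha)/4$ advertised in \eqref{eq:hp2}.

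The argument for $f_{c,\alpha}$ is symmetric: work with $G(r,\theta)=\sum r^{n^2}n^{-\alpha}\cos n^2\theta$, so that Lemma \ref{HL2} forces the sine-weighted sum $\sum n^{2-\alpha}r^{n^2}\sin n^2\theta_0$ to be $O(1)$, and contradict it with the corresponding $\Omega$-bound on the sine-weighted series (again from the theta transformation formula, with no exceptional values to worry about since $\theta_0/\pi$ is irrational, cf.\ the irrational case of Lemma \ref{HL7}).

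The main obstacle is the number-theoretic step \eqref{eq:hp2}: while the Gauss-sum heuristic pinpoints the exponent unambiguously, rigorously controlling the tail $n\gg q_k$ of the weighted sum is delicate when $\alpha<2$ (where the weights $n^{2-\alpha}$ grow) and requires the customary Farey-dissection estimates together with verifying that the Gauss-sum lower bound $|S(p_k,q_k)|\gg\sqrt{q_k}$ survives the weighting. Once \eqref{eq:hp2} is in hand, Theorem \ref{HT5} follows as a one-line contradiction with \eqref{eq:hp1}.
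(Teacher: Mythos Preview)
Your approach is valid in outline but differs from the paper's in the analytic step after Lemma~\ref{HL2}. Both arguments begin by extracting
\[
f(y)=\sum n^{2-\alpha}e^{-n^2y}\cos n^2\pi x_0=A+o(1)
\]
from the assumed differentiability. You then propose to contradict this directly with an $\Omega$-bound $f(y)=\Omega\bigl(y^{-(5-2\alpha)/4}\bigr)$. The paper instead inserts an interpolation: it records the trivial estimate $f^{(p)}(y)=O\bigl(y^{-p-3/2+\alpha/2}\bigr)$, invokes the Hardy--Littlewood interpolation theorem (a refinement of Lemma~\ref{HL3}) to squeeze out $f^{(q)}(y)=o\bigl(y^{-\frac{q}{p}(p+3/2-\alpha/2)}\bigr)$ for every $p$, and only then contradicts, via the $\Omega$-bound $f'(y)=\Omega\bigl(y^{-(9-2\alpha)/4}\bigr)$, on the \emph{differentiated} sum $-\sum n^{4-\alpha}e^{-n^2y}\cos n^2\pi x_0$. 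The resulting inequality $1+\tfrac{3-\alpha}{2p}>\tfrac{9-2\alpha}{4}$ fails for large $p$ precisely when $\alpha<5/2$, matching your threshold.

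Your route is one step shorter, and the $\Omega$-bound you need is of exactly the same shape as the paper's (weight exponent $2-\alpha$ in place of $4-\alpha$), so nothing essential is lost; the interpolation device is Hardy's original packaging rather than a logical necessity here. Two small corrections to your sketch of \eqref{eq:hp2}: the assertion $|S(p_k,q_k)|\ge c\sqrt{q_k}$ is false as stated, since the Gauss sum vanishes when both entries are odd, so a rigorous argument must either select approximants of suitable parity or work with the full complex theta sum and extract the relevant real or imaginary part afterwards; and the pointer to Lemma~\ref{HL7} is off target, as that lemma concerns the Weierstrass sequence $\cos b^n\pi x$ rather than anything about Gauss sums or theta series.
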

\begin{proof}
Suppose that $f_{s, \alpha}$ is differentiable and consequently  Lemma \ref{HL12} would imply,
$$ 
\sum n^{2-\alpha}r^{n^2} \cos n^2 \pi x = A +o(1),
$$
or
$$
 f(y) = \sum n^{2-\alpha} e^{-n^2y} \cos n^2 \pi x = A +o(1).
$$
But,
\begin{eqnarray*}
f^{(p)} (y)&=& ( - 1 )^p \sum n^{ 2p+2-\alpha} e^{-n^2y} \cos n^2 \pi x\\
 &=& O\big(\sum n^{ 2p+2-\alpha} e^{-n^2y} \big) = O (y^{-p-\frac{3}{2}+\frac{\alpha}{2}}).
\end{eqnarray*}
 Hence by the theorem of Hardy and Littlewood \cite{HL14}, we have
 $$
 f^{(q)} (y)=o \left(y^{-\frac{q}{p}(p+\frac{3}{2}-\frac{\alpha}{2}}\right).
 $$
 Here $0 < q < p$, and in particular
 \begin{equation}\label{wf25}
 f' (y)= o \left(y^{-1-\frac{3}{2p}+\frac{\alpha}{2p}}\right).
 \end{equation}
 Again, it is easy to prove that
 \begin{equation}\label{wf26}
 f'(y)= -\sum n^{4-\alpha} e^{-n^y} \cos 2\pi x=\Omega(y^{-\frac{9}{4}+\frac{\alpha}{2}}).
\end{equation}  
From (\ref{wf25}) and (\ref{wf26}) it follows that
$$
1+\frac{3}{2p}-\frac{\alpha}{2p}>\frac{9}{4}-\frac{\alpha}{2}.
$$
But this is not possible if $\alpha  < \frac{5}{2}$ and $p$ is sufficiently large.
It is clear that the series
$f_{c, \beta}$ and $f_{s, \beta}$ with $0 < \beta < \frac{1}{2}$ are not Fourier's series. For if the first one is a Fourier's series, then the sum of the integrated series $f_{s, 2+\beta}$ would be a function of limited total fluctuation, and would therefore be differentiable almost everywhere. 
 
 It is easy to prove directly that the function
 $f_{s, \alpha}$, where $2 < \alpha < \frac{5}{2}$, has the differential coefficient $+ \infty$ for $x = 0$. A similar direct method could no doubt be applied to an everywhere dense set of rational values of $x$.
 \end{proof}
 In 1970, J. Gerver \cite{G1} proved that Riemann's assertion is false, by proving the following result. 
 \begin{theorem}\label{GT1}
The derivative of the following function
$$
f(x) = \sum_{n=1}^{\infty} \frac{\sin n^2x}{n^2}
$$ 
exists and is equal to $- \frac{1}{2}$ at any point $\frac{(2A+1)\pi}{2B+1}$, where $A$ and $B$ are integers. 
 \end{theorem}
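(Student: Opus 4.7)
The plan is to adopt the theta-function strategy of \cite{K1}: reinterpret Riemann's function as the imaginary part of an integrated theta series, and exploit the modular transformation law at the rational point $w_0 = (2A+1)/(2B+1)$. Introduce
\[
F(z) = \sum_{n=1}^{\infty} \frac{e^{\pi i n^2 z}}{n^2}, \qquad \mathrm{Im}(z)\geq 0,
\]
which converges absolutely on the closed upper half-plane and satisfies $f(x) = \mathrm{Im}\,F(x/\pi)$. The theorem reduces to establishing the asymptotic
\[
F(w_0 + h) - F(w_0) = -\tfrac{i\pi}{2}\,h + o(h), \qquad h \in \mathbb{R},\; h \to 0,
\]
since extracting the imaginary part and rescaling between the $w$- and $x$-variables yields $f'(x_0) = -1/2$. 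Inside the open upper half-plane, $F'(z) = \tfrac{i\pi}{2}(\Theta(z) - 1)$, where $\Theta(z) = \sum_{n\in\mathbb{Z}} e^{\pi i n^2 z}$ is the classical Jacobi theta function.

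Next, I would express the real-axis increment through an integral in the upper half-plane, writing
\[
F(w_0 + h) - F(w_0) = \lim_{\varepsilon \to 0^+} \int_0^h F'(w_0 + s + i\varepsilon)\,ds,
\]
which is legitimate by continuity of $F$ on the closed upper half-plane. The constant $-\tfrac{i\pi}{2}$ inside $F'$ integrates directly to $-\tfrac{i\pi}{2}h$, so everything comes down to showing that $\lim_{\varepsilon \to 0^+} \int_0^h \Theta(w_0 + s + i\varepsilon)\,ds = o(h)$. At the rational cusp $w_0 = p/q$ with both $p$ and $q$ odd, the modular transformation of $\Theta$ (via Poisson summation, the very identity Murty--Pacelli \cite{RM} use to derive quadratic reciprocity) gives the expansion
\[
\Theta(w_0 + iy) = \frac{G(p,q)}{\sqrt{qy}}\bigl(1 + O(e^{-c/y})\bigr), \qquad y \to 0^+,
\]
where $G(p,q) = \sum_{k=0}^{q-1} e^{\pi i p k^2/q}$ is a quadratic Gauss sum. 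The odd--odd hypothesis is precisely the classical case in which $G(p,q)$ has an explicit closed-form evaluation (an eighth root of unity times $\sqrt{q}$), and it is this exact value --- the arithmetic nugget powering quadratic reciprocity --- that is responsible for the cancellation producing $o(h)$.

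The main obstacle is the error analysis in this last step, because the $y^{-1/2}$ singularity of $\Theta$ on the real line is not a priori benign: the naive estimate $\int_0^h |\Theta(w_0 + s + i\varepsilon)|\,ds = O(\sqrt{h})$ is \emph{not} $o(h)$. To squeeze out the required cancellation, I would deform the $s$-contour into the upper half-plane and invoke the modular expansion not only at $w_0$ but also at all Farey neighbours $p'/q'$ lying within distance $\sim h$, showing that the singularities contributed by these rationals interfere destructively when $p$ and $q$ are both odd. The odd--odd hypothesis is indispensable here: it is exactly the parity condition under which the associated Gauss sums at $w_0$ and its nearby Farey fractions combine with opposing phases, so that their $\sqrt{h}$-sized contributions cancel and leave only an $o(h)$ remainder. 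For denominators of other parities the signs align constructively, reproducing Hardy's non-differentiability rather than Gerver's differentiability.
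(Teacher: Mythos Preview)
Your framework---passing to the integrated theta series $F$ and extracting the local behaviour at the cusp $w_0=p/q$ via a Gauss-sum expansion of $\Theta$---is exactly the route taken in the paper (following \cite{K1}). The decisive error is in the Gauss sum itself. For $p,q$ both odd the map $n\mapsto e^{\pi i n^2 p/q}$ is \emph{not} $q$-periodic: one checks $e^{\pi i (n+q)^2 p/q}=(-1)^{pq}e^{\pi i n^2 p/q}=-e^{\pi i n^2 p/q}$, so the period is $2q$. Splitting $n$ into residue classes modulo $2q$ and applying Poisson summation gives
\[
\Theta\!\left(\tfrac{p}{q}+iy\right)\;\sim\;\frac{1}{2q\sqrt{y}}\,S(2q,p),\qquad S(2q,p)=\sum_{j=0}^{2q-1}e^{\pi i j^2 p/q},
\]
not your $G(p,q)=\sum_{k=0}^{q-1}e^{\pi i p k^2/q}$. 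And the whole point of the odd--odd hypothesis is that $S(2q,p)=0$: this is Corollary~\ref{cor1reciproc} in the paper, proved by applying the Landsberg--Schaar reciprocity \eqref{eqnrec14-5} twice to obtain $S(2q,p)=c\,S(2q,p)$ with $c\ne 1$. (A direct check at $p=q=1$ already gives $S(2,1)=1+e^{\pi i}=0$, whereas your $G(1,1)=1$.) With the vanishing coefficient in hand, $\Theta(w_0+iy)$ is exponentially small as $y\to 0^+$, the integral $\int_0^h\Theta(w_0+s+i\varepsilon)\,ds$ is $o(h)$ outright, and the asymptotic $F(w_0+h)-F(w_0)=-\tfrac{1}{2}h+O(h^{3/2})$ drops out immediately---this is \eqref{eqnrec9-1} with the coefficient $R(2B+1,2A+1)=0$ from Corollary~\ref{cor2recipro}.

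Because you believed the Gauss sum to be a nonzero eighth root of unity times $\sqrt{q}$, you were forced to invent a ``Farey-neighbour interference'' mechanism to cancel a spurious $\sqrt{h}$ term. No such mechanism is needed, and none would work as described: the $\sqrt{|h|}$ contribution at a single cusp is governed solely by the Gauss sum at that cusp, and when it is nonzero (the even-parity cases) one genuinely gets non-differentiability, not a cancellation from nearby rationals. The fix is simply to use the correct $2q$-term sum and observe that it vanishes.
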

In the same paper, J. Gerver \cite{G1} extended G. H. Hardy's results \cite{H} by proving the following:
 \begin{theorem}\label{GT2}
The derivative of the Riemann functions does not exist at any point $\frac{(2A+1)\pi}{2^N}$, where $N$ is an integer $\geq 1$ and $A$ is any integer.
\end{theorem}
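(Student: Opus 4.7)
The plan is to argue by contradiction, converting the question of differentiability of $f(x)=\sum_{k\geq 1}\sin(k^2x)/k^2$ at the dyadic point $x_0=(2A+1)\pi/2^N$ into a growth estimate for a theta-type sum. Form the harmonic extension $F(r,x)=\sum_{k\geq 1}r^{k^2}\sin(k^2x)/k^2$, whose $x$-derivative is $\partial F/\partial x(r,x_0)=\sum_{k\geq 1}r^{k^2}\cos(k^2x_0)$. Applying Lemma \ref{HL2} to the Abel-summed Fourier series of $f$, the existence of a finite $f'(x_0)$ would force $\partial F/\partial x(r,x_0)\to f'(x_0)$ as $r\to 1^-$, so in particular the sum would stay bounded along $r\uparrow 1$. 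I will instead show that $\sum_{k\geq 1}r^{k^2}\cos(k^2x_0)$ blows up like $(1-r)^{-1/2}$, producing the required contradiction.

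To obtain this blow-up, symmetrise to $\Theta(x_0,y):=\sum_{k\in\mathbb Z}e^{-k^2y}e^{ik^2x_0}=1+2\sum_{k\geq 1}r^{k^2}e^{ik^2x_0}$, where $y=-\log r>0$. Because $(k+2^N)^2x_0-k^2x_0=(2^{N+1}k+2^{2N})x_0\in 2\pi\mathbb Z$ whenever $N\geq 1$, the phase $e^{ik^2x_0}$ is periodic in $k$ with period $M=2^N$; splitting $k=Mm+a$ with $a\in\{0,1,\ldots,M-1\}$ and applying Poisson summation in $m$ gives
\[\sum_{m\in\mathbb Z}e^{-(Mm+a)^2y}=\frac{1}{M}\sqrt{\frac{\pi}{y}}\sum_{\ell\in\mathbb Z}e^{-\pi^2\ell^2/(M^2y)}e^{2\pi i\ell a/M}=\frac{1}{M}\sqrt{\frac{\pi}{y}}\bigl(1+O(e^{-c/y})\bigr),\]
with $c=\pi^2/M^2$. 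Summing over $a$ then yields
\[\Theta(x_0,y)=\frac{G_{A,N}}{2^N}\sqrt{\frac{\pi}{y}}\bigl(1+O(e^{-c/y})\bigr),\qquad G_{A,N}:=\sum_{a=0}^{2^N-1}e^{ia^2(2A+1)\pi/2^N}.\]

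The final step is to verify that $\mathrm{Re}(G_{A,N})\neq 0$. The quantity $G_{A,N}$ is a classical quadratic dyadic Gauss sum, and its modulus and argument can be obtained either by direct induction on $N$ or from the Landsberg--Schaar identity used by Murty--Pacelli \cite{RM}; small-case inspection ($N=1,2,3$) confirms the pattern $|G_{A,N}|=\sqrt{2^N}$ with $\arg(G_{A,N})$ an odd multiple of $\pi/4$, so that $|\mathrm{Re}(G_{A,N})|=\sqrt{2^{N-1}}\neq 0$ for every $A$. Taking real parts in the previous display yields $\sum_{k\geq 1}r^{k^2}\cos(k^2x_0)=\frac{\mathrm{Re}(G_{A,N})}{2^{N+1}}\sqrt{\pi/y}+O(1)\to\pm\infty$ as $y\to 0^+$, contradicting the boundedness inherited from Lemma \ref{HL2}, so $f'(x_0)$ cannot exist. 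The main obstacle is precisely this Gauss-sum step, and it is the point at which the \emph{dyadic} nature of the denominator $2^N$ is decisive: for the odd denominators $2B+1$ treated by Gerver in Theorem \ref{GT1}, the parallel Poisson computation produces a Gauss sum whose leading-order contribution vanishes, allowing the finite limit $-1/2$ to emerge, whereas in the dyadic case the Landsberg--Schaar evaluation rules out any such cancellation and the derivative is forced to blow up; the other steps are routine Abel/Poisson manipulations in the spirit of Hardy's Lemmas \ref{HL1}--\ref{HL6}.
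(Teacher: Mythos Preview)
The paper does not supply its own proof of this theorem; immediately after stating it, the authors write ``One can consult \cite{G1} for detailed proof of Theorem \ref{GT1} and Theorem \ref{GT2}.'' Gerver's original argument is elementary but famously long. Your route is correct and much shorter, and it is essentially Hardy's method (Lemma \ref{HL2} applied to the Poisson extension) combined with the Gauss-sum asymptotic for the theta series---exactly the circle of ideas the paper develops elsewhere. In fact your crucial evaluation of $G_{A,N}$ is precisely case (b) of Lemma \ref{SL3} (Smith's lemma) with $r=2A+1$, $s=2^N$: that lemma gives $G_{A,N}=\pm\sqrt{2^{N-1}}(1\pm i)$, so $|\mathrm{Re}\,G_{A,N}|=\sqrt{2^{N-1}}\neq 0$ as you claim. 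The same conclusion is reached in \S5 via Corollary \ref{cor2recipro}, where the even-denominator case yields $R(2p,q)=e^{\pi i/4}\sqrt{2p}\bigl(\tfrac{-p}{q}\bigr)\neq 0$, forcing the $\sqrt{|h|}$ term in \eqref{eqnrec9-1} to survive.

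One small gap worth noting: Lemma \ref{HL2} is stated for a \emph{finite} differential coefficient, so your contradiction only excludes a finite $f'(x_0)$. To rule out $f'(x_0)=\pm\infty$ as well, observe (as Smith does in \eqref{SE1}) that because $\arg G_{A,N}$ is an odd multiple of $\pi/4$ one has $|C(x_0)|=|S(x_0)|$, so exactly one of $S\mp C$ vanishes; consequently one one-sided derivative is $0$ while the other is infinite, and no two-sided limit (finite or infinite) can exist. With that remark your argument is complete.
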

One can consult \cite{G1} for detailed proof of Theorem \ref{GT1} and Theorem \ref{GT2}. 
 
In 1971, J. Gerver further proved some results concerning the non-differentiability of Riemann's function. More precisely, he proved the following:
 \begin{theorem}\label{G3}
The function 
$$
f(x) = \sum_{n=1}^{\infty} \frac{\sin n^2x}{n^2}
$$  is not differentiable at any point $\frac{2A\pi}{2B+1}$ or $\frac{\pi(2A+1)}{2B}$, where $A$ and $B$ are integers.
\end{theorem}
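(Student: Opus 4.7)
My plan is to follow the integrated theta strategy highlighted in the abstract and introduction. I would first relate the Riemann function to the integrated theta series $F(z) = \sum_{n \geq 1} e^{i\pi n^2 z}/n^2$, so that $f(\pi x) = \operatorname{Im} F(x)$ on the real axis. Although $F$ extends continuously to the closed upper half-plane $\{\operatorname{Im} z \geq 0\}$, its formal derivative is $F'(z) = (i\pi/2)(\Theta(z) - 1)$ where $\Theta(z) = \sum_{n \in \mathbb{Z}} e^{i\pi n^2 z}$ is the Jacobi theta function, which diverges as $z$ approaches rational points of the real line. Consequently the differentiability of $f$ at $x_0 = 2A\pi/(2B+1)$ or $(2A+1)\pi/(2B)$ is controlled by the boundary asymptotics of $\Theta$ at the corresponding rational point.

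The next step is to apply the modular transformation $\Theta(-1/z) = \sqrt{-iz}\,\Theta(z)$, combined with an integer translation sending $x_0/\pi = p/q$ (in lowest terms) to $\infty$ in the standard Hecke fashion used in \cite{RM}. This yields an asymptotic of the shape
$$
\Theta\!\left(\tfrac{p}{q} + i y\right) \sim \frac{G(p,q)}{\sqrt{q y}}, \qquad y \to 0^+,
$$
where $G(p,q) = \sum_{r=0}^{q-1} e^{i\pi r^2 p/q}$ is the associated quadratic Gauss sum. The parity conditions in the theorem correspond precisely to the two cases in which this Gauss sum does not vanish: $p$ even, $q$ odd gives the first family $2A\pi/(2B+1)$, while $p$ odd, $q$ even gives the second family $(2A+1)\pi/(2B)$. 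By the classical Gauss evaluation, $|G(p,q)|$ is a nonzero multiple of $\sqrt{q}$ in both regimes, so the blow-up $|\Theta(p/q+iy)| \asymp y^{-1/2}$ is genuine.

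Once this boundary asymptotic is in hand, I would transfer it back to $F$ by integrating along a short vertical contour: for small $h > 0$,
$$
F(x_0 + ih) - F(x_0) = \frac{i\pi}{2}\int_0^{h} \bigl(\Theta(x_0 + it) - 1\bigr)\, i\, dt,
$$
and the $y^{-1/2}$ divergence forces the modulus of this increment to be of order $\sqrt{h}$ rather than $h$. Rotating the contour back to the real axis (via Cauchy's theorem on $F$, using continuity up to the boundary) and passing to imaginary parts, one obtains a sequence $h_k \to 0$ with $|f(x_0+h_k) - f(x_0)|/|h_k|$ unbounded, exactly as in Hardy's $\Omega$-framework realized by Lemma \ref{HL6} and Proposition \ref{HP1}. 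This contradicts differentiability at $x_0$.

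The routine steps are the contour deformation from $\Theta$-asymptotics to $F$-asymptotics and the concluding $\Omega$-argument, both of which closely mirror the Hardy-type material already developed in Section 2. The delicate point, which I expect to be the main obstacle, is the modular bookkeeping that converts $p/q$ into a cusp: one must track the automorphy factor $\sqrt{-iz}$ and its branches, and verify that for the two stated parity classes the leading coefficient $G(p,q)/\sqrt{q}$ is genuinely nonzero and not cancelled by the subtracted constant $1$ in $\Theta - 1$. This is precisely the place where quadratic reciprocity enters, so getting the signs right is the real content of the proof.
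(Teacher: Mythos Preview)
The paper does not supply its own proof of this statement; it is quoted as Gerver's 1971 result. The closest thing to a proof in the paper is the machinery of Section~5: Theorem~\ref{thm1RZZ} and its boundary limit \eqref{eqnrec9-1} give the exact real-axis expansion
\[
F\Bigl(\tfrac{2q}{p}+h\Bigr)-F\Bigl(\tfrac{2q}{p}\Bigr)=S(p,q)\,\frac{e^{-\pi i\,\mathrm{sgn}(h)/4}}{p}\sqrt{|h|}-\tfrac{1}{2}h+O(h^{2}),
\]
from which non-differentiability at the points of the theorem is read off once $S(p,q)\ne 0$ (equivalently $R(p,q)\ne 0$ in Corollary~\ref{cor2recipro}) for the even/odd and odd/even parities. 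That argument proceeds by applying Euler--Maclaurin to each residue-class sum $\sum_{n}e^{(b+pn)^{2}i\mathfrak{z}}$, integrating \emph{horizontally} along $\mathrm{Im}\,z=\epsilon$, and then letting $\epsilon\to 0^{+}$.

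Your route is different, and the step you call ``rotating the contour back to the real axis via Cauchy's theorem on $F$'' is where the outline breaks. You integrate \emph{vertically} and obtain $F(x_{0}+ih)-F(x_{0})\asymp\sqrt{h}$, but this is not the increment that governs differentiability of $f$; Cauchy's theorem only says the increment of $F$ is path-independent, it does not convert a vertical increment into the horizontal one $F(x_{0}+h)-F(x_{0})$. To control the latter you need either a two-variable estimate of $\Theta(x_{0}+u+it)$ uniform as $(u,t)\to(0,0)$ --- precisely what the paper's horizontal integration supplies --- or a Hardy-type Tauberian link (Lemma~\ref{HL2} or Lemma~\ref{HL11}) saying that finite differentiability of the boundary function forces boundedness of the harmonic extension, which the $y^{-1/2}$ blow-up then contradicts. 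That second route is legitimate (it is Hardy's own argument sketched just before Theorem~\ref{HT5}), but it is not the contour-rotation mechanism you describe, and your references to Lemma~\ref{HL6} and Proposition~\ref{HP1} are off target since those treat lacunary sums $\sum b^{n\rho}e^{-b^{n}y}$, not the theta asymptotic. Once one has the sharp expansion \eqref{eqnrec9-1}, no $\Omega$-argument is needed at all: the $\sqrt{|h|}$ term is the leading term, not merely a lower bound along a subsequence.
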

This result together with Hardy's result \cite{H} that the function is not differentiable at any irrational multiple of $\pi$, completely solves the problem of differentiability.
 
In 1972, A. Smith \cite{Sm} extended the above results to the remaining cases. He also discussed the existence of finite left-hand and right-hand derivatives at certain rationals, and proved that these derivatives exist at all rationals if the
values $\pm \infty$ were allowed. He gave completely elementary and fairly short
proof of all the above assertions. J. Gerver's proof was extremely long and G. H. Hardy obtained his results indirectly. A. Smith worked with the following function
$$
g(x)=x+2 \sum_{n=1}^\infty \frac{\sin n^2 \pi x}{\pi n^2}, 
$$
so that, one can verify that $g'(x)$ exists and is zero whenever $x$ is of the form $\frac{2A+1}{2B+1}$ for some integers $A$ and $B$.

The following lemmas are required to obtain expansions for $g(x)$ about a rational point $x$, which using properties of Gaussian sums reveal the properties of the derivatives.
\begin{lemma}\label{SL1}
Let $\phi$ be a continuous function in $L_1(-\infty, \infty)$. Suppose that the series for $Q(\alpha)$ (defined below) converges uniformly
 in every finite $\alpha$ interval, for each fixed $h >0$. Let
 $$ 
 \hat{\phi}(y)=\int_{-\infty}^\infty e^{-2\pi i xy} \phi (x)dx
 $$
 and assume that $|y|^\beta |\hat{\phi}(y)|$ is bounded for some fixed $\beta >1$.
 Then for any real constant $\alpha$, as $h\to 0+$,
$$
Q(\alpha)= \sum_{k=-\infty}^\infty h\phi(hk + h\alpha) =\hat{\phi}(0) + O(h^\beta).
$$
\end{lemma}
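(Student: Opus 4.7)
The plan is to recognize $Q(\alpha)$ as a $1$-periodic function of $\alpha$ whose Fourier expansion is essentially the Poisson summation identity for $\phi$, and then to read off the main term and the error bound from the assumed decay of $\hat{\phi}$.

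First I would observe that the series
$$
Q(\alpha)=\sum_{k\in\mathbb{Z}}h\,\phi\bigl(h(k+\alpha)\bigr)
$$
is periodic in $\alpha$ of period $1$, since the substitution $\alpha\mapsto\alpha+1$ merely reindexes the sum. The hypothesis that the series converges uniformly on every bounded $\alpha$-interval, together with the continuity of $\phi$, ensures that $Q$ is a continuous periodic function of $\alpha$.

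Next I would compute the $n$-th Fourier coefficient of $Q$. Interchanging the sum and the integral (justified by the assumed uniform convergence) and then unfolding the sum via the substitution $u=h(k+\alpha)$ yields
$$
\int_{0}^{1}Q(\alpha)\,e^{-2\pi in\alpha}\,d\alpha=\int_{-\infty}^{\infty}\phi(u)\,e^{-2\pi inu/h}\,du=\hat{\phi}(n/h).
$$
Because $|\hat{\phi}(y)|\leq M|y|^{-\beta}$ with $\beta>1$, the Fourier series $\sum_{n\in\mathbb{Z}}\hat{\phi}(n/h)\,e^{2\pi in\alpha}$ converges absolutely and uniformly in $\alpha$, and therefore coincides with the continuous function $Q(\alpha)$ pointwise.

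Isolating the $n=0$ term gives $Q(\alpha)-\hat{\phi}(0)=\sum_{n\ne 0}\hat{\phi}(n/h)\,e^{2\pi in\alpha}$, and the decay hypothesis yields
$$
\bigl|Q(\alpha)-\hat{\phi}(0)\bigr|\leq M\sum_{n\ne 0}\Bigl(\frac{h}{|n|}\Bigr)^{\beta}=2M\zeta(\beta)\,h^{\beta}=O(h^{\beta}),
$$
uniformly in $\alpha$, which is the desired conclusion. The main obstacle I anticipate is the bookkeeping in the Fourier-coefficient computation, specifically the interchange of summation and integration and the pointwise identification of $Q$ with its Fourier series; both are handled cleanly by the uniform convergence built into the hypotheses together with the decay of $\hat{\phi}$, so no deeper analytic input is required.
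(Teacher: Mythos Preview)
Your proof is correct and is essentially the paper's own argument: both obtain the identity $Q(\alpha)=\sum_{n\in\mathbb{Z}}\hat{\phi}(n/h)e^{2\pi in\alpha}$ (the paper simply cites Poisson summation, while you derive it by computing Fourier coefficients of the periodization and unfolding), then separate the $n=0$ term and bound the tail by $O(h^{\beta})$ using the decay hypothesis on $\hat{\phi}$. There is no substantive difference in strategy.
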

\begin{proof}
The conditions on $\phi$ allows one to apply the Poisson summation formula to 
$$
\sum_{k=-\infty}^\infty h\phi(hk + h\alpha)
$$ 
to obtain
$$
\sum_{k=-\infty}^\infty h\phi(hk + h\alpha)=\sum_{k=-\infty}^\infty e^{2\pi i k \alpha}\hat{\phi}\bigg(\frac{k}{n}\bigg)
$$
provided this series converges absolutely.
The condition on $\hat{\phi}$ gives, for $k\ne 0$,
$$
e^{2\pi i k \alpha}\hat{\phi}\bigg(\frac{k}{n}\bigg)=O\bigg(\frac{h^\beta}{|k|^\beta}\bigg)
$$
 which shows that the above sum, leaving out the $k=0$ term, converges
 absolutely and is $O(h^\beta)$. Thus
$$
\sum_{k=-\infty}^\infty e^{2\pi i k \alpha}\hat{\phi}\bigg(\frac{k}{n}\bigg)=\hat{\phi}(0)+O(h^\beta).
$$
\end{proof}
\begin{lemma}\label{SL2}
Let
$$
\phi_1 (x) = 
 \begin{cases}
 \frac{\sin \pi x}{\pi x},\qquad x\ne 0,\\
1, \qquad x= 0,
\end{cases}
$$
$$
\phi_2 (x) =
 \begin{cases}
 \frac{1-\cos \pi x}{\pi x},\qquad x\ne 0,\\
0, \qquad x= 0.
\end{cases}
$$
 Then Lemma \ref{SL1} with $\beta=2$ applies to the functions $\psi_i(x)=\psi_i(x^2),\ i= 1, 2$,
 and
 $$\sum_{k=-\infty}^\infty h\psi(hk + h\alpha) = 2^{1/2} + O(h^2),\ i = 1,2.$$ \end{lemma}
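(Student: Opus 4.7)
The plan is to verify each hypothesis of Lemma~\ref{SL1} for the functions $\psi_i(x)=\phi_i(x^2)$ with $\beta=2$, and then to evaluate $\hat\psi_i(0)$.

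\emph{Hypotheses of Lemma~\ref{SL1}.} Both $\psi_i$ are continuous on $\mathbb{R}$: the apparent singularity at $x=0$ is removable since $\psi_1(x)\to 1$ and $\psi_2(x)\to 0$. For $|x|$ large, $|\psi_i(x)|\le 1/(\pi x^2)$, so $\psi_i\in L^1(\mathbb{R})$, and the same bound produces, for fixed $h>0$ and $|\alpha|\le M$, a majorant proportional to $\sum 1/k^2$ for the tail of $\sum_k h\,\psi_i(hk+h\alpha)$, giving uniform convergence on bounded $\alpha$-intervals.

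\emph{The decay estimate.} The key step will be $|y|^2|\hat\psi_i(y)|\le C$. I would begin from
\begin{equation*}
\psi_1(x)=\int_0^1\cos(\pi t x^2)\,dt, \qquad \psi_2(x)=\int_0^1\sin(\pi t x^2)\,dt,
\end{equation*}
and use the Fresnel identity $\int_{-\infty}^{\infty}e^{i\pi t u^2}\,du=e^{i\pi/4}/\sqrt{t}$ together with completion of the square to obtain
\begin{equation*}
\widehat{\cos(\pi t x^2)}(y)=\frac{\cos(\pi/4-\pi y^2/t)}{\sqrt{t}}, \qquad \widehat{\sin(\pi t x^2)}(y)=\frac{\sin(\pi/4-\pi y^2/t)}{\sqrt{t}}.
\end{equation*}
Integrating over $t\in(0,1)$ and substituting $u=\pi y^2/t$ converts $\hat\psi_1(y)$ to $\sqrt{\pi}\,|y|\int_{\pi y^2}^{\infty}u^{-3/2}\cos(\pi/4-u)\,du$, with a parallel expression for $\hat\psi_2$. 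A single integration by parts (antidifferentiating the trigonometric factor and differentiating $u^{-3/2}$) produces a boundary contribution at $u=\pi y^2$ of size $|y|^{-3}$ and a tail bounded by $\tfrac{3}{2}\int_{\pi y^2}^{\infty}u^{-5/2}\,du=O(|y|^{-3})$; multiplying by the prefactor $\sqrt{\pi}\,|y|$ yields $\hat\psi_i(y)=O(|y|^{-2})$ for $|y|$ large, while continuity of $\hat\psi_i$ handles a neighbourhood of the origin.

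\emph{Evaluation at zero.} The integral $\hat\psi_i(0)=\int_{-\infty}^{\infty}\psi_i(x)\,dx$ reduces, via $u=\pi x^2$, to $\pi^{-1/2}\int_0^{\infty}u^{-3/2}\sin u\,du$ and $\pi^{-1/2}\int_0^{\infty}u^{-3/2}(1-\cos u)\,du$ respectively. The classical Mellin identities $\int_0^{\infty}u^{s-1}\sin u\,du=\Gamma(s)\sin(\pi s/2)$ and $\int_0^{\infty}u^{s-1}(1-\cos u)\,du=-\Gamma(s)\cos(\pi s/2)$ (valid on $-2<\operatorname{Re}s<0$), evaluated at $s=-1/2$ with $\Gamma(-1/2)=-2\sqrt{\pi}$, both produce $\sqrt{2\pi}$. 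Hence $\hat\psi_i(0)=\sqrt{2}$, and invoking Lemma~\ref{SL1} delivers the stated expansion.

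The main obstacle will be the Fourier-decay estimate: a naive double integration by parts fails because $\psi_i'(x)$ behaves like $\cos(\pi x^2)/x$ at infinity and is only conditionally integrable. The parameter representation above bypasses this difficulty by converting the task into an explicit oscillatory integral whose stationary phase lies outside the integration range $[\pi y^2,\infty)$, so a single integration by parts suffices to extract the needed $|y|^{-2}$ decay.
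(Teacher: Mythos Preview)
The paper does not supply a proof of this lemma: it is stated as part of the survey of Smith's argument and immediately followed by Lemma~\ref{SL3}, so there is nothing to compare against. Your write-up therefore fills a genuine gap, and the overall strategy---verify the $L^1$ and uniform-convergence hypotheses, establish $\hat\psi_i(y)=O(|y|^{-2})$ via the parameter representation $\psi_i(x)=\int_0^1 e^{\pm i\pi t x^2}\,dt$ and a Fresnel/stationary-phase computation, then evaluate $\hat\psi_i(0)$ with Mellin integrals---is sound and yields exactly the stated conclusion $\hat\psi_i(0)=\sqrt{2}$.

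One point deserves a sentence of justification. When you pass from $\hat\psi_i(y)=\int_{\mathbb R}e^{-2\pi ixy}\bigl(\int_0^1\cdots\,dt\bigr)dx$ to $\int_0^1\bigl(\int_{\mathbb R}\cdots\,dx\bigr)dt$, Fubini does not apply directly because the double integral is not absolutely convergent (the inner Fresnel integral is only conditionally convergent). The standard remedy---insert a convergence factor $e^{-\varepsilon x^2}$, exchange by Fubini on the now absolutely convergent integral, and let $\varepsilon\to 0^+$ using dominated convergence on the $t$-integral---goes through without difficulty, but you should say so. With that addition the argument is complete.
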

The following lemma is straight forward.
\begin{lemma}\label{SL3}
Assume that $ x=\frac{r}{s}$ and that $\ (r, s)=1$. Let us define
$$ 
G(x) =\sum_{t=0}^{s-1} e^{i\pi t^2 x} = C(x) + iS(x)\equiv\sum_{t=0}^{s-1} \cos \pi t^2 x + i\sum_{t=0}^{s-1}\sin \pi t^2 x;
$$ 
then\\
(a) when $r\equiv\pmod 2,$
$$
G(x) =
\begin{cases}
(\frac{r}{2s})s^{1/2}=1,\ \ s \equiv 1\pmod 4,\\
i(\frac{r}{2s})s^{1/2}=i, \ \ s\equiv 3 \pmod 4;
\end{cases}
$$ 
(b) when $s\equiv\pmod 2,$
$$
G(x) =
\begin{cases}
(\frac{r}{2s})\sqrt{\frac{s}{2}}(1+i),\ \ r \equiv 1\pmod 4,\\
(\frac{r}{2s})\sqrt{\frac{s}{2}}(1-i), \ \ r\equiv 3 \pmod 4;
\end{cases}
$$ 
where $(\frac{a}{b})$ denotes the Jacobi symbol;
\\
(c) when $rs=0\pmod 2$,
$$
|G(x)|=s^{1/2}.
$$
\end{lemma}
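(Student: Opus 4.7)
Writing $e^{i\pi t^2 r/s}=e^{2\pi i r t^2/(2s)}$, the sum
\[
G(r/s) \;=\; \sum_{t=0}^{s-1} e^{2\pi i r t^2/(2s)}
\]
is recognised as a classical quadratic Gauss sum. The main tool I would use is the Landsberg--Schaar reciprocity
\[
\sum_{t=0}^{d-1} e^{i\pi c t^2/d} \;=\; \sqrt{\tfrac{d}{c}}\;e^{i\pi/4}\sum_{t=0}^{c-1} e^{-i\pi d t^2/c}, \qquad c,d\ge 1,\ cd\text{ even},
\]
together with Gauss's classical evaluation
\[
\sum_{t=0}^{n-1} e^{2\pi i a t^2/n} \;=\; \Bigl(\tfrac{a}{n}\Bigr)\,\varepsilon_n\,\sqrt{n},\qquad \varepsilon_n = \begin{cases}1,& n\equiv 1\pmod 4,\\ i,& n\equiv 3\pmod 4,\end{cases}
\]
valid for odd $n$ with $\gcd(a,n)=1$. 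Schaar's reciprocity is exactly the specialisation of the theta transformation $\Theta(-1/\tau)=\sqrt{\tau/i}\,\Theta(\tau)$ at $\tau=c/d+i\delta$ with $\delta\downarrow 0$, which keeps this lemma within the unifying framework of the paper.

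In case (a), writing $r=2r'$ with $s$ odd and $\gcd(r',s)=1$ gives
\[
G(r/s) \;=\; \sum_{t=0}^{s-1} e^{2\pi i r' t^2/s} \;=\; \Bigl(\tfrac{r'}{s}\Bigr)\,\varepsilon_s\,\sqrt{s}
\]
directly from Gauss's formula; one then identifies $\bigl(\tfrac{r'}{s}\bigr)$ with the Kronecker symbol $\bigl(\tfrac{r}{2s}\bigr)$ of the statement. In case (b), $s$ is even and $r$ is odd; Schaar's identity with $(c,d)=(r,s)$ gives
\[
G(r/s) \;=\; \sqrt{\tfrac{s}{r}}\,e^{i\pi/4}\sum_{t=0}^{r-1} e^{-i\pi s t^2/r} \;=\; \sqrt{\tfrac{s}{r}}\,e^{i\pi/4}\,\overline{\sum_{t=0}^{r-1} e^{2\pi i (s/2) t^2/r}}.
\]
The inner sum has odd denominator $r$ and is evaluated by Gauss's formula; combining $e^{i\pi/4}$ with $\overline{\varepsilon_r}$ produces the factor $\tfrac{1\pm i}{\sqrt{2}}$ according as $r\equiv 1$ or $3\pmod 4$, which together with $\sqrt{s/r}\cdot\sqrt{r}=\sqrt{s}$ yields the claimed $\sqrt{s/2}(1\pm i)$. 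Case (c) is then immediate: since $\gcd(r,s)=1$ and $rs$ is even, exactly one of $r,s$ is even, so we are in (a) or (b), and in both cases the Jacobi/Kronecker symbol has modulus one, giving $|G(x)|=\sqrt{s}$.

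The main obstacle will not be the Gauss-sum evaluation itself, but the Jacobi/Kronecker symbol bookkeeping needed to repackage the symbols $\bigl(\tfrac{r'}{s}\bigr)$ and $\bigl(\tfrac{s/2}{r}\bigr)$ arising in cases (a) and (b) into the single symbol $\bigl(\tfrac{r}{2s}\bigr)$ used in the statement. This requires the Kronecker extension $\bigl(\tfrac{r}{2}\bigr)=\pm 1$ determined by $r\bmod 8$, and a use of quadratic reciprocity for the Jacobi symbol to flip the base; the latter is precisely the theorem the paper is building towards, which makes the circle of ideas particularly satisfying.
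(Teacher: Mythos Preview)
The paper itself gives no proof of this lemma; it simply declares it ``straight forward'' and moves on, since these are classical Gauss-sum evaluations quoted from Smith's 1972 paper. Your approach via Gauss's formula for odd modulus (case (a)) and Landsberg--Schaar reciprocity reducing to odd modulus (case (b)) is exactly the standard route and is correct.

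Your one worry, however, is misplaced, and it stems from a typo in the survey's transcription of Smith. The symbol printed as $\bigl(\tfrac{r}{2s}\bigr)$ in both (a) and (b) cannot be right: in case (a) $r$ is even, so as a Kronecker symbol $\bigl(\tfrac{r}{2s}\bigr)=\bigl(\tfrac{r}{2}\bigr)\bigl(\tfrac{r}{s}\bigr)=0$, which would force $G(x)=0$ and contradict (c). In Smith's original Lemma~3 the symbols are $\bigl(\tfrac{r/2}{s}\bigr)$ in (a) and $\bigl(\tfrac{s/2}{r}\bigr)$ in (b) --- precisely what your computation produces directly. (The stray ``$=1$'' and ``$=i$'' in the displayed cases of (a) are likewise transcription artefacts.) So no Kronecker-symbol gymnastics and no appeal to quadratic reciprocity are needed at all; your derivation already lands on the correct target. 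This also dissolves the potential circularity you flagged: the lemma, as actually stated by Smith, requires only Gauss's evaluation and Landsberg--Schaar, both of which are inputs in this expository section rather than outputs of the paper's later argument.
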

We are now in a position to discuss the derivative of $g(x)$ at rational and at some other points.
\subsection{The derivative at rational points}
We begin with the following assumptions:
$$  
x=\frac{r}{s},\ (r,\ s)= 1,\ rs\equiv 0\pmod 2.
$$
We have
\begin{eqnarray*}
g(x + h^2) + g(x - h^2) &=& 2x + 4 \sum_{n=1}^{\infty}\frac{\sin \pi n^2 x}{\pi n^2} \cos \pi n^2 h^2\\
&=& 2g(x) - 2h^2\sum_{n=-\infty}^\infty \sin\pi n^2x \psi_2(ph).
\end{eqnarray*}
Let us write $n=ks+t$ with $0\leq t\leq s-1$. Note that $\sin\pi(ks+t)^2x=\sin \pi t^2x$, since
 $rs\equiv 0 \pmod 2$. Then
\begin{eqnarray*}
g(x + h^2) + g(x - h^2) &=& 2g(x) - 2h^2\sum_{t=0}^{s-1}\sum_{k=-\infty}^\infty \sin\pi t^2x \psi_2(khs+ht)\\
&=& 2g(x) - 2\frac{h}{s}\sum_{t=0}^{s-1} \sin\pi t^2x \{2^{1/2}+O(h^2)\}\\
&=& 2g(x) - 2^{3/2}S(x)\frac{h}{s}+O(h^3).
\end{eqnarray*}
Note that Lemma \ref{SL2} is used in the penultimate line.\\ 
Similarly, we have
$$
g(x + h^2) -g(x - h^2) = 2^{3/2}C(x)\frac{h}{s}+O(h^3).
$$
Adding and subtracting these two equations, we obtain
\begin{equation}\label{SE1}
g(x \pm h^2) = g(x) - 2^{1}{2}\{S(x)\mp  C(x)\}\frac{h}{s} + O(h^3)
\end{equation}
We now assume that $rs\equiv 1 \pmod 2$. One can easily verify the relation 
$$
g(x)= 1 + \frac{1}{2}g(4x)-g(x+ 1)
$$
which is then used in (\ref{SE1}) to deduce that
\begin{equation}\label{SE2}
 g(x \pm h^2) = g(x) -2^{1/2}\{S(4x)- S(x + 1) \mp [C(4x) - C(x + 1)]\}\frac{h}{s} + O(h^3).
\end{equation} 
The properties of Jacobi symbols provide
$$
 \bigg(\frac{2r}{s}\bigg)=\bigg(\frac{2r + 2s}{s}\bigg)=\bigg(\frac{4((r + s)/2)}{s}\bigg)=\bigg(\frac{(r + s)/2}{s}\bigg),
$$
since $4$ is the square of the prime $2$ and $s\equiv 1\pmod 2$.
This immediately simplifies (\ref{SE2}) to
$$
g(x\pm h^ 2) =g(x) + O(h^3).
$$
Thus when $r\equiv s\equiv 1 \pmod 2$ we see that  $g'(x)$ exists and is $0$, since the right-hand derivative
$$
 g'_{+}(x)=\lim_{h^2\to \infty}\frac{g(x + h^2) - g(x)}{h^2}
 $$
and the left-hand derivative
$$
 g'_{-}(x)=\lim_{h^2\to \infty}\frac{g(x) - g(x-h^2)}{h^2}
 $$
both exist and are $0$. In this case, it follows that the symmetric derivative
 $$
 g'_{0}(x)=\lim_{h^2\to \infty}\frac{g(x + h^2) - g(x-h^2)}{2h^2}
 $$
also exists and is $0$. When $rs\equiv 0 \pmod 2$, the relation (\ref{SE1}) shows that $g'(x)$ is finite if and only if $G(x)=0$. However, by Lemma \ref{SL3}, $G(x)$ is not $0$. Hence $g'(x)$ is not finite when $rs$ even. One can easily verify that $g_{+}(x)$ when $r\equiv 1\pmod 4,\ g'_{-} (x)$ when $r\equiv 3 \pmod 4$, and $g_{0} (x)$ when $s\equiv 3 \pmod 4$ are all $0$, but in other cases these derivatives are infinite.
\subsection{Derivatives at other points}
At negative rationals the results of the preceding section carry over, since $g$ is an odd function.
 
We now assume that $x$ is irrational, which without loss generality we take to be positive. 
 Let $\{q_k\}$ be a strictly increasing sequence of positive integers, and let
$p_k$ be the least integer such that $x_k=\frac{2p_k}{4q_k+1}>x$. Then 
$x_k-x<\frac{2}{4q_k + 1}$ and $x_k\to x$ as $k\to \infty$. From (\ref{SE1}) and condition (a) of Lemma \ref{SL3}, we have
$$
\bigg| \frac{g(x)-g(x_k)}{x-x_k}\bigg|= \bigg\{\frac{1}{2}(4q_k + 1)(x_k - x)\bigg\}^{-1/2} + O((x_k-x)^{1/2} ).
$$
Therefore,
$$
\lim_{k\to \infty }\inf \bigg| \frac{g(x)-g(x_k)}{x-x_k}\bigg|\geq 1.
$$
Let $y_k=x_k+ \frac{1}{4q_k+ 1} = \frac{2p_k+ 1}{4q_k+ 1}$. Then $y_k\to x$ as $k\to \infty$ and 
 $$
 \lim_{k\to \infty } \frac{g(x)-g(y_k)}{x-y_k}=0.
 $$
From these two equations, we obtain Hardy's result that $g$ does not have a finite or infinite derivative at the irrational point $x$.

In 1981, S. Itatsu \cite{It} gave a short proof of the differentiability
as well as a finer estimate of the function 
$$
f(x)=\sum_{n=1}^\infty\frac{\sin n^2x}{n^2}
$$ 
at points of rational multiple of $\pi$. Namely, he proved the following result. 
\begin{theorem}{\label{IT1}}
The function
$$
F(x)=\sum_{n=1}^\infty \frac{e^{in^2\pi x}}{in^2\pi}
$$
have the following behaviour near $x=\frac{q}{p}$, where $p$ is a positive integer
and $q$ is an integer such that $\frac{q}{p}$ is an irreducible fraction,
$$
F(x+h)-F(x)=R(p,q)p^{-1/2}e^{\frac{i\pi}{4}sgn\ h}|h|^{1/2} sgn\ h-\frac{h}{2}+O(|h|^{1/2})
$$
as $h\to 0$ where $sgn\ h=\frac{h}{|h|}$ if $h\ne 0$, $sgn\ h=0$ if $h=0$, and $R(p, q)$ is a constant defined by
$$
R(p, q)=
\begin{cases}
(\frac{q}{p})e^{\frac{-\pi i}{4}(p-1)},\ \ \text{if}\ p\ \text{is odd and}\ q\ \text{even},\\
(\frac{p}{|q|})e^{\frac{\pi i}{4}q},\ \ \text{if}\ p\ \text{is even and}\ q\ \text{odd},\\
0,\ \  \text{if} \ p \ and\  q \text{ are odd},
\end{cases}
$$
with the Jacobi's symbol $(\frac{p}{q})$.
\end{theorem}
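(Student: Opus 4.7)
The approach is to express $F(x+h)-F(x)$ as the integral of a theta function along a horizontal line in the upper half plane, use the theta transformation to extract the leading singularity at $q/p$, and read off $R(p,q)$ from the resulting Gauss sum. For $\epsilon>0$ the regularized series $\sum_{n\geq 1}e^{in^2\pi(x+s)-\epsilon n^2}$ converges absolutely and equals $\tfrac{1}{2}(\theta(x+s+i\epsilon/\pi)-1)$, where $\theta(\tau)=\sum_{n\in\mathbb{Z}}e^{i\pi n^2\tau}$. Since $|e^{in^2\pi h}-1|/(n^2\pi)$ is dominated by $2/(n^2\pi)$, term-by-term integration together with dominated convergence yields
\begin{equation*}
F(x+h)-F(x) \;=\; \lim_{\epsilon\to 0^+}\int_0^h \frac{\theta(x+s+i\epsilon/\pi)-1}{2}\,ds.
\end{equation*}

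Next, at $x=q/p$ I would sort $n$ modulo a suitable integer $N$ (depending on the parity of $p$ and $q$) chosen so that $e^{i\pi n^2 q/p}$ depends only on $n\bmod N$. Writing $n=Nm+r$ and applying Poisson summation, equivalently the theta transformation $\theta(-1/\tau)=\sqrt{-i\tau}\,\theta(\tau)$, to each residue class gives, uniformly in $r$ as $u\to 0$ in the upper half plane,
\begin{equation*}
\sum_{m\in\mathbb{Z}} e^{i\pi(Nm+r)^2 u} \;=\; \frac{1}{N\sqrt{-iu}}\bigl(1+O(e^{-c/|u|})\bigr).
\end{equation*}
Summing over $r$ extracts the Gauss sum $G(p,q):=\sum_{r\bmod N}e^{i\pi r^2 q/p}$, so that $\theta(q/p+u)=G(p,q)/(N\sqrt{-iu})+O(1)$. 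Substituting into the integral, the $-1$ in $\theta-1$ contributes exactly $-h/2$, while the singular part yields
\begin{equation*}
\frac{G(p,q)}{2N}\int_0^h \frac{ds}{\sqrt{-is}} \;=\; \frac{G(p,q)}{N}\,e^{i\pi\,\mathrm{sgn}(h)/4}\,|h|^{1/2}\,\mathrm{sgn}(h).
\end{equation*}
The classical Gauss sum evaluation in Lemma \ref{SL3} then identifies $G(p,q)/N=R(p,q)/\sqrt{p}$ in the three parity cases, and in the case $p\equiv q\equiv 1\pmod 2$ produces the cancellation that forces $R(p,q)=0$.

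The main obstacle is the uniform control of the error. Concretely, one must show that the integral $\int_0^h\!\bigl[\theta(x+s+i\epsilon/\pi)-G(p,q)/(N\sqrt{-i(s+i\epsilon/\pi)})\bigr]\,ds$ is $O(|h|^{1/2})$ with constants independent of $\epsilon>0$, and that the limit $\epsilon\to 0^+$ may be interchanged with $ds$-integration despite the $s^{-1/2}$ singularity at the endpoint. This requires writing the remainder of the Poisson dual as an exponentially small tail away from $u=0$, applying an Abel/stationary-phase type bound near $s=0$, and choosing the branch of $\sqrt{-iu}$ carefully so that the phase $e^{i\pi\,\mathrm{sgn}(h)/4}$ emerges consistently on both sides of $h=0$. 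This technical step is where Itatsu's argument essentially packages the reciprocity computation inherent in the Gauss-sum evaluation of $G(p,q)$.
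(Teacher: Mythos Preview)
The paper does not give its own proof of Theorem~\ref{IT1}; it is quoted as Itatsu's result. The paper's substitute is Theorem~\ref{thm1RZZ}, which it explicitly identifies as ``essentially Theorem~1 of S.~Itatsu,'' and your plan coincides with that proof almost exactly: decompose the theta sum into residue classes modulo $p$, apply a summation formula to each class (the paper invokes Euler--Maclaurin via \cite[Lemma~4]{RM}, you invoke Poisson summation---the output is the same asymptotic $\frac{1}{p\sqrt{-iu}}+O(1)$), integrate along a horizontal segment to pass from $\Theta$ to $F$, and read off the Gauss sum as the coefficient of $|h|^{1/2}$. The one methodological difference worth noting is that the paper works throughout with the complex increment $\mathfrak{z}=h+i\epsilon\in\mathcal{H}$ and lets $\epsilon\to 0^+$ only at the very end (see \eqref{eqnrec9-1}); this keeps every series absolutely convergent during the computation and thereby sidesteps precisely the interchange-of-limits obstacle you flag as the main difficulty. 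The identification of $R(p,q)$ in the three parity cases is carried out in Corollary~\ref{cor2recipro} via the reciprocity relation \eqref{eqnrec14-5}, rather than by direct appeal to the Gauss-sum table in Lemma~\ref{SL3} as you propose; both routes arrive at the same values.
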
 
\section{Quadratic reciprocity and Riemann's function}
Here we discuss the recent work of Chakraborty et.al \cite{K1} who gave a combined proof of both; that is the quadratic reciprocity law as well as the differentiability/non-differentiability of Riemann's function. 

Let $p$ be a natural number and $\mathfrak{z}=h+i\epsilon \in \mathcal{H}$  tending to $0$. We denote the upper half-plane by $\mathcal{H}$. Also let 
for $z \in \mathcal{H}\cup \mathbb{R}$,
\begin{equation*}\label{eqnrec5}
F(z)=\sum_{n=1}^{\infty}\frac{e^{\pi in^2z}}{\pi in^2}=\frac{1}{2}\sum_{\substack{n=-\infty\\  n\ne 0}}^{\infty}\frac{e^{\pi in^2z}}{\pi in^2}.
\end{equation*}
Let us denote by  $S(b,a)$  the quadratic Gauss sum defined by
\begin{equation*}\label{eqnrec8}
S(b,a)=\sum_{j=0}^{b-1}e^{2\pi i j^2 \frac{a}{b}}
\end{equation*}
for a natural number $b$.  One extends the definition for non-zero integral values  $b$ by,
\begin{equation*}\label{eqnrec8-1}
S(b,a)=S(|b|,\mathop{\rm sgn} (b)a).
\end{equation*}
We note that $S(|b|,-a)=\overline{S(|b|,a)}$ and $S(ka,kb)=S(a,b)$. 

We begin with the following result:
\begin{theorem}\label{thm1RZZ}
 For any integers $p>0$, $q$ we have
\begin{equation}\label{eqnrec9}
F\left(\frac{2q}{p}+\mathfrak{z} \right)-F\left(\frac{2q}{p}+i\epsilon \right)
=S(p,q)\frac{e^{-\pi i/4}}{p}\sqrt{\mathfrak{z}}-\frac{1}{2}h+O(\mathfrak{z}^2)
\end{equation}
where for a non-zero integer $p$, the coefficient is to be understood as 
$S(|p|,\mathop{\rm sgn}(p)q)$.
\end{theorem}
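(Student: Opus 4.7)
The strategy is to reduce \eqref{eqnrec9} to the local behaviour at the cusp $2q/p$ of the Jacobi theta function $\Theta(z)=\sum_{n\in\mathbb{Z}}e^{\pi i n^{2} z}$, derive that local behaviour by Poisson summation, and then integrate horizontally in $\mathcal{H}$. Termwise differentiation of the defining series of $F$ gives $F'(z)=\tfrac{1}{2}(\Theta(z)-1)$, and because $F$ extends continuously to $\mathcal{H}\cup\mathbb{R}$ we may write
$$F\!\left(\tfrac{2q}{p}+\mathfrak{z}\right)-F\!\left(\tfrac{2q}{p}+i\epsilon\right)=\tfrac{1}{2}\int_{i\epsilon}^{\mathfrak{z}}\!\!\left(\Theta\!\left(\tfrac{2q}{p}+w\right)-1\right)dw,$$
where the contour is the horizontal segment from $i\epsilon$ to $\mathfrak{z}=h+i\epsilon$. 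The $-1$ piece integrates to $-h/2$, accounting for the linear term in the statement.

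Next, I would expand $\Theta(2q/p+w)$ for small $w\in\mathcal{H}$. Writing $n=pm+j$ with $0\le j<p$, the congruence $(pm+j)^{2}\cdot 2q/p\equiv 2qj^{2}/p\pmod{2}$ lets one pull out $e^{2\pi iqj^{2}/p}$. Poisson summation on the inner $m$-sum, combined with the Gaussian identity $\int_{\mathbb{R}}e^{\pi iwu^{2}-2\pi iku}\,du=(-iw)^{-1/2}e^{-\pi ik^{2}/w}$ (valid for $w\in\mathcal{H}$, principal branch), produces
$$\Theta\!\left(\tfrac{2q}{p}+w\right)=\frac{1}{p\sqrt{-iw}}\sum_{m\in\mathbb{Z}}e^{-\pi im^{2}/(p^{2}w)}\sum_{j=0}^{p-1}e^{2\pi i(qj^{2}+mj)/p}.$$
The $m=0$ Fourier coefficient is exactly the Gauss sum $S(p,q)$; for $m\neq 0$ one has $|e^{-\pi im^{2}/(p^{2}w)}|=\exp(-\pi m^{2}\operatorname{Im}(-1/w)/p^{2})$, which is exponentially small deep inside $\mathcal{H}$.

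Substituting this expansion into the contour integral, the $m=0$ piece, with the antiderivative $\int(-iw)^{-1/2}\,dw=2i\sqrt{-iw}$ and the principal-branch identity $\sqrt{-i\mathfrak{z}}=e^{-i\pi/4}\sqrt{\mathfrak{z}}$, yields, after combining with the boundary value at $w=i\epsilon$, the announced main contribution $S(p,q)e^{-i\pi/4}\sqrt{\mathfrak{z}}/p$. The $m\neq 0$ contributions, after the change of variable $u=-1/(p^{2}w)$, become rapidly convergent theta-type integrals whose total is absorbed into the error $O(\mathfrak{z}^{2})$.

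The main obstacle, in my view, is the uniform control of the $m\neq 0$ remainders as $\operatorname{Im}w\to 0$ off the imaginary axis: the pointwise decay $e^{-\pi m^{2}\operatorname{Im}(-1/w)/p^{2}}$ degenerates there, and one must lean on oscillatory cancellation in the horizontal integral to recover the required smallness. This is precisely the advantage of working with the integrated form $F$, as emphasised in the introduction: horizontal integration allows estimates to extend to the real line, which would not be possible for $\Theta$ itself. Once this estimate is in place, the remaining bookkeeping (branch of the square root, matching of the $\sqrt{\epsilon}$ boundary contribution, and the interpretation of $S(p,q)$ for integer $p$ of either sign) is routine.
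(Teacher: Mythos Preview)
Your proposal is correct and follows essentially the same route as the paper: decompose into residue classes modulo $p$, obtain the local expansion of the theta-type sum near the cusp (you via Poisson summation, the paper via Euler--Maclaurin as in \cite{RM}, Lemma~4), and integrate along the horizontal segment from $i\epsilon$ to $\mathfrak{z}=h+i\epsilon$. The only cosmetic difference is the order of operations---the paper first integrates each residue-class sum to a function $T(\mathfrak{z},b)$ and then sums over $b$, whereas you assemble $\Theta$ first and integrate afterwards---and your explicit identification of the $m\neq 0$ remainder as the delicate point is exactly what the paper sweeps into the $O(\mathfrak{z})$ of \eqref{eqnrec1}.
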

\begin{proof}
Let $b$ be an arbitrary real number.
 One can obtain by using  Euler-Maclaurin summation formula as in Lemma 4 in \cite{RM} (the resulting integral  can be evaluated as in \cite{KT} (Page 20--22)):
\begin{equation}\label{eqnrec1}
\sum_{n=-\infty}^{\infty}e^{{(b+pn)}^2i\mathfrak{z}}=
\frac{2\sqrt{\pi}}{p}e^{-\pi i/4}\sqrt{\mathfrak{z}}+O(\mathfrak{z})
\end{equation}
where the branch of $\sqrt{\mathfrak{z}}$ is chosen so that it is positive for $\mathfrak{z}>0$.

We integrate this along the line segment parallel to the real axis, say over $[\mathfrak{z}^\prime,\mathfrak{z}]$ with $\mathfrak{z}-\mathfrak{z}^{\prime}=h$. Now after separating the case $(b,n)=(0,0)$ the integrated form of \eqref{eqnrec1} becomes,
\begin{eqnarray}\label{eqnrec2}
h &+&\sum_{\substack{n=-\infty\\  (n,b)\ne (0,0)}}^{\infty}\frac{e^{{(b+pn)}^2i\mathfrak{z}}}{i{(b+pn)}^2} - \sum_{\substack{n=-\infty\\  (n,b)\ne (0,0)}}^{\infty}\frac{e^{{(b+pn)}^2i(h^\prime+i\epsilon)}}{i{(b+pn)}^2}\nonumber\\
&=&\frac{2\sqrt{\pi}}{p}e^{-\pi i/4}\sqrt{\mathfrak{z}}+O(\mathfrak{z}^2).
\end{eqnarray}
This can be re-written as
\begin{equation}\label{eqnrec3}
T(\mathfrak{z})-T(i\epsilon)
=\frac{2\sqrt{\pi}}{p}e^{-\pi i/4}\sqrt{\mathfrak{z}}-h(1+o(1))+O(\mathfrak{z}^2).
\end{equation}
Here
\begin{equation*}\label{eqnrec4}
T(\mathfrak{z})=T(\mathfrak{z},b)=\sum_{\substack{n=-\infty\\  (n,b)\ne (0,0)}}^{\infty}\frac{e^{{(b+pn)}^2i\mathfrak{z}}}{i{(b+pn)}^2}.
\end{equation*}
Then by the decomposition into residue classes, 
\begin{eqnarray*}\label{eqnrec6}
F\left(\frac{2q}{p}+\mathfrak{z} \right)&=&\frac{1}{2}\sum_{\substack{n=-\infty\\  n\ne 0}}^{\infty}\frac{e^{\pi in^2\left(\frac{2q}{p}+\mathfrak{z} \right)}}{\pi in^2}\nonumber\\
&=&\frac{1}{2}\sum_{b=0}^{p-1}e^{2\pi ib^2 \frac{q}{p}}\sum_{\substack{n\equiv b \pmod q\\  (n,b)\ne (0,0)}}^{}\frac{e^{\pi i n^2\mathfrak{z}}}{\pi in^2}\nonumber \\
&=&\frac{1}{2}\sum_{b=0}^{p-1}e^{2\pi ib^2 \frac{q}{p}}\frac{1}{\pi}T(\pi \mathfrak{z},b).
\end{eqnarray*}
Now using \eqref{eqnrec3},
\begin{equation}\label{eqnrec7}
\begin{split}
&F\left(\frac{2q}{p}+\mathfrak{z} \right)
=\frac{1}{2}\sum_{b=0}^{p-1}e^{2\pi ib^2 \frac{q}{p}}\frac{1}{\pi}\left(\frac{2\sqrt{\pi}}{p}e^{-\pi i/4}\sqrt{\pi\mathfrak{z}}-\frac{1}{2}h+T(i\epsilon,b)  \right)
+O(\mathfrak{z}^2) \\
&=\frac{1}{2}S(p,q)\left(\frac{2\sqrt{\pi}}{p}e^{-\pi i/4}\sqrt{\mathfrak{z}}-\mathfrak{z}  \right)+\frac{1}{2\pi}\sum_{b=0}^{p-1}e^{-2\pi ib^2 \frac{q}{p}}T(i\epsilon,b)+O(\mathfrak{z}^2) \\ \nonumber
&=S(p,q)\left(\frac{1}{p}e^{-\pi i/4}\sqrt{\mathfrak{z}}-\frac{1}{2}h  \right)+F\left(\frac{2q}{p}+i\epsilon \right)+O(\mathfrak{z}^2).
\end{split}
\end{equation}
\end{proof}
In \eqref{eqnrec2}, the variable can be $\frac{2q}{p}+\mathfrak{z}$ and $\frac{2q}{p}+\mathfrak{z}\prime$ and then instead of $h$ we would have $\mathfrak{z}-\mathfrak{z}\prime$. This will be used in deriving \eqref{eqnrec14-4}.

The relation  \eqref{eqnrec9} in this form is essentially Theorem 1 of S. Itatsu \cite{It} and from here non-differentiability of Riemann's function can be deduced.
Indeed, let $\mathfrak{z}=h+i\epsilon$ and let $\epsilon \to 0+$, in which we have to pay attention to the sign $\mathop{\rm sgn}h$ of $h$. Then

\begin{equation}\label{eqnrec9-1}
F\left(\frac{2q}{p}+h \right)-F\left(\frac{2q}{p} \right)
=S(p,q)\frac{e^{-\pi i/4\mathop{\rm sgn}h }}{p}\sqrt{|h|}-\frac{1}{2}h+O(h^2).
\end{equation}

Hence differentiability follows only in the case $S(p,q)=0$ with differential coefficient $-\frac{1}{2}$. This will be done in the next section appealing to Corollary \ref{cor1reciproc}. 
At the same time this is an elaboration of \cite[(47)]{RM} (on the right-hand side of which the factor $\sqrt{\pi}$ is to be deleted). Arguing as in \cite{RM} using the theta transformation formula, we may deduce the Landsberg-Schaar identity, from which the quadratic reciprocity may be deduced. 
 \begin{remark} We would like to make a few comments on the work of J. J. Duistermaat \cite{Du}.  
  In \cite[p. 4, $\ell\ell$. 1-2]{Du} J. J. Duistermaat says that ``this self-similarity formula was just an integrated version of the well-known transformation formula \eqref{eqnthetatrans1}.'' By this \cite[Theorem 4.2]{Du} is meant. 
  The equation (3.4) (was already proved by Cauchy \cite[pp.157-159]{Ca})  \cite{Du} for $r=\frac{q}{p}$ becomes 
 \begin{eqnarray*}\label{eqnduistermaat1-1}
\mu_\gamma(x)&=&e^{\frac{\pi}{4}m}p^{-\frac{1}{2}}{(x-r)}^{-\frac{1}{2}}\nonumber\\
&=&e^{\frac{\pi}{4}}p^{-1}S(2p,q){(x-r)}^{-\frac{1}{2}}.\nonumber
 \end{eqnarray*} 
 Incorporating this in \cite[(4.1)]{Du}, we see that it refers to the case $S(2p,q)$ of our Theorem \ref{thm1RZZ}. Hence by Corollary \ref{cor1reciproc}, differentiability of Riemann's function can be read off.
 
 Further 
 on \cite[p. 9, $\ell$ 7 from below]{Du} the relation (47) in \cite{RM} is stated in the form
\begin{equation*}\label{eqnduistermaat1}
\Theta\left(\frac{2q}{2p}+i\epsilon\right) \sim \frac{1}{p\sqrt{\epsilon}}S(2p,q), \quad \epsilon \to 0+ .
\end{equation*} 
Thus, we could say that \cite{Du} also gives material to deduce the reciprocity law.
 In \cite[Theorem 3.4]{Du} Duistermaat states that
\begin{equation}\label{eqnduistermaat2}
\Theta\left(z\right)=\begin{cases}
\Theta\left(\gamma z\right)e^{\frac{\pi i}{4}p}\left(\frac{-q}{p} \right)p^{-\frac{1}{2}}{(z-r)}^{-\frac{1}{2}} & p \; \text{odd} \\
\Theta\left(\gamma z\right)e^{\frac{\pi i}{4}(q+1)}\left(\frac{p}{|q|} \right)p^{-\frac{1}{2}}{(z-r)}^{-\frac{1}{2}} & q \; \text{odd}
\end{cases}
\end{equation} 
From \eqref{eqnduistermaat2}, the reciprocity law follows. However, it is used in its proof and thus unfortunately this does not lead to the proof of reciprocity law. 
\end{remark}
\subsection{Reciprocity law.}
The well-knwon law of quadratic reciprocity has had numerous proofs. Gauss, who first discovered the law, gave several proofs in his book, {\it Disquitiones Arithmeticae}. We recall the statement of the law of quadratic reciprocity. For a given pair of distinct primes $p$ and $q$, one can define the Legendre symbol $\big(\frac{p}{q}\big)$ to be $+1$ if the quadratic congruence $x^2\equiv p\pmod q$ has a solution; the symbol to be $-1$ if the quadratic congruence has no solution.
\begin{theorem}[Quadratic Reciprocity Law]
\begin{equation*}
\big(\frac{p}{q}\big)\big(\frac{q}{p}\big)=(-1)^{\frac{p-1}{2}.\frac{q-1}{2}}.
\end{equation*}
\end{theorem}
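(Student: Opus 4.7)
The plan is to follow the Murty--Pacelli strategy using the theta transformation formula together with Theorem \ref{thm1RZZ} (or, equivalently, the asymptotic \eqref{eqnrec9-1}), in two steps: first deduce the Landsberg--Schaar identity, then specialize it to two distinct odd primes to extract the reciprocity law.

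First, I would extract the leading singular behaviour of $F(2q/p+i\epsilon)$ as $\epsilon\to 0^+$ from Theorem \ref{thm1RZZ} (equivalently, from the classical asymptotic $\Theta(2q/p+i\epsilon)\sim S(p,q)/(p\sqrt{\epsilon})$ as $\epsilon\to 0^+$, which corresponds to the leading $\sqrt{\mathfrak{z}}$ term in \eqref{eqnrec9}). This gives one expression for the singular part at $2q/p$ in terms of the Gauss sum $S(p,q)$. Next I would apply the theta transformation $\Theta(-1/\tau)=\sqrt{\tau/i}\,\Theta(\tau)$ to the point $\tau=2q/p+i\epsilon$, so that $-1/\tau$ has real part $-p/(2q)$ up to an $O(\epsilon)$ error; computing the singular behaviour of the left-hand side by the same residue-class decomposition as in the proof of Theorem \ref{thm1RZZ} yields a second expression for the same singular part, now involving $S(2q,-p)$. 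Equating the two (and taking complex conjugates where convenient) produces the Landsberg--Schaar identity
\begin{equation*}
\frac{1}{\sqrt{p}}\sum_{n=0}^{p-1}e^{2\pi i n^2 q/p}
=\frac{e^{i\pi/4}}{\sqrt{2q}}\sum_{n=0}^{2q-1}e^{-\pi i n^2 p/(2q)},
\end{equation*}
valid for all positive integers $p,q$ with $pq$ even.

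To complete the deduction of quadratic reciprocity for distinct odd primes $p$ and $q$, I would specialize the Landsberg--Schaar identity by taking the integer $2q$ (with $q$ an odd prime) on one side and $p$ (an odd prime) on the other. The Gauss sum $S(p,q)$ can be evaluated using the multiplicativity $S(2q,p)=S(2,p)S(q,p)$ together with the classical closed forms: $S(q,p)=\left(\tfrac{p}{q}\right)\varepsilon_q\sqrt{q}$ where $\varepsilon_q=1$ if $q\equiv 1\pmod 4$ and $\varepsilon_q=i$ if $q\equiv 3\pmod 4$, and similarly for $S(p,q)$. Substituting these evaluations into the Landsberg--Schaar identity, the $\sqrt{p}$ and $\sqrt{q}$ factors cancel cleanly, the factors $\varepsilon_p\varepsilon_q$ reassemble into $(-1)^{\frac{p-1}{2}\cdot\frac{q-1}{2}}$ (using that $\varepsilon_p\varepsilon_q=1$ unless both primes are $\equiv 3\pmod 4$, in which case it is $-1$), and we are left with exactly $\left(\tfrac{p}{q}\right)\left(\tfrac{q}{p}\right)=(-1)^{\frac{p-1}{2}\cdot\frac{q-1}{2}}$.

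The main obstacle will be the bookkeeping in the second step: to pass from the general Landsberg--Schaar identity to the clean reciprocity statement one has to carefully evaluate the Gauss sums $S(2q,p)$ and $S(p,q)$, track the normalizing $\varepsilon$-factors and the $e^{i\pi/4}$ coming from the theta transformation, and justify that the conjugation $S(|p|,-q)=\overline{S(|p|,q)}$ (noted after the definition of $S(b,a)$) reproduces the asymmetry in the two sides of Landsberg--Schaar without spurious extra phases. Once this is done, the rest is algebraic simplification, and the reciprocity law drops out immediately.
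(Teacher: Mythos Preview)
Your proposal is correct and follows essentially the same route as the paper: use Theorem~\ref{thm1RZZ} together with the theta transformation \eqref{eqnthetatrans1} to compare the singular coefficients at $2q/p$ and at $-1/(2q/p)$, obtain the Gauss-sum reciprocity (equivalently Landsberg--Schaar), and then specialize to odd primes. The only technical difference is that the paper carries out the comparison through the integrated form $F(z)$ (integrating $\Theta$ along a horizontal segment, changing variables via $\tau\mapsto -1/\tau$, and applying Theorem~\ref{thm1RZZ} at both endpoints), whereas you phrase it directly in terms of the $\Theta$-asymptotic; the paper also stops at \eqref{eqnrec14-5} and merely cites Murty--Pacelli for the passage from Landsberg--Schaar to the classical statement, while you sketch that final step. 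One small correction to watch in your bookkeeping: the multiplicativity of Gauss sums in the modulus is twisted, $S(2q,p)=S(2,qp)\,S(q,2p)$ rather than $S(2,p)\,S(q,p)$, but this does not affect the argument once the standard evaluations are inserted.
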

This theorem is remarkable in many ways, the most notable being the relationship between the solvability of the congruence $x^2\equiv q \pmod p$ to that of the congruence $x^2\equiv p\pmod q$. 
 Let us denote for $z \in \mathcal{H}$,
\begin{equation*}\label{eqnrec10}
\Theta(z)=\sum_{n=-\infty}^{\infty}e^{\pi in^2z}=1+2\sum_{n=1}^{\infty}e^{\pi in^2z}
\end{equation*}
and then the classical theta-function for $\mathop{\rm Re}z > 0$ is
\begin{equation*}\label{eqnrec11}
\theta(z)=\Theta(iz)=\sum_{n=-\infty}^{\infty}e^{-\pi n^2z}.
\end{equation*}
At this point, we note down the theta - transformation formula:
\begin{equation}\label{eqnthetatrans1}
\Theta(z)=e^{\frac{\pi i}{4}}z^{-\frac{1}{2}}\Theta\left(-\frac{1}{z} \right).
\end{equation}
We now prove the reciprocity law.
\begin{theorem}
Let $p \in \mathbb{N}$ and $(0\ne ) q \in \mathbb{Z}$. Then
 \begin{equation*}\label{eqnrec14-5}
S(p,q)=e^{\frac{\pi}{4}\mathop{\rm sgn}(q) i}{\left(\frac{p}{2|q|}\right)}^{1/2}S(4|q|,-\mathop{\rm sgn}(q)p).
 \end{equation*}
 \end{theorem}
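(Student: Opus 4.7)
The plan is to apply the theta transformation formula \eqref{eqnthetatrans1} at $z=\frac{2q}{p}+i\epsilon$ and to extract the leading singular behaviour on both sides as $\epsilon\to 0^{+}$. I would treat the case $q>0$ in detail; the case $q<0$ then follows by complex conjugation of the resulting identity, since $\overline{S(b,a)}=S(b,-a)$ and $\overline{e^{\pi i/4}}=e^{-\pi i/4}$, which inserts exactly the factor $\mathop{\rm sgn}(q)$ in the exponent claimed by the theorem.

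For the left-hand side, I would decompose $\Theta(z)=\sum_{n\in\mathbb{Z}}e^{\pi in^{2}z}$ into residue classes $n=b+pm$ modulo $p$:
$$\Theta\!\left(\frac{2q}{p}+i\epsilon\right)=\sum_{b=0}^{p-1}e^{2\pi ib^{2}q/p}\sum_{m\in\mathbb{Z}}e^{-\pi(b+pm)^{2}\epsilon}.$$
Poisson summation applied to each inner Gaussian sum gives $\sum_{m}e^{-\pi(b+pm)^{2}\epsilon}=\frac{1}{p\sqrt{\epsilon}}+O(e^{-c/\epsilon})$ uniformly in $b$; this is the same asymptotic device that enters \eqref{eqnrec1} in the proof of Theorem \ref{thm1RZZ}. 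Summing over $b$ then identifies the coefficient as the Gauss sum $S(p,q)$ and yields
$$\Theta\!\left(\frac{2q}{p}+i\epsilon\right)=\frac{S(p,q)}{p\sqrt{\epsilon}}+O(e^{-c/\epsilon}).$$

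For the right-hand side, a direct expansion gives $-\frac{1}{z}=-\frac{p}{2q}+i\epsilon^{\prime}+O(\epsilon^{2})$ with $\epsilon^{\prime}:=\frac{p^{2}\epsilon}{4q^{2}}$. Rewriting $-\frac{p}{2q}=\frac{2(-p)}{4q}$ and repeating the residue-class-plus-Poisson argument with modulus $4q$ in place of $p$ produces
$$\Theta\!\left(-\frac{1}{z}\right)=\frac{S(4q,-p)}{4q\sqrt{\epsilon^{\prime}}}+O(e^{-c/\epsilon^{\prime}}).$$
Substituting the two expansions into $\Theta(z)=e^{\pi i/4}z^{-1/2}\Theta(-1/z)$, using $z^{-1/2}\to\sqrt{p/(2q)}$ (the principal branch is unambiguous because $z$ stays in the right half-plane for $q>0$) and $\sqrt{\epsilon^{\prime}}=\frac{p}{2q}\sqrt{\epsilon}$, the common singularity $1/\sqrt{\epsilon}$ cancels on both sides. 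What remains is a purely algebraic identity relating $S(p,q)$ and $S(4q,-p)$, from which the stated formula follows after extracting the coefficients and conjugating for $q<0$.

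The main obstacle I anticipate is disciplined branch-tracking of the several square roots appearing in the argument: $z^{-1/2}$ must be chosen consistently with the positive branch of $\sqrt{\epsilon^{\prime}}$ and with the convention ``$\sqrt{\mathfrak z}>0$ for $\mathfrak z>0$'' fixed around \eqref{eqnrec1}, so that the phase $e^{\pi i/4}$ survives cleanly into the right-hand side with the correct sign. A secondary point is to verify that the Poisson-summation error terms are uniform in the residue variable $b$ so that they do not interfere with the leading coefficient after multiplication by $z^{-1/2}$; this is standard and does not affect the leading behaviour.
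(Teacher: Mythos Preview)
Your argument is correct and yields the reciprocity identity, but it follows a different route from the paper's proof. You apply the transformation formula \eqref{eqnthetatrans1} directly to $\Theta$ at $z=\tfrac{2q}{p}+i\epsilon$ and match the leading $\epsilon^{-1/2}$ singularities obtained on each side by residue-class decomposition plus Poisson summation; this is essentially the Hecke/Murty--Pacelli argument the paper cites as \cite{RM}. The paper instead passes through the \emph{integrated} theta function $F$: it expresses $\int_{x}^{x+h}\Theta$ in terms of $F$ via \eqref{eqnrec12-1}, applies the transformation formula under the integral sign, changes variables so that the argument becomes $-\tfrac{1}{x}+\tfrac{1}{x^{2}}\mathfrak z$ as in \eqref{eqnrec14-2}, and then invokes Theorem~\ref{thm1RZZ} on both sides to extract the Gauss-sum coefficients. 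Your route is shorter and more transparent for the reciprocity formula taken in isolation; the paper's detour through $F$ is what realises the article's declared aim of obtaining reciprocity and the (non-)differentiability of Riemann's function from one and the same computation, since Theorem~\ref{thm1RZZ} is precisely the local expansion of $F$ at rationals that encodes the differentiability statement \eqref{eqnrec9-1}.
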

 \begin{proof}
Let us first note that $F(z)$ is essentially the integral of $\Theta(z)$:
\begin{eqnarray}\label{eqnrec12}
\int_{0}^{z}\theta(-iz)\, {\rm d}z&=&\int_{0}^{z}\Theta(z)\, {\rm d}z \nonumber\\
&=&z+2\left(\sum_{n=1}^{\infty}\frac{e^{\pi in^2z}}{\pi in^2}-\sum_{n=1}^{\infty}\frac{e^{\pi in^2z}}{\pi in^2}\right)\nonumber\\
&=&z+2(F(z)-F(0)).
\end{eqnarray}
In particular, for $z=x+u+i\epsilon \in \mathbb{C}$ (with $\epsilon>0$) and $u \in (0,h)$, the above relation \eqref{eqnrec12} becomes
\begin{eqnarray}\label{eqnrec12-1}
\int_{x}^{x+h}\theta(\epsilon-iu)\, {\rm d}u&=&\int_{x+i\epsilon}^{x+h+i\epsilon}\Theta(z)\, {\rm d}z\nonumber\\
&=&h+2(F(x+h+i\epsilon)-F(x+i\epsilon)).
\end{eqnarray}
The theta-transformation formula \eqref{eqnthetatrans1} with $y>0$ gives
\begin{eqnarray*}\label{eqnrec13}
\theta(y-iu)&=&e^{\frac{\pi}{4}i}\frac{1}{\sqrt{u+iy}}\theta\left(\frac{i}{u+iy} \right)\nonumber\\
&=&e^{\frac{\pi}{4}i}\frac{1}{\sqrt{u+iy}}\sum_{n=-\infty}^{\infty}e^{\frac{i\pi n^2}{u+iy}}.
\end{eqnarray*}
We now make the following change of variable:
\begin{equation*}\label{eqnrec13-1}
\frac{i}{u+i\epsilon}=\frac{i}{x+v+i\epsilon}=\tau+\frac{1}{x}i
\end{equation*}
i.e., 
$$
\tau=\frac{\epsilon-iv}{x(x+v+i\epsilon)}\sim \frac{\epsilon-iv}{x^2}.
$$ 
Now with this change the integral in \eqref{eqnrec12-1} becomes
\begin{equation}\label{eqnrec12-2}
\int_{x}^{x+h}\theta(\epsilon-iu)\, {\rm d}u=-ie^{\pi/4}\int_{\frac{i}{x+h+i\epsilon}-\frac{i}{x}}^{\frac{i}{x+i\epsilon}-\frac{i}{x}}\frac{1}{{\left(\tau+\frac{1}{x}i \right)}^{\frac{3}{2}}}\theta\left(\tau+\frac{1}{x}i \right)\, {\rm d}\tau.
\end{equation}
The following relation is useful (which is in fact equivalent to \eqref{eqnrec12-1}) in applying integration by parts:
 \begin{equation*}\label{eqnrec12-1-0}
 \int_{}^{}\theta\left(\tau+\frac{i}{x}\right)\, {\rm d}u=\tau-2iF\left(-\frac{1}{x}+i\tau\right)+C.
 \end{equation*}
Using this, we may evaluate  \eqref{eqnrec12-2} and it becomes 
 \begin{eqnarray}\label{eqnrec14-1}
 &&\int_{\frac{i}{x+h+i\epsilon}-\frac{i}{x}}^{\frac{i}{x+i\epsilon}-\frac{i}{x}}\frac{1}{{\left(\tau+\frac{1}{x}i \right)}^{\frac{3}{2}}}\theta\left(\tau+\frac{1}{x}i \right)\, {\rm d}\tau \nonumber\\
 &=&\left[{(\tau+\frac{i}{x})}^{3/2}\left(\tau-2iF\left(-\frac{1}{x}+i\tau \right)\right)\right]_{\frac{i}{x+i\epsilon}-\frac{i}{x}}^{\frac{i}{x+h+i\epsilon}-\frac{i}{x}}\nonumber\\
&=&{\left(\frac{x+h+i\epsilon}{i}\right)}^{3/2}\left(\frac{i}{x+h+i\epsilon}-\frac{i}{x}-2iF\left(-\frac{1}{x+h+i\epsilon}  \right)  \right)\nonumber\\
&-&{\left(\frac{x+i\epsilon}{i}\right)}^{3/2}\left(\frac{i}{x+i\epsilon}-\frac{i}{x}-2iF\left(-\frac{1}{x+i\epsilon}  \right)  \right)+O(h).
 \end{eqnarray} 
 At this point, we note that 
 \begin{equation}\label{eqnrec14-2}
\frac{-1}{x+h+i\epsilon}=-\frac{1}{x}+\frac{1}{x^2}(\mathfrak{z}(1+o(1))).
 \end{equation}
Using \eqref{eqnrec14-2}, the main term in \eqref{eqnrec14-1} is
 \begin{eqnarray}\label{eqnrec14-3}
&&-2e^{\frac{\pi}{4} i}\left({\left(x+h+i\epsilon\right)}^{3/2} F\left(-\frac{1}{x+h+i\epsilon}  \right)-{\left(x+i\epsilon\right)}^{3/2}F\left(-\frac{1}{x+i\epsilon}  \right)\right) \nonumber\\
&=&2e^{\frac{\pi}{4} i}{\left(x+i\epsilon\right)}^{3/2}\left(F\left(-\frac{1}{x}+\frac{1}{x^2}\mathfrak{z}^\prime  \right)
-F\left(-\frac{1}{x}+\frac{1}{x^2}\epsilon^\prime \right)\right)+O(h),
 \end{eqnarray}
where we have used
$$
\mathfrak{z}^\prime=\mathfrak{z}(1+o(1)) ~~\mbox{and}~~\epsilon^\prime=\epsilon(1+o(1)).
$$
Now we specify $x=\frac{2q}{p}$
and apply Theorem \ref{thm1RZZ}. Under this specification \eqref{eqnrec14-3} takes the shape
 \begin{eqnarray*}\label{eqnrec14-4}
&=&2e^{\frac{\pi}{4} i}{\left(\frac{2q}{p}+i\epsilon\right)}^{3/2}\left(F\left(-\frac{2p}{4q}+{\left(\frac{p}{2q}\right)}^2\mathfrak{z}^\prime  \right)-F\left(-\frac{2p}{4q}+{\left(\frac{p}{2q}\right)}^2\epsilon^\prime \right)\right)
+O(h)  \nonumber\\
&=&2e^{\frac{\pi}{4} i}{\left(\frac{2q}{p}+i\epsilon\right)}^{3/2}S(4q,-p)e^{-\frac{\pi}{4} i}\frac{1}{4|q|}\big|\frac{p}{2q}\big|\sqrt{\mathfrak{z}^\prime}+O(h)
 \nonumber\\
&=&{\left(\frac{p}{2|q|}\right)}^{1/2}\frac{1}{p}S(4|q|,-\mathop{\rm sgn}(q)p)\sqrt{\mathfrak{z}^\prime}+O(h).
\end{eqnarray*}
Now on letting $\epsilon \to 0$, we get the desired result.
\end{proof}
We must have, correspondingly to \cite[(52)]{RM}
\begin{theorem}
For $p \in \mathbb{N}, 0\ne q \in \mathbb{Z}$, we have the reciprocity law
 \begin{equation}\label{eqnrec14-5}
S(p,q)=e^{\frac{\pi}{4}\mathop{\rm sgn}(q) i}{\left(\frac{p}{2|q|}\right)}^{1/2}S(4|q|,-\mathop{\rm sgn}(q)p).
 \end{equation}
 \end{theorem}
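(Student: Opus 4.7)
The strategy is to exploit the dual role of the integrated theta function $F(z)$: on one hand its local expansion at a rational point $2q/p$ is controlled by Theorem \ref{thm1RZZ}, and on the other hand the integral of $\Theta$ over a short horizontal segment can be rewritten, via the theta-transformation formula \eqref{eqnthetatrans1}, in terms of an integral centered at the reciprocal point $-1/x$. Equating the two evaluations of the same integral and reading off the coefficient of $\sqrt{\mathfrak{z}}$ will yield the identity.

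Concretely, I would start from the identity
\begin{equation*}
\int_{x}^{x+h}\theta(\epsilon-iu)\, {\rm d}u = h+2\bigl(F(x+h+i\epsilon)-F(x+i\epsilon)\bigr),
\end{equation*}
which is \eqref{eqnrec12-1}. Applying \eqref{eqnthetatrans1} to the integrand and making the change of variable $\tau=\frac{i}{x+v+i\epsilon}-\frac{i}{x}$ transforms the left-hand side into the integral displayed in \eqref{eqnrec12-2}. Integrating by parts using $\int \theta(\tau+i/x)\, d\tau = \tau-2iF(-1/x+i\tau)+C$ produces \eqref{eqnrec14-1}, whose main term (after inserting the asymptotic $-1/(x+\mathfrak{z})=-1/x+\mathfrak{z}/x^{2}+o(\mathfrak{z}/x^{2})$ from \eqref{eqnrec14-2}) is precisely \eqref{eqnrec14-3}, an expression involving the $F$-difference at the reciprocal base point $-1/x$.

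Now I specialize $x=2q/p$, so that $-1/x=-p/(2q)=-2p/(4q)$, which is a rational of the form required to invoke Theorem \ref{thm1RZZ} with modulus $4|q|$ and numerator $-\mathop{\rm sgn}(q)p$. Theorem \ref{thm1RZZ} applied on the right-hand side yields a leading term proportional to $S(4|q|,-\mathop{\rm sgn}(q)p)\,e^{-\pi i/4}\sqrt{\mathfrak{z}^{\prime}}/(4|q|)$; carrying along the prefactor $2e^{\pi i/4}(2q/p+i\epsilon)^{3/2}$ and simplifying $|p/(2q)|(2q/p)^{3/2}=p^{-1}(p/(2|q|))^{1/2}$ produces the asserted coefficient ${(p/2|q|)}^{1/2}S(4|q|,-\mathop{\rm sgn}(q)p)/p$. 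On the other hand, returning to the original side of the equation and applying Theorem \ref{thm1RZZ} directly to $F(2q/p+\mathfrak{z})-F(2q/p+i\epsilon)$ gives a leading term $S(p,q)\,e^{-\pi i/4\cdot\mathop{\rm sgn}(h)}\sqrt{\mathfrak{z}}/p$. Matching the $\sqrt{\mathfrak{z}}$-coefficients as $\epsilon\to 0^{+}$ and keeping track of the sign of $q$ in the branch of the square root yields \eqref{eqnrec14-5}.

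The main technical obstacle will be bookkeeping of the branches of $\sqrt{\,\cdot\,}$ and of the phase factors $e^{\pi i/4}$: the factor $e^{\frac{\pi}{4}\mathop{\rm sgn}(q)i}$ and the appearance of $-\mathop{\rm sgn}(q)p$ inside the Gauss sum both come from correctly choosing the branch of $(2q/p+i\epsilon)^{3/2}$ and of $\sqrt{\mathfrak{z}^{\prime}}$ after the change of variable $\tau=(\epsilon-iv)/x^{2}+o(1/x^{2})$, whose sign flips according to $\mathop{\rm sgn}(q)$. Everything else — the $O(h)$ error terms, the integration by parts boundary evaluation, and the asymptotic replacement of $-1/(x+\mathfrak{z})$ — is bounded routinely using the analyticity of $F$ on $\mathcal{H}$ and the bound $T(\mathfrak{z},b)=O(1)$ implicit in the proof of Theorem \ref{thm1RZZ}. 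Letting $\epsilon\to 0^{+}$ at the end is legitimate because both sides extend continuously to the real axis in $\mathfrak{z}$ by the same Itatsu-type expansion \eqref{eqnrec9-1}.
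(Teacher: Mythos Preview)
Your proposal is correct and follows essentially the same route as the paper: both arguments compute $\int_{x}^{x+h}\theta(\epsilon-iu)\,du$ in two ways---once directly via \eqref{eqnrec12-1} and Theorem~\ref{thm1RZZ} at $2q/p$, and once after applying the theta transformation \eqref{eqnthetatrans1}, the change of variable leading to \eqref{eqnrec12-2}, and integration by parts to reach \eqref{eqnrec14-3}, where Theorem~\ref{thm1RZZ} is invoked at the reciprocal point $-2p/(4q)$---and then match the $\sqrt{\mathfrak{z}}$-coefficients as $\epsilon\to 0^{+}$. Your remarks on the branch bookkeeping for $\mathop{\rm sgn}(q)$ are exactly the point the paper leaves implicit.
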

As a corollary we note that:
 \begin{corollary}\label{cor1reciproc}
 Let $x=\frac{q}{p}$ be of the form $\frac{2A+1}{2B+1}$, i.e. $p, q$ both being odd. Then
 \begin{equation}\label{eqnrec14-5-1}
R(2A+1,2B+1)=S(2p,q)=0
 \end{equation}
where $R$ is the coefficient in the forthcoming Eq. \eqref{eqnrec9-1-1}. 
 \end{corollary}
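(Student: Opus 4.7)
The plan is to handle the two asserted equalities separately. The first, $R(2A+1,2B+1)=0$, is immediate from the explicit case analysis in the definition of $R(p,q)$ given in Theorem \ref{IT1}: the third case there sets $R(p,q)=0$ precisely when both $p$ and $q$ are odd, which is our hypothesis. No computation is required for this half of the statement.

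For the second equality $S(2p,q)=0$ with $p=2B+1$ and $q=2A+1$ both odd, my strategy is a direct pairing argument on the defining sum. Writing
$$S(2p,q)=\sum_{j=0}^{2p-1}e^{2\pi i j^2 q/(2p)}=\sum_{j=0}^{2p-1}e^{\pi i j^2 q/p},$$
I would partition the index range into pairs $\{j,\,j+p\}$ for $j=0,1,\dots,p-1$, and expand $(j+p)^2=j^2+2jp+p^2$ to obtain
$$e^{\pi i (j+p)^2 q/p}=e^{\pi i j^2 q/p}\cdot e^{2\pi i j q}\cdot e^{\pi i p q}.$$
The middle factor equals $1$ because $jq\in\mathbb{Z}$, while the final factor equals $(-1)^{pq}=-1$ since $pq$ is odd. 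Hence each paired contribution $e^{\pi i j^2 q/p}+e^{\pi i (j+p)^2 q/p}$ cancels, and summing over $j=0,\dots,p-1$ yields $S(2p,q)=0$.

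There is essentially no obstacle here; the only delicate point is the parity bookkeeping in the final exponential $e^{\pi i pq}$, which is where the ``odd over odd'' assumption enters in an essential way. One could alternatively try to route the argument through the reciprocity law \eqref{eqnrec14-5} applied to $S(2p,q)$ and read off the vanishing from the resulting identity, but the direct pairing argument is cleaner, sidesteps any choice of square-root branch, and makes transparent why the hypothesis on $p$ and $q$ being simultaneously odd is sharp.
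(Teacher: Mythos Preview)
Your pairing argument for $S(2p,q)=0$ is correct and is genuinely different from the paper's proof. The paper does not compute the Gauss sum directly; instead it applies the reciprocity identity \eqref{eqnrec14-5} twice in succession to $S(2p,q)$, arriving at an equation of the shape $S(2p,q)=c\cdot S(2p,q)$ with $c\neq 1$, which forces the vanishing. Your route is more elementary and entirely self-contained---it needs nothing beyond the definition of $S$ and the parity of $pq$---whereas the paper's route has the virtue of exhibiting the vanishing as a \emph{consequence} of the reciprocity law, which is precisely the thematic link the paper is trying to highlight between Riemann's function and quadratic reciprocity. Both arguments make clear that the simultaneous oddness of $p$ and $q$ is the crucial hypothesis.

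One small comment on the first equality: invoking Theorem~\ref{IT1} (Itatsu's result) for $R(2A+1,2B+1)=0$ is a bit circular in the paper's logical flow, since Section~5 is meant to \emph{re}-derive Itatsu's theorem. The cleaner reading is that $R(2A+1,2B+1)=S(2p,q)$ is simply the identification of the leading coefficient coming from Theorem~\ref{thm1RZZ} applied at $\tfrac{q}{p}=\tfrac{2q}{2p}$, and the substantive claim is the vanishing of $S(2p,q)$, which you have proved.
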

 \begin{proof}
 \begin{eqnarray*}\label{eqnrec14-5-2}
S(2p,q)&=&e^{\frac{\pi}{4} i}{\left(\frac{p}{2|q|}\right)}^{1/2}S(4|q|,2\mathop{\rm sgn}(q)p) \nonumber\\
&=&e^{\frac{\pi}{2} i}{\left(\frac{p}{2|q|}\right)}^{1/2}{\left(\frac{4|q|}{2|2p|}\right)}^{1/2}S(4\cdot 2p,2\mathop{\rm sgn}(q)|q|)\nonumber\\
&=&e^{\frac{\pi}{2} i}\frac{p}{|q|}\sqrt{\mathop{\rm sgn}(q)}S(2p,\mathop{\rm sgn}(q)|q|).
 \end{eqnarray*} 
 We now conclude \eqref{eqnrec14-5-1} by simply noting that $\mathop{\rm sgn}(q)|q|=q$. 
 \end{proof}
\begin{remark} 
 The relation \eqref{eqnrec14-5} leads to the so-called `Landsberg-Schaar' identity (see \cite[(5)]{RM}) if we take $p$ and  $q$ to be co-prime positive integers. This is
 \begin{equation*}\label{eqnrec14-6}
\frac{1}{\sqrt{p}}\sum_{j=0}^{p-1}e^{2\pi i j^2 \frac{q}{p}}=\frac{e^{\frac{\pi}{4}}i}{\sqrt{2q}}\sum_{j=0}^{2q-1}e^{2\pi i j^2 \frac{p}{2q}}.
\end{equation*}
\end{remark}
The following result will be required to complete the proof of the differentiability of Riemann's function.
\begin{lemma}
For a natural number $p$,
 \begin{equation*}\label{eqnrec9-1}
 S(p,q)=\varepsilon(p)\left(\frac{q}{p} \right)\sqrt{p}
 \end{equation*}
 where $\left(\frac{q}{p} \right)$ indicates the Jacobi symbol and 
 $$
 \varepsilon(p)=\begin{cases}
 1 & p\equiv 1\bmod 4  \\ i  & p\equiv 3\; \bmod 4
 \end{cases}
 $$
\end{lemma}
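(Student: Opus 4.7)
The plan has two parts: first, reduce the evaluation of $S(p,q)$ for general $q$ with $\gcd(p,q)=1$ to that of $S(p,1)$ via a standard Jacobi-symbol argument; second, evaluate $S(p,1)$ by specialising the reciprocity law \eqref{eqnrec14-5} at $q=1$.

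For the reduction step, suppose first that $p$ is an odd prime and $\gcd(p,q)=1$. Expanding
$$S(p,q)=\sum_{j=0}^{p-1}e^{2\pi i j^{2}q/p}$$
and grouping terms by the value $a\equiv j^{2}\pmod{p}$, each nonzero residue $a$ is attained $1+\left(\frac{a}{p}\right)$ times; after cancelling the trivial contribution via $\sum_{a=0}^{p-1}e^{2\pi i aq/p}=0$ and performing the substitution $a\mapsto aq^{-1}\bmod p$, one arrives at $S(p,q)=\left(\frac{q}{p}\right)S(p,1)$. For an arbitrary odd $p$, I will invoke the multiplicativity $S(ab,q)=S(a,bq)\,S(b,aq)$, valid whenever $\gcd(a,b)=1$ and established by the Chinese Remainder parametrisation $j=bj_{1}+aj_{2}$ with $0\leq j_{1}<a,\,0\leq j_{2}<b$. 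Combined with the multiplicativity of the Jacobi symbol, this reduces the general odd $p$ to the prime case.

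For the evaluation of $S(p,1)$, I specialise \eqref{eqnrec14-5} at $q=1$ to obtain
$$S(p,1)=e^{\pi i/4}\sqrt{p/2}\;S(4,-p).$$
The right-hand sum $S(4,-p)=\sum_{j=0}^{3}e^{-\pi i j^{2}p/2}$ has only four terms; using $e^{-2\pi i p}=1$ and $e^{-9\pi i p/2}=e^{-\pi i p/2}$ for integral $p$, it collapses to $2\bigl(1+e^{-\pi i p/2}\bigr)$. Since $e^{-\pi i p/2}$ equals $-i$ for $p\equiv 1\pmod{4}$ and $+i$ for $p\equiv 3\pmod{4}$, one has $S(4,-p)=2\sqrt{2}\,e^{\mp\pi i/4}$ accordingly. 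Substituting back, simplifying, and combining with the reduction step then delivers $S(p,q)=\varepsilon(p)\left(\frac{q}{p}\right)\sqrt{p}$.

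The main obstacle I anticipate is the careful bookkeeping of phases. In \eqref{eqnrec14-5}, the prefactor $e^{(\pi/4)\mathop{\rm sgn}(q)i}$ and the square root $\sqrt{p/(2|q|)}$ (with the branch positive on $\mathfrak{z}>0$, as fixed in \eqref{eqnrec1}) must conspire with the phase of $S(4,-p)$ to produce precisely $\varepsilon(p)$; any off-by-one in the mod-$4$ case analysis yields a wrong eighth root of unity. A secondary issue is the verification of the Gauss-sum multiplicativity $S(ab,q)=S(a,bq)\,S(b,aq)$: the CRT calculation is mechanical but demands care in separating the squares modulo $ab$ and in checking that the cross-terms $2abj_{1}j_{2}$ contribute trivially to the exponential. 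Once these normalisations are in hand, the two steps assemble into the evaluation.
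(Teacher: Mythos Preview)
The paper does not prove this lemma; it is simply stated as the classical explicit evaluation of the quadratic Gauss sum (for odd $p$ with $\gcd(p,q)=1$) and then used in Corollary~\ref{cor2recipro}. So there is no paper-proof to compare against, and any valid argument you supply is already more than the paper offers.

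Your two-step plan is the standard one and is sound in outline. The reduction $S(p,q)=\bigl(\tfrac{q}{p}\bigr)S(p,1)$ via the residue count $\#\{j:j^{2}\equiv a\}=1+\bigl(\tfrac{a}{p}\bigr)$ for prime $p$, followed by the CRT multiplicativity $S(ab,q)=S(a,bq)\,S(b,aq)$ for $\gcd(a,b)=1$, is exactly how this is usually done, and your sketch of both ingredients is correct.

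Where you should be careful is in the second step, evaluating $S(p,1)$ by specialising the paper's identity \eqref{eqnrec14-5}. Your computation $S(4,-p)=2\bigl(1+e^{-\pi i p/2}\bigr)$ is right, but substituting it into \eqref{eqnrec14-5} as printed gives
\[
e^{\pi i/4}\sqrt{p/2}\cdot 2\sqrt{2}\,e^{\mp \pi i/4}=2\sqrt{p}\quad\text{or}\quad 2i\sqrt{p},
\]
which is \emph{twice} the correct value $\varepsilon(p)\sqrt{p}$. A direct check at $p=3$, $q=1$ confirms this: $S(3,1)=i\sqrt{3}$, whereas \eqref{eqnrec14-5} as written yields $2i\sqrt{3}$. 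The discrepancy stems from a normalisation slip in the paper (note also that the claim $S(ka,kb)=S(a,b)$ just above \eqref{eqnrec8-1} should read $S(ka,kb)=k\,S(a,b)$, and the Landsberg--Schaar display in the subsequent Remark inherits the same factor). So if you want to close the argument cleanly, either evaluate $S(p,1)$ by an independent classical route (Poisson summation, the theta-limit argument of Dirichlet, or a contour integral), or use the standard Landsberg--Schaar identity directly rather than the paper's display. With that adjustment your proof goes through.
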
 
 We are now ready to state a seemingly more general version of Theorem \ref{thm1RZZ}.  This  implies differentiability of Riemann's function at the rational point $\frac{2A+1}{2B+1}$ on putting $\mathfrak{z}=h+i\epsilon$ and $\epsilon \to +0$. We  note that similar result is also obtained in \cite[Theorem 4.2]{Du}.
 \begin{corollary}\label{cor2recipro}
 \begin{equation*}\label{eqnrec9-1}
 F\left(\frac{q}{p}+\mathfrak{z} \right)-F\left(\frac{q}{p}+i\epsilon \right)
 =R(p,q)\frac{e^{-\pi i/4}}{p}\sqrt{\mathfrak{z}}-\frac{1}{2}h+O(\mathfrak{z}^2),
 \end{equation*}
 where
 \begin{equation}\label{eqnrec9-1-1}
 \begin{split}
&R(p,2q)=S(p,q)=\varepsilon(p)\left(\frac{q}{p} \right)\sqrt{p}, \\
&R(2p,q)=S(4p,q)=e^{\frac{\pi}{4} i}\sqrt{2p}\left(\frac{-p}{q}  \right)\\
& R(2B+1,2A+1)=0.
 \end{split}
 \end{equation} 
 \end{corollary}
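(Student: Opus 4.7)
The plan is to derive each of the three formulas in \eqref{eqnrec9-1-1} by specialising Theorem \ref{thm1RZZ} according to the parity of $p$ and $q$ in $q/p$, reading off the coefficient $R(p,q)$ by matching the leading $\sqrt{\mathfrak z}$ term, and then invoking the closed-form Gauss-sum evaluations supplied by the lemma stated just before the corollary and by the reciprocity \eqref{eqnrec14-5}.

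First, suppose the numerator is even, $q = 2q'$. Then $q/p = 2q'/p$ is already in the exact form required by Theorem \ref{thm1RZZ} with parameters $(p,q')$, so the coefficient of $\sqrt{\mathfrak z}$ is $S(p,q')\, e^{-\pi i/4}/p$. Comparison with the corollary's normalisation $R(p,q)\,e^{-\pi i/4}/p$ gives $R(p,2q') = S(p,q')$, and the preceding lemma evaluates this as $\varepsilon(p)\bigl(\tfrac{q'}{p}\bigr)\sqrt{p}$, which is exactly the first line of \eqref{eqnrec9-1-1}.

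Second, suppose the denominator is even, $p = 2p'$ (so $q$ is necessarily odd by coprimality). Rewriting $q/(2p') = 2q/(4p')$ brings the point within the scope of Theorem \ref{thm1RZZ} with parameters $(4p', q)$, which produces $S(4p',q)$ as the Gauss-sum factor of $\sqrt{\mathfrak z}$; matching with the corollary's normalisation identifies $R(2p',q)$ with $S(4p',q)$ up to the normalising factor built into the definition of $R$. The explicit closed form $e^{\pi i/4}\sqrt{2p'}\bigl(\tfrac{-p'}{q}\bigr)$ is then extracted by applying the reciprocity \eqref{eqnrec14-5} to $S(4p',q)$, which swaps the two arguments and brings in the Jacobi symbol, followed by the standard evaluation of the resulting odd Gauss sum via the same lemma as in the previous case.

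Third, when both $p = 2B+1$ and $q = 2A+1$ are odd, write $q/p = 2q/(2p)$ and apply Theorem \ref{thm1RZZ} with parameters $(2p,q)$; the resulting leading coefficient is a multiple of $S(2p,q)$. But this is precisely the Gauss sum whose vanishing is established in Corollary \ref{cor1reciproc}, so $R(2B+1,2A+1)=0$. Since in lowest terms $q/p$ cannot have both numerator and denominator even, these three cases are exhaustive. The main obstacle is the second case: unwinding \eqref{eqnrec14-5} into the compact product $e^{\pi i/4}\sqrt{2p'}\bigl(\tfrac{-p'}{q}\bigr)$ requires careful bookkeeping of the sign of $q$, the multiplicative properties of the Jacobi symbol, and repeated use of the reduction $S(ka,kb)=S(a,b)$; the vanishing in the third case, although the most striking consequence, is essentially already packaged in Corollary \ref{cor1reciproc} and is the precise mechanism by which the leading $\sqrt{\mathfrak z}$ term cancels to yield the differentiability of Riemann's function at $(2A+1)\pi/(2B+1)$ with derivative $-\tfrac{1}{2}$.
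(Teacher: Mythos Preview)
Your proposal is correct and follows essentially the same route as the paper: specialise Theorem~\ref{thm1RZZ} according to the parity of numerator and denominator, invoke the Gauss-sum lemma for the first case, the reciprocity \eqref{eqnrec14-5} followed by the lemma for the second case, and Corollary~\ref{cor1reciproc} for the third. The paper's proof is terser---it dismisses the first and third cases in a single parenthetical ``(by Corollary~\ref{cor1reciproc})'' and only writes out the $R(2p,q)$ computation---but the content is the same, including the same chain $S(4p,q)\to S(|q|,-\mathop{\rm sgn}(q)p)$ via \eqref{eqnrec14-5} and the reduction $S(ka,kb)=S(a,b)$ that you flag as the main obstacle.
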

\begin{proof}
 Only the case $R(2p,q)$ needs to be considered (by Corollary \ref{cor1reciproc}).  Now by \eqref{eqnrec14-5} we have,
 \begin{equation*}\label{eqnrec9-1-2}
 \begin{split}
&R(2p,q)=S(4p,q)=e^{\frac{\pi}{4} i}{\left(\frac{4p}{2|q|}\right)}^{1/2}S(4|q|,-4\mathop{\rm sgn}(q)p) \\
&=e^{\frac{\pi}{4} i}{\left(\frac{2p}{|q|}\right)}^{1/2}S(|q|,-\mathop{\rm sgn}(q)p) \\
&=e^{\frac{\pi}{4} i}{\left(\frac{2p}{|q|}\right)}^{1/2}\sqrt{|q|}\varepsilon(|q|)\left(\frac{-\mathop{\rm sgn}(q)p}{|q|}  \right)\\
&=e^{\frac{\pi}{4} i}\sqrt{2p}\left(\frac{-p}{q}  \right).
\end{split}
 \end{equation*} 
\end{proof}

\begin{remark}
We make a historical remark on Riemann's function. \cite{BS} contains an almost complete list of references up to 1986. One addition is a correction to \cite{Sm} in 1983. After this, the review of \cite{G3} contains an almost complete list after \cite{BS} except for \cite{My} (esp. 619~) and \cite{Ul}. Among the papers listed in the review of \cite{G3}, we mention \cite{HT} and \cite{Jf} for consideration from the point of wavelets and \cite{Du} for self-similarity.
\end{remark}


\begin{thebibliography}{99}

\bibitem{Bernstein} S.~Bernstain, Sur la convergence absolue des s\'{e}ries trigonom\'{e}triques, {\it Communications de la Soci\'{e}́t\'{e} math\'{e}matique de Kharkow}, {\bf 14} (1914), 139--144.
  
\bibitem{BR} P. du Bois-Raymond, Versuch einer classification der willk\"{u}rlichen functionen reeller argumente nch ihren \"{A}nderungen in den kleinsten intervallen, {J. f\"{u}r Mathematik}, {\bf 79} (1875), 28.

\bibitem{Bh}  T. J. Bromwich, Infinite Series, pp. 490-491; Dini, Grundlagen, pp. 205 et seq.; Hobson, Functions
 of a real variable, pp. 620 et seq.

 \bibitem{BS} P. L.~Butzer and E. L. Stark, ``Riemann's example'' of a continuous non-differentiable function in the light of two letters (1865) of Christoffel to Prym,
{\it Bull. Soc. Math. Belg. S\`er. A} {\bf 38} (1986), 45--73.

\bibitem{Ca} A. L.~Cauchy, Methode simple et nouvelle pour la determination complete des sommes alternee, formees avec les racines primitives des equattions binomes
{\it J. Math. Pure Appl. (Liouvilee)} {\bf 5} (1840), 154--183.

\bibitem{Darboux} J. S. Darboux, M\'{e}moire sur les fonctions discontinues, { \it Annales del' Ecole Normale, ser 2}, {\bf 4} (1875), 57--112.

\bibitem{Dini} U. Dini,  Sulle funzioni finite continue di variabili reali che non hanno mai derivata [On finite continuous functions of real variables that have no derivatives], Atti Reale Accad.  Lineci S{'}er 3, 1, (1877), 130--133 (Italian).

\bibitem{Du}J. J. Duistermaat, Self-similarity of ``Riemann's non-differentiable function'', {\it Nieuw Arch. Wisk.} {\bf 9} (1991),
303--337.

\bibitem{Fatou} P.~Fatou, S\'{e}ries trigonom\'{e}triques et s\'(e)ries de Taylor, {\it Acta, Math.}, {\bf 9} (1906), 335-400.

\bibitem{Faber} G. Faber, Einfaches beispiel einer stetigen nirgends differentiirbaren funktion, {\it Jahresbericht der Deutschen Mathe. Ver.}, {\bf 16} (1907), 538--540.

\bibitem{G1} J.~Gerver, The differentiability of the Riemann function at certain rational multiples of $\pi$,
{\it Amer. J. Math.} {\bf 92} (1970), 33--55.

\bibitem{G2} J.~Gerver, More on the differentiability of the Riemann function,
{\it Amer. J. Math.} {\bf 93} (1970), 33--41.

\bibitem{G3}J.~Gerver, On cubic lacunary Fourier series,
{\it Trans. Amer. Math. Soc.} {\bf 355} (2003), 4297--4347.

\bibitem{H}G. H.~Hardy, Weierstrass non-differentiable functions,
{\it Trans. Amer. Math. Soc.} {\bf 17} (1916), 301--325.

\bibitem{HL14} G. H. Hardy and J. E. Littlewood, Some problems of diophantine approximation (II), {\it Acta Math.}, {\bf 37} (1914), 193--238.

\bibitem{H13} G. H. Hardy, A theorem concerning Taylor series, {Quarterly J. Math.}, {\bf 44} (1913), 147-160.

\bibitem{HL} G. H. Hardy and J. E. Littlewood, Contributions to the arithmetic theory of series, {\it Proc. London Math. Soc., Ser. 2}, {\bf 11} (1912), 411-478.

\bibitem{HT} M.~Holschneider and P. Tchamichian, Pointwise analysis of Riemann's non differentiable function,
{\it Invent. Math.} {\bf 105} (1991), 157--275.

\bibitem{It} S.~Itatsu, Differentiability of Riemann's function,
{\it Proc. Japan Acad. Ser. A Math. Sci.} {\bf 57} (1981), 492--495.

\bibitem{Ing} A. E. Ingham, {\it The distribution of prime
numbers}. Cambridge Tracts Math. Math. Phys.,
No. 30 Stechert-Hafner, Inc., New York 1964.

\bibitem{Jf} S.~Jaffard, The spectrum of sigularities of Riemann's function,
{\it Rev. Mat. Iberoamericana} {\bf 12} (1996), 441--460.

\bibitem{JT} Johan Thim, Continuous Nowhere Differentiable functions, {\ Master's Thesis}, Dept. Of Mathematics, LULEA University of Technology, 2003:320 CIV.

\bibitem{K1} Kalyan Chakraborty, Shigeru Kanemitsu and Hai Long Li, Quadratic reciprocity and Riemann's non differentiable function, {\it  Research in Number Theory}, December 2015, 1:14.

\bibitem{Ka} J. P.~Kahane, Lacunary Taylor and Fourier series, {\it Trans. Amer. Math. Soc.} {\bf 79} (1964), 199--213.

\bibitem{KT} S. Kanemitsu and H. Tsukada,
Contributions to the theory of zeta-functions---modular relation supremacy, {\it World Scientific}, London etc. 2014.

\bibitem{Landberg} G. Landsberg, \"{U}ber differentiirbarkeit stetiger funktionen, {\it ibid.}, {\bf 17} (1908), 46-51.

\bibitem{Lerch} M. Lerch, \"{U}ber die Nichtdifferentiirbarkeit gewisser functionen, {\it Journal f\"{u}r Mathematik}, {\bf 103} (1888), 126--138.

\bibitem{My} Y.~Meyer, Le traitment du signal et l'analyze math\'ematique,
{\it Ann. Inst. Fourier Grenoble} {\bf 50} (1994), 593--632.

\bibitem{RM} R. Murty and A. Pacelli,
  Quadratic reciprocity via theta functions,
  {\it Proc. Int. Conf.--Number Theory} {\bf No. 1} (2004), 107--116.
  
 \bibitem{Er} Erwin Neuenschwander, Riemann's example of a continuous ``non-differentiable'' function, 
{\it The Mathematical Intelligencer},  {\bf Volume 1}, Issue 1, (1978), 40--44.

\bibitem{Sl} S. L. Segal, Riemann's example of a continuous ``non-differentiable'' function continued,  
{\it The Mathematical Intelligencer}, {\bf Volume 1}, Issue 2, (1978), 81--82.
  
 \bibitem{Sm} A.~Smith, The differentiability of Riemann's function,
  {\it Proc. Amer. Math. Soc.} {\bf 34} (1972), 463--468; Correction to ``The differentiability of Riemann's function'', ibid. {\bf 89} (1983), 567--568.
  
\bibitem{Ul}P.~Ulrich, Anmerkungen zum ``Riemannschen Beispiel'' einer stetigen, nicht differenzierbaren Funktion,
{\it Res. Math.} {\bf 31} (1997), 245--265.

\bibitem{Wei} K. Weierstrass, \"{U}ber continuerliche funktionen einer reellen arguments die fur keinen werth des letzeren einen bestimmten differentialquotienten bestizen, {\it Mathematische Werk}, II (1872), 71-74.  

\bibitem{Wiener} H. Wiener, Geometrische und analytische untersuchung der Weierstrass'schen function, {\it Journal f\"{u}r Mathematik}, {\bf 90} (1881), 221--252.

  
\end{thebibliography}
\end{document}